\newcommand{\Z}{\mathbb{Z}}
\newcommand{\calA}{\mathcal{A}}
\newcommand{\calL}{\mathcal{L}}
\newcommand{\N}{\mathbb{N}}
\newcommand{\lpq}[2]{L_{#1 \to #2}}
\newtheorem{thm}{Theorem}
\newtheorem{exam}{Example}
\newtheorem{lem}[thm]{Lemma}
\newtheorem{fact}[thm]{Fact}
		\numberwithin{thm}{section}
	\newtheorem{cor}[thm]{Corollary}
	\newtheorem*{thm*}{Theorem}
	\newtheorem*{lemma*}{Lemma}
	\newtheorem*{prop*}{Proposition}
	\newtheorem*{cor*}{Corollary}
	\newtheorem*{conj*}{Conjecture}
\theoremstyle{definition}
	\newtheorem*{example*}{Example}
\newtheorem{ques}[thm]{Question}
\numberwithin{thm}{section}
\theoremstyle{definition}
\newtheorem{defn}[thm]{Definition}
\newcommand{\defemph}{\textbf}
\title[A Dichotomy for $k$-automatic expansions of Presburger Arithmetic]{A Dichotomy for $k$-automatic expansions of Presburger Arithmetic \thanks{The first-named author was supported by NSERC grant RGPIN-2016-03632.  The second-named author was supported by the National Science Foundation under award No. DMS -2303368.}}
\author{Jason Bell}
\address{Department of Pure Mathematics\\ University of Waterloo\\
Waterloo, Ontario, Canada N2L 3G1}
\email{jpbell@uwaterloo.ca}
\author{Alexi Block Gorman}
\address{Universiteit van Amsterdam\\
Institute for Logic, Language and Computation\\
Science Park 900\\ 
1098 XH Amsterdam\\
Netherlands} \email{a.t.blockgorman@uva.nl}
\author{Chris Schulz}
\address{Department of Pure Mathematics\\ University of Waterloo\\
Waterloo, Ontario, Canada N2L 3G1}
\email{chris.schulz@uwaterloo.ca}
\keywords{Presburger, Finite automata, Model theory, Definability, Decidability}
\begin{document}

\begin{abstract}
Let $k\ge 2$ and let $X$ be a subset of the natural numbers that is $k$-automatic and not eventually periodic. 
We show that the following dichotomy holds: either all $k$-automatic subsets are definable in the expansion of Presburger arithmetic in which we adjoin the predicate $X$, or $(\mathbb{N},+,X)$ has the same definable sets as $(\mathbb{N},+,k^{\mathbb{N}})$.
\end{abstract}


\maketitle

\section{Introduction}

A set $X \subseteq \mathbb{N}$ is $k$-{\bf automatic} if the language of base-$k$ expansions of elements of $X$, viewed as strings over the alphabet $\{0, 1, \ldots, k-1\}$, is accepted by some finite-state automaton. These sets, and their definability properties, have played an important role in both model theory and in theoretical computer science. A key structural insight, due to B\"uchi in \cite{Buc62} and Bruy\`ere in \cite{B85}, is that the $k$-automatic subsets of $\mathbb{N}^n$ are exactly those definable in the structure $(\mathbb{N}, +, V_k)$, where the unary function $V_k:\mathbb{N}\to \mathbb{N}$ returns the largest power of $k$ that divides $n$.

This correspondence has far-reaching consequences: it shows, for example, that $(\mathbb{N}, +, V_k)$ admits a decidable theory. The broader project of understanding expansions of Presburger arithmetic by automatic sets has a long and rich history. 
One such result is the Cobham-Semenov theorem \cite{C69} which asserts that if a set $X \subseteq \mathbb{N}^n$ is definable both in $(\mathbb{N}, +, V_k)$ and in $(\mathbb{N}, +, V_\ell)$, for multiplicatively independent bases $k$ and $\ell$, then $X$ is already definable in Presburger arithmetic; that is, it is definable in $(\mathbb{N}, +)$.
Other such results include the fact that the structure $(\N,+,V_k,V_{\ell})$ defines multiplication when $k$ and $\ell$ are multiplicatively independent, due to Villemaire in \cite{V92},
which was then strengthened by B\`es in \cite{Bes97}, in which he proves that $(\N,+,V_k,\ell^{\N})$ defines multiplication when $k$ and $\ell$ are multiplicatively independent.

There exists a natural hierarchy of structures expanding Presburger arithmetic by non-Presburger-definable $k$-automatic sets. At the top lies $(\mathbb{N}, +, V_k)$, which completely captures $k$-automaticity and exhibits some degree of model-theoretic complexity (e.g., it has TP$_2$). At the bottom lies $(\mathbb{N}, +, k^{\mathbb{N}})$, where $k^{\mathbb{N}} = \{ k^n : n \in \mathbb{N} \}$, a reduct that has NIP and in which the definable sets are comparatively tame (see Semenov \cite{Sem80}, and Lambotte--Point \cite[Theorem~2.32]{LP20}).

In this paper, we show that for expansions of $(\mathbb{N}, +)$ by a single unary $k$-automatic set $X \subseteq \mathbb{N}$ that is not eventually periodic, the hierarchy collapses. Specifically, we prove the following dichotomy holds.
\begin{thm} \label{thm:main}
Let $X\subseteq \mathbb{N}$ be a $k$-automatic set that is not eventually periodic. Then either $(\mathbb{N},+,X)$ defines all $k$-automatic sets, or $X$ is definable in $(\mathbb{N},+,k^{\mathbb{N}})$. 
\end{thm}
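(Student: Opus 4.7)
I will argue the contrapositive: if $(\mathbb{N},+,X)$ fails to define every $k$-automatic set, then $X$ is interdefinable with $k^{\mathbb{N}}$. By the B\"uchi--Bruy\`ere theorem, this hypothesis is equivalent to saying that $V_k$ is not definable in $(\mathbb{N},+,X)$. The argument then splits into two claims: (I) $k^{\mathbb{N}}$ is always definable in $(\mathbb{N},+,X)$ whenever $X$ is $k$-automatic and not eventually periodic; and (II) if, in addition, $V_k$ is not definable in $(\mathbb{N},+,X)$, then $X$ itself is definable in $(\mathbb{N},+,k^{\mathbb{N}})$. Together these two claims yield interdefinability of $X$ with $k^{\mathbb{N}}$.

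For Claim~(I), I plan to work with the minimal DFA $\calA$ accepting the base-$k$ expansions of $X$. Non-eventual periodicity of $X$ forces $\calA$ to contain a cycle whose effect on membership in $X$ cannot be captured by a Presburger formula, which in turn produces ``scale markers'' in $X$ at positions controlled by powers of $k$. Concretely, the goal is to exhibit a formula $\varphi(y)$ in the language of $(+,X)$ whose solution set agrees with $k^{\mathbb{N}}$ up to a finite symmetric difference; in the presence of addition this is enough to define $k^{\mathbb{N}}$ outright.

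For Claim~(II), I will invoke the structural description of the $k$-automatic subsets of $\mathbb{N}$ definable in $(\mathbb{N},+,k^{\mathbb{N}})$ due to Semenov~\cite{Sem80} and refined by Lambotte--Point (\cite[Theorem~2.32]{LP20}): these are precisely those obtained from Presburger sets by finite boolean operations and constraints involving only bounded sums of elements of $k^{\mathbb{N}}$. If the $k$-automatic set $X$ fails to admit such a description, then the minimal DFA of $X$ contains a ``branching'' state whose local behavior depends on the $k$-adic position in an unbounded way. Using this state, together with the predicate $k^{\mathbb{N}}$ provided by Claim~(I), I plan to construct a formula defining $V_k$ by measuring the gaps between successive scales at which the offending branching occurs.

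The main obstacle will be Claim~(II), because it requires converting a \emph{negative} structural statement about $X$ into an explicit \emph{positive} definition of $V_k$. My intended approach is to refine the standard $k$-kernel analysis: if the $k$-kernel of the indicator sequence $\mathbbm{1}_X$ contains a member not obtainable by boolean combinations from $\mathbbm{1}_{k^{\mathbb{N}}}$ and Presburger data, then, given access to both $X$ and $+$, one can reverse-engineer this kernel member to distinguish $k$-adic valuations and thereby define $V_k$. This is in the same spirit as, but more delicate than, B\`es's argument that $(\mathbb{N},+,V_k,\ell^{\mathbb{N}})$ defines multiplication when $k$ and $\ell$ are multiplicatively independent.
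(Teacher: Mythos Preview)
Your high-level contrapositive structure matches the paper exactly: one shows $k^{\mathbb{N}}$ is always definable from $X$, and then that failure to define $V_k$ forces $X$ to be definable in $(\mathbb{N},+,k^{\mathbb{N}})$. For Claim~(I) you do not need any new argument: this is precisely B\`es's theorem \cite[Theorem~3.1]{Bes97}, which the paper simply cites (Fact~\ref{bes_define_kn}). Your sketch of ``scale markers'' is essentially reinventing that result.

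The substantive gap is in Claim~(II). Your plan---find a ``branching state'' witnessing non-Semenov-definability and use it, together with $k$-kernel analysis, to recover $V_k$---names the difficulty correctly but supplies no mechanism. The paper's route is quite different and much more specific. First it reduces to \emph{cycle languages} $L_{q\to q}$: Lemma~\ref{PieceA} shows every $[L_{q\to q}]_k$ is definable from $X$, so if any cycle language defines $V_k$ then so does $X$. The hard work (all of \S\ref{sec:cyc}) is a dichotomy for cycle languages themselves, carried out by constructing an explicit $(\mathbb{N},+,X)$-definable map $F:X\to k^{\mathbb{N}}$ that approximates the ``last-run-of-$a$'s'' valuation $V_{k,a}$, proving $F/V_{k,a}$ is bounded above and below (the upper bound uses a Mahler-series periodicity lemma, Lemma~\ref{lem:per1}), and then invoking the additive-combinatorics result of Bell--Hare--Shallit \cite{BHS18} to extend from $X$ to all of $\mathbb{N}$. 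The general case (\S\ref{sec:gen}) then decomposes the accepted language along strongly connected components of the automaton: non-leaf components give sparse path languages, and the leaf component's path language is $(\mathbb{N},+,k^{\mathbb{N}})$-definable by the cycle-language dichotomy plus Lemma~\ref{lem:lastone}; Semenov's characterization (Fact~\ref{semenov}) then closes the argument.

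None of these ingredients---the cycle-language reduction, the definable approximant $F$, the Mahler/periodicity bound, or the use of \cite{BHS18}---appears in your proposal. A direct ``$k$-kernel'' or ``branching state'' argument might be possible in principle, but as written you have not indicated how a single non-Semenov kernel element yields a \emph{uniform} definition of $V_k(n)$ for all $n$; this uniformity is exactly what the paper's $F$-construction and the additive basis theorem are for.
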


The main ingredients for proving this theorem are characterizations of sparse and non-sparse languages, as well as the definable sets of $(\N,+,k^{\N})$ and $(\N,+)$ respectively.
In particular, we use that sets definable from sparse ones are precisely those definable in $(\N,+,k^{\N})$. 
On the other hand, sets not definable from a sparse one necessarily define an additive basis\footnote{A set $X$ such that for some $n \in \N$, every number can be written as a sum of at most $n$ elements of $X$.} for the natural numbers that is not eventually periodic and from such a set we show one can define all automatic sets.
We also make key use of the fact that to check whether a $k$-automatic set $X$ is Presburger-definable (i.e., definable in $(\N,+)$) it suffices to check only that it has period $p\geq 1$ on an interval that is bounded in terms of $p$ and $X$.

The outline of this paper is as follows.  In \S \ref{sec:pre} we provide background on automata and Presburger definability, in addition to other relevant concepts required to prove Theorem \ref{thm:main}.  In \S \ref{sec:cyc} we prove Theorem \ref{thm:main} in a key special case: namely, when $X$ is the set of natural numbers whose base-$k$ expansions are accepted by a cycle language; that is, a regular language accepted by a deterministic finite-state automaton with one final state that is equal to its initial state (see \S \ref{sec:pre} for definitions). In \S \ref{sec:gen} we prove the general case. Finally in \S \ref{sec:fur}, we make some observations that follow from our main result and a direction for further inquiry.

\section{Preliminaries}
\label{sec:pre}

\subsection{Presburger arithmetic}

Presburger arithmetic refers to the first-order theory of natural numbers with addition. It was introduced by Presburger in \cite{P29}, who provided a recursive axiomatization and proved that the theory is both consistent and complete.

\begin{defn}
    \defemph{Presburger arithmetic} is the first-order theory of the structure $(\mathbb{N}, +)$.
\end{defn}

Throughout this paper, we consider first-order formulas in Presburger arithmetic only in terms of the sets they define in the standard model. Non-standard models will not concern us.

There is a number-theoretic characterization of the sets definable in Presburger arithmetic, due to Ginsburg and Spanier \cite{GS66S}.

\begin{defn}
    A subset $X$ of $\mathbb{N}^d$ is said to be \defemph{linear} if it is of the form:
    \[ X = \left\{x + \sum_{i=1}^n k_i y_i : k_1, \dots, k_n \in \mathbb{N} \right\} \]
    where $x, y_1, \dots, y_n$ are fixed vectors in $\mathbb{N}^d$.

    A subset of $\mathbb{N}^d$ is \defemph{semilinear} if it is a finite union of linear sets.
\end{defn}

\begin{thm}[\cite{GS66S}]
    A subset of $\mathbb{N}^d$ is definable in Presburger arithmetic if and only if it is semilinear.
\end{thm}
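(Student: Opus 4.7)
The plan is to prove the two directions separately. The forward direction — that every semilinear set is Presburger-definable — is routine: each linear set $L = \{x + \sum_{i=1}^n k_i y_i : k_i \in \mathbb{N}\} \subseteq \mathbb{N}^d$ is defined componentwise by the formula $\exists k_1\cdots\exists k_n\, \bigwedge_{j=1}^d \bigl(z_j = x_j + \sum_{i=1}^n y_{i,j}\cdot k_i\bigr)$. Here each $y_{i,j}\cdot k_i$ is a legitimate Presburger term, since $y_{i,j}$ is a fixed natural number, so $y_{i,j}\cdot k_i$ abbreviates $\underbrace{k_i + \cdots + k_i}_{y_{i,j}\text{ times}}$. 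A finite union of linear sets is then defined by the disjunction of the corresponding formulas, so semilinearity implies definability.

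The converse direction is the substantive part. My approach is to invoke Presburger's quantifier elimination in the expanded signature $\{0,1,+,<\}\cup\{\equiv_m : m\ge 1\}$, which guarantees that every Presburger-definable subset of $\mathbb{N}^d$ can be written as a Boolean combination of atomic sets defined by linear equations $\sum a_i x_i=b$, linear inequalities $\sum a_i x_i<b$, and congruences $\sum a_i x_i\equiv b\pmod{m}$. The task then reduces to two sub-claims: (a) each such atomic set is semilinear, and (b) the class of semilinear sets is closed under finite unions, intersections, and complements.

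Sub-claim (a) is handled directly. An arithmetic progression modulo $m$ is a linear set with a single period vector. A half-space $\{x\in\mathbb{N}^d : \sum a_i x_i<b\}$ has its lattice points parametrized by a finite generating set (via Hilbert bases or Gordan's lemma), and the solution set in $\mathbb{N}^d$ of a linear Diophantine equation is finitely generated for the same reason. Closure under union in (b) is immediate; by De Morgan it suffices to handle complement, and this is where I expect the main technical obstacle to lie. I would attack it by induction on the ambient dimension $d$ combined with a lattice decomposition: given a linear set $L\subseteq\mathbb{N}^d$ with generators $y_1,\ldots,y_n$, let $r$ be the rank of the $\mathbb{Z}$-span of the $y_i$. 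If $r<d$, then $L$ lies in a proper affine subspace, and its complement decomposes into a lower-dimensional piece (handled inductively) together with the union of the remaining residue classes of that subspace in $\mathbb{N}^d$. If $r=d$, one partitions $\mathbb{N}^d$ into finitely many cosets of the sublattice generated by $y_1,\ldots,y_n$ and, within each coset, separates a bounded exceptional region from a tail that is itself a translate of the submonoid generated by the $y_i$; both pieces are readily seen to be semilinear. This lattice-theoretic bookkeeping at rank $r=d$ is the heart of the argument, and it propagates the induction in a controlled way.
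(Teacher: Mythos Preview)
The paper does not prove this theorem itself; it is quoted from Ginsburg--Spanier as background, so there is no in-paper argument to compare against and your proposal must stand on its own.

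Your strategy via quantifier elimination is the standard one, but step (b) has a genuine circularity. You reduce Boolean closure to closure under complement via De Morgan, and then argue only for the complement of a single \emph{linear} set $L$. However, a semilinear set is a finite union $\bigcup_i L_i$ of linear sets, and its complement is $\bigcap_i L_i^c$; even granting that each $L_i^c$ is semilinear, you still need closure of semilinear sets under finite intersection---precisely the property you were planning to derive from complement. The missing ingredient is the independent lemma that the intersection of two linear sets is semilinear, which is one of the main technical results in Ginsburg--Spanier's paper and requires its own Dickson's-lemma-style argument. With that lemma in hand, intersection of semilinear sets follows by distributivity over union, and then your complement-of-a-linear-set sketch together with $\bigl(\bigcup_i L_i\bigr)^c=\bigcap_i L_i^c$ would close the proof legitimately. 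Two smaller issues: in your rank-$d$ case the ``bounded exceptional region plus translate of the submonoid'' picture is only straightforward when the $y_i$ are $\mathbb{Z}$-linearly independent, and in (a) Gordan's lemma strictly speaking concerns cones---for an affine half-space intersected with $\mathbb{N}^d$ you want the structure of integer points in a rational polyhedron.
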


In this paper, we focus on the one-dimensional case. In this setting, semilinear sets correspond precisely to eventually periodic sets:

\begin{defn}
    A subset $X$ of $\mathbb{N}$ is \defemph{eventually periodic} if there exist $N, p \in \mathbb{N}$ such that $n \in X$ if and only if $n + p \in X$ for all $n \geq N$.
\end{defn}
Thus we have the following result, which also follows from quantifier elimination for Presburger arithmetic (see Theorem 32F in \cite{E01}).
\begin{cor}
    Let $X \subseteq \mathbb{N}$. Then $X$ is Presburger-definable if and only if $X$ is eventually periodic.
\end{cor}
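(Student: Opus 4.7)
The plan is to deduce this corollary immediately from the Ginsburg--Spanier theorem by checking that, in dimension one, semilinearity and eventual periodicity describe the same subsets of $\mathbb{N}$.

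For the forward direction I would take a linear set $L = \{x + \sum_{i=1}^n k_i y_i : k_i \in \mathbb{N}\} \subseteq \mathbb{N}$ and show it is eventually periodic. Discarding zero generators (if every $y_i = 0$, then $L = \{x\}$ is trivially eventually periodic), let $d = \gcd(y_1, \dots, y_n)$. The standard fact about numerical semigroups generated by positive integers with gcd $d$ says that the set $S = \{\sum k_i y_i : k_i \in \mathbb{N}\}$ contains every sufficiently large multiple of $d$; this is the one non-routine input and follows from B\'ezout together with the observation that any sufficiently large multiple of $d$ can be adjusted by adding a suitable nonnegative combination of the $y_i$. Translating by $x$ then shows $L$ coincides beyond some threshold with the single arithmetic progression $x + dM + d\mathbb{N}$, hence is eventually periodic with period dividing $d$. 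A finite union of eventually periodic sets is eventually periodic (take the maximum of the thresholds and the least common multiple of the periods), so every semilinear $X \subseteq \mathbb{N}$ is eventually periodic.

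For the converse, given $X \subseteq \mathbb{N}$ eventually periodic with threshold $N$ and period $p$, I would decompose $X$ as a disjoint union of the finite piece $X \cap [0,N)$ and the tail $X \cap [N, \infty)$. Each point $\{n\}$ of the finite piece is the linear set $\{n + \sum_{i} k_i \cdot 0\}$. The tail is the union over those residues $a \in \{N, N+1, \dots, N+p-1\}$ lying in $X$ of the arithmetic progressions $\{a + kp : k \in \mathbb{N}\}$, each of which is manifestly linear. Thus $X$ is a finite union of linear sets, hence semilinear, and by Ginsburg--Spanier is Presburger-definable.

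The main (and only) nontrivial step is the numerical-semigroup fact invoked in the first paragraph; everything else is bookkeeping.
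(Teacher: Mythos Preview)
Your proposal is correct and follows exactly the route the paper indicates: the paper states the corollary immediately after Ginsburg--Spanier and the remark that one-dimensional semilinear sets are precisely the eventually periodic ones, without writing out a proof. Your argument supplies precisely the standard details of that identification (the numerical-semigroup/Frobenius fact in one direction, and the decomposition into singletons and arithmetic progressions in the other), so there is nothing to add or correct.
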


\subsection{Finite automata}

This paper will study regular languages by way of the corresponding finite automata. As such, we begin with a definition of the non-deterministic finite automaton:

\begin{defn}
    A \defemph{non-deterministic finite automaton} (or \defemph{NFA}) is a tuple $\calA = (Q, I, \Sigma, \delta, F)$ where:

    \begin{itemize}
        \item $Q$ is a finite set of elements called \defemph{states};
        \item $I \subseteq Q$ is the \defemph{set of start states} or \defemph{initial states};
        \item $\Sigma$ is a finite \defemph{alphabet};
        \item $\delta : Q \times \Sigma \to \mathcal{P}(Q)$ is a function called the \defemph{transition function};
        \item $F \subseteq Q$ is a set of \defemph{accept states} or \defemph{final states}.
    \end{itemize}
\end{defn}
    
\begin{defn}
    An NFA $\calA = (Q, I, \Sigma, \delta, F)$ is called \defemph{deterministic}, or a \defemph{DFA}, if $I$ has exactly one element and if $\delta(q, y)$ has at most one element for all $q \in Q, y \in \Sigma$.
\end{defn}

We first recall that for an NFA $\calA = (Q, I, \Sigma, \delta, F)$, we can inductively extend the function $\delta : Q \times \Sigma \to \mathcal{P}(Q)$ to a function from $Q \times \Sigma^*$ to $\mathcal{P}(Q)$ by declaring that, for a word $v\in \Sigma^*$ and $y\in \Sigma$:

$$\delta(q, vy) = \bigcup_{q' \in \delta(q, v)} \delta(q',y)$$

If $\calA$ is deterministic, this extended $\delta$ maintains the same property that its output is always at most a singleton set.

A \defemph{run} of $\calA$ on a word $w \in \Sigma^*$ is a finite sequence $(q_0, q_1, \dots, q_{|w|})$ of states, with $q_0 \in I$ a start state, such that for $1 \leq i \leq |w|$ we have $q_i \in \delta(q_{i-1}, w_i)$. A run is \defemph{accepting} if $q_{|w|} \in F$; if any accepting run exists, we say that $\calA$ \defemph{accepts} $w$. If $\calA$ does not accept a word $w$, we say it \defemph{rejects} $w$.  We may also speak of runs \defemph{from} states other than initial states; a run from $q$ means a run in the automaton $\calA'$ that replaces $I$ with $\{q\}$.

A language $L$ is regular if and only if there exists a finite automaton $\calA$ such that $L$ is the set of words accepted by $\calA$.
It is a well-known fact due to Rabin and Scott that we may without loss of generality take $\calA$ to be deterministic:

\begin{fact}
    Let $\calA$ be any NFA. There exists a deterministic finite automaton $\calA'$ such that $\calA$ and $\calA'$ accept precisely the same words.
\end{fact}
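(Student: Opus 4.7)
The plan is to use the classical subset (powerset) construction. Given an NFA $\calA = (Q, I, \Sigma, \delta, F)$, I would define a DFA $\calA' = (Q', \{q_0'\}, \Sigma, \delta', F')$ in which the states track all possible states $\calA$ could simultaneously be in after reading a prefix. Concretely, set $Q' = \mathcal{P}(Q)$, let the unique initial state be $q_0' = I$, define the transition function on $S \subseteq Q$ and $y \in \Sigma$ by
\[ \delta'(S, y) = \bigcup_{q \in S} \delta(q, y), \]
and let $F' = \{ S \subseteq Q : S \cap F \neq \emptyset \}$. Since $\delta'$ takes a state of $\calA'$ and a letter to a single state of $\calA'$, the automaton $\calA'$ is deterministic by construction, and $Q'$ is finite because $Q$ is.

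The main step is to verify that $\calA'$ accepts exactly the same words as $\calA$. For this I would prove by induction on $|w|$ that for every $w \in \Sigma^*$, the unique state reached by $\calA'$ after reading $w$, namely $\delta'(I, w)$, coincides with the set $\bigcup_{q \in I} \delta(q, w)$ of all states $\calA$ can be in after reading $w$ from some initial state. The base case $w = \varepsilon$ is immediate, since both expressions equal $I$. For the inductive step, write $w = vy$ with $y \in \Sigma$; applying the definition of the extended $\delta'$ gives $\delta'(I, vy) = \delta'(\delta'(I, v), y)$, and the induction hypothesis together with the definitions of $\delta'$ and of the extended $\delta$ for NFAs yields
\[ \delta'(\delta'(I, v), y) = \bigcup_{q' \in \delta'(I, v)} \delta(q', y) = \bigcup_{q \in I} \delta(q, vy), \]
as desired.

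From this identity the equivalence of accepted languages follows at once: $\calA'$ accepts $w$ iff $\delta'(I, w) \in F'$, iff $\delta'(I, w) \cap F \neq \emptyset$, iff there exists $q \in I$ and an accepting run of $\calA$ on $w$ ending in $F$, iff $\calA$ accepts $w$. Since $\calA'$ is a DFA recognizing the same language, the fact is proved.

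There is no serious obstacle here; the construction and its verification are entirely standard. The only point requiring a bit of care is bookkeeping: making sure the extended transition function $\delta$ on the NFA side (acting on words) is used consistently with the definition given earlier in the paper, so that the induction lines up cleanly with the one-letter-at-a-time definition of $\delta'$.
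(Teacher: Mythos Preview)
Your proof is correct and is the standard subset construction. The paper itself does not prove this statement: it is recorded as a well-known fact without argument, so there is nothing further to compare.
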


We will also need two facts about the sizes of deterministic finite automata after transformations are applied to the corresponding languages:

\begin{lem}
\label{numstates_prefix}
    Let $L$ be a regular language whose deterministic finite automaton $\calA$ has $M$ states. Let $u$ be any word. Then the language $\{v : uv \in L\}$ has a deterministic finite automaton with $(M+1)$ states.
\end{lem}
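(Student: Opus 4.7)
The plan is to construct the desired DFA $\calA'$ by essentially shifting the start state of $\calA$ to wherever the word $u$ leads, while adding a single sink state to handle the case where the run on $u$ is undefined. Write $\calA = (Q, \{q_0\}, \Sigma, \delta, F)$ with $|Q| = M$. Extending $\delta$ to words as described in the preliminaries, $\delta(q_0, u)$ is either empty (meaning the run on $u$ gets stuck) or a singleton $\{q_u\}$. In the latter case, the DFA obtained from $\calA$ by replacing the start state with $q_u$ already recognizes $\{v : uv \in L\}$ using only $M$ states, which is within the claimed bound.

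To give a single uniform construction, I would add one fresh sink state $s \notin Q$, extend $\delta$ to $\delta'$ on $(Q \cup \{s\}) \times \Sigma$ by setting $\delta'(s, y) = \{s\}$ for every $y \in \Sigma$ and leaving the remaining values of $\delta$ unchanged, and define the new start state to be the unique element $q_u$ of $\delta(q_0, u)$ when nonempty, and $s$ otherwise. The resulting automaton
\[
\calA' = (Q \cup \{s\},\ \{q_u\},\ \Sigma,\ \delta',\ F)
\]
is deterministic (each $\delta'(q,y)$ has at most one element and there is a unique start state), and it has exactly $M+1$ states.

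For correctness, consider a word $v \in \Sigma^*$. If $q_u \in Q$, then the run of $\calA'$ on $v$ starting from $q_u$ visits only states in $Q$ (and so agrees with the run of $\calA$ on $v$ from $q_u$) unless it passes through $s$; but $s \notin F$, so acceptance in $\calA'$ is equivalent to the run of $\calA$ from $q_u$ on $v$ ending in $F$, which is equivalent to $\calA$ accepting the concatenation $uv$, i.e., to $uv \in L$. If $q_u = s$, then $\calA'$ rejects every input, matching the fact that $\delta(q_0, u) = \emptyset$ forces $\{v : uv \in L\} = \emptyset$. In either case, $\calA'$ recognizes $\{v : uv \in L\}$ using $M+1$ states.

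There is no substantive obstacle in this lemma; the only mild subtlety is the possibility that $\delta(q_0, u)$ is undefined, which is exactly what the extra state $s$ is introduced to absorb. The same construction will presumably be reused later when $u$ is allowed to range over prefixes of base-$k$ expansions, where the uniformity of the $M+1$ bound (independent of $u$) is what will matter.
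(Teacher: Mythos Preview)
Your proof is correct and follows essentially the same approach as the paper: both build the quotient DFA by pre-running $u$ and adjoining a single extra state. The only cosmetic difference is that the paper adds a fresh \emph{initial} state $q_u$ (with outgoing transitions $q_u \xrightarrow{\sigma} \delta(q_0,u\sigma)$ and with $q_u$ made final iff $u\in L$), whereas you reuse the existing state $\delta(q_0,u)$ as the new initial state and spend the extra state on a sink to absorb the case $\delta(q_0,u)=\varnothing$.
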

\begin{proof}
    Let $\calA = (Q, I, \Sigma, \delta, F)$. We define the DFA $\calA'$ on $\Sigma$ as follows. 
    First, add a new state $q_u$ to $Q$ to obtain the set of states in $\calA'$; $q_u$ will be the initial state. 
    The final states of $\calA'$ will be those in $F$; if (and only if) $u \in L$, we also add $q_u$ as a final state. 
    Finally, the transition function of $\calA'$ agrees with that of $\calA$ on its domain, and for each symbol $\sigma \in \Sigma$, we add a transition from $q_u$ to $\delta(I, u\sigma)$ on $\sigma$.

    We claim that $\calA'$ recognizes $\{v : uv \in L\}$. Let $uv \in L$. If $v$ is empty, then $u \in L$ and hence the empty word is recognized by $\calA'$. Otherwise, let $v = \sigma w$. Then $\delta(I, u\sigma w) \in F$; so $\delta(\delta(I, u\sigma), w) \in F$; so $\delta(q_u, \sigma w) \in F$, as needed. The proof when $uv \notin L$ is analogous.
\end{proof}

\begin{lem}
\label{numstates_iff}
    Let $L_1$ be a regular language whose deterministic finite automaton $\calA_1$ has $M_1$ states, and let $L_2$ be a regular language whose deterministic finite automaton $\calA_2$ has $M_2$ states. Then the language $\{w : w \in L_1 \leftrightarrow w \in L_2\}$ has a deterministic finite automaton with $M_1 \cdot M_2$ states.
\end{lem}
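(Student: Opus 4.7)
The plan is to carry out the standard product construction, tailored so that the accept set implements the biconditional rather than the usual intersection or union. Write $\calA_1 = (Q_1, \{q_1^0\}, \Sigma, \delta_1, F_1)$ and $\calA_2 = (Q_2, \{q_2^0\}, \Sigma, \delta_2, F_2)$, where $|Q_1| = M_1$ and $|Q_2| = M_2$. I would define the product DFA $\calA = (Q_1 \times Q_2, \{(q_1^0, q_2^0)\}, \Sigma, \delta, F)$, where $\delta((p, q), \sigma) = (\delta_1(p, \sigma), \delta_2(q, \sigma))$ and
\[
F = \{(p, q) \in Q_1 \times Q_2 : p \in F_1 \leftrightarrow q \in F_2\}.
\]
The new DFA has exactly $M_1 \cdot M_2$ states by construction.

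Next I would verify by a straightforward induction on $|w|$ that the extended transition function satisfies $\delta((q_1^0, q_2^0), w) = (\delta_1(q_1^0, w), \delta_2(q_2^0, w))$. This is immediate from the componentwise definition of $\delta$ and the inductive extension described earlier in the paper. Given this, a word $w$ is accepted by $\calA$ if and only if $\delta((q_1^0, q_2^0), w) \in F$, which by the definition of $F$ happens if and only if $\delta_1(q_1^0, w) \in F_1 \leftrightarrow \delta_2(q_2^0, w) \in F_2$, i.e., if and only if $w \in L_1 \leftrightarrow w \in L_2$.

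This is entirely routine; there is no real obstacle. The only minor point to mention is that $\calA$ is indeed deterministic (it has a single initial state and $\delta$ returns a singleton at each step, inherited from $\calA_1$ and $\calA_2$), and that the state count is exactly $M_1 \cdot M_2$ with no extra states needed, since the biconditional condition is encoded purely in the choice of which product states are final.
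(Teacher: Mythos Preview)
Your proposal is correct and follows essentially the same approach as the paper: the standard product construction on $Q_1 \times Q_2$ with the final states chosen to encode the biconditional, followed by the componentwise extension of the transition function to words. The paper's proof is slightly terser but otherwise identical in content.
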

\begin{proof}
    Let $\calA_1 = (Q_1, I_1, \Sigma, \delta_1, F_1)$ and $\calA_2 = (Q_2, I_2, \Sigma, \delta_2, F_2)$. We will construct a new automaton $\calA_3 = (Q_3, I_3, \Sigma, \delta_3, F_3)$. Let $Q_3 = Q_1 \times Q_2$, $I_3 = I_1 \times I_2$, and let $\delta_3((q_1, q_2), \sigma) = \delta_1(q_1, \sigma) \times \delta_2(q_2, \sigma)$. Note that this definition naturally extends to words; $\delta_3((q_1, q_2), w) = \delta_1(q_1, w) \times \delta_2(q_2, w)$.

    Let $F_3 = \{(q_1, q_2) \in Q_3 : q_1 \in F_1 \leftrightarrow q_2 \in F_2\}$. Then we note that $\calA_3$ recognizes $w$ iff the single element of $\delta_3((q_1, q_2), w)$ is in $F_3$. This is true if and only if the equivalence $\delta_1(q_1, w) \in F_1 \leftrightarrow \delta_2(q_2, w) \in F_2$ holds; but this is precisely when $w \in L_1 \leftrightarrow w \in L_2$.
\end{proof}

Call an NFA \defemph{trim} if every state can be reached by some path from the start state and for each state there exists a final state that can be reached from the state in question.
Call an NFA or DFA recognizing the language $L$ \defemph{minimized} if it has the smallest number of states possible among all automata that recognize $L$ (within the class of NFAs and DFAs respectively).
Note that for any NFA, there exists an equivalent trim automaton and an equivalent minimized automaton, and these can be made deterministic up to allowing the transition function to be a partial function.

\begin{defn}\label{def:digraph}
    The \defemph{digraph structure} of a finite automaton $\calA = (Q, I, \Sigma, \delta, F)$ is the directed graph $(Q, \Delta)$ where $p \Delta q \iff \exists \sigma \in \Sigma : q \in \delta(p, \sigma)$.
    
    We may use digraph terminology to apply properties of the digraph structure to the automaton. For example, a \defemph{strongly connected component} of $\calA$ is a maximal subset $S \subseteq Q$ such that for all $p, q \in S$ there exists a path from $p$ to $q$, or equivalently a word $w$ such that $\calA$ runs from $p$ to $q$ on $w$. We define a \defemph{leaf} of the digraph structure to be a strongly connected component with no outgoing transitions to states in a different strongly connected component.
    
\end{defn}

Within this paper, it will be very useful to examine the internal structure of an automaton in terms of which words run to which individual states. So we will define two classes of languages:

\begin{defn}
    Let $\calA = (Q, I, \Sigma, \delta, F)$ be a finite automaton and $p, q \in Q$. The \defemph{path language} $\lpq{p}{q} \subseteq \Sigma^*$ is the set of words $w$ on which there is a run from $p$ to $q$ in $\calA$. In the case that $p = q$, we call this the \defemph{cycle language}. 
\end{defn}

It is elementary that path languages are always regular. Moreover, note that $\lpq{q}{q} = \lpq{q}{q}^*$.

\begin{defn}
    An \textbf{induced subautomaton} of an automaton $\calA=(Q,I,\Sigma,\delta,F)$ corresponding to some $Q' \subseteq Q$ is the automaton $(Q', I', \Sigma, \delta|_{Q'}, F \cap Q')$ where $I'$ is the union of $Q' \cap I$ along with all elements of $Q'$ to which there is an incoming transition from outside of $Q'$.
\end{defn}

\subsection{Encoding sets of natural numbers}

Our results in this paper relate to the encoding of natural numbers via the base-$k$ expansion. We define this, and our notations related to it, here.

\begin{defn}
Given a natural number $k\ge 2$, we let $\Sigma_k=\{0,\ldots ,k-1\}$.
    A \textbf{base-$k$ expansion} of a natural number $x$ is a word $w_n w_{n-1} \dots w_0$ over the alphabet $\Sigma_k$ such that:

    $$x = \sum_{i=0}^n w_i k^i$$

    In this case, we say that $x = [w]_k$.
\end{defn}

Note that the above definition defines $[w]_k$ by treating $w$ as a \textbf{most-significant-digit-first} (MSD-first) representation. Some previous work has used the opposite \textbf{least-significant-digit-first} representation, so in order to ensure that the reader understands what our convention is in this paper, we instruct the reader to assume MSD-first representations are in use in all cases. By example: $[120]_3 = 1 \cdot 3^2 + 2 \cdot 3^1 = 15$, not $7$.

We will commonly apply this notation to sets as well. So given a language $L\subseteq \Sigma_k^*$, we let $[L]_k$ denote the set of natural numbers of the form $[w]_k$ with $w\in L$. 
We say that a set $X\subseteq \mathbb{N}$ is $k$-{\bf automatic} if there is a regular sublanguage $L$ of $\Sigma_k^*$ such that $X=[L]_k$.

We note that there is a well-known dichotomy for regular languages:
if $L$ is a regular language and if 
$|L_{\le n}|$ denotes the number of words in $L$ 
of length at most $n$, then either $|L_{\le n}|$ is  polynomially bounded in $n$, or else there is a 
constant $C>1$ such that $|L_{\le n}|\ge C^n$ 
for infinitely many $n$.
More generally, this dichotomy can be stated as follows:
\begin{thm}[Proposition 7.1, \cite{BM19}]\label{sparse_char}
    Let $L$ be a regular language. The following are equivalent:
    \begin{enumerate}
\item $f_{\calL}(n) = O(n^d)$ for some natural number d.
\item  $f_{\calL}(n) = o(C^n)$ for every C > 1.
\item  There do not exist words $u, v, a, b$ with $a, b$ non-trivial and of the same length
and $a \neq b$ such that $u\{a,b\}^{*} v \subseteq L$.
\item  Suppose $\Gamma = (Q, \Sigma, \delta, q_0, F )$ is an automaton accepting L in which all states are accessible. Then $\Gamma$ satisfies the following.\\
$(*)$ If $q$ is a state such that $\delta(q,v) \in F$ for some word $v$ then there is at
most one non-trivial word $w$ with the property that $\delta(q,w) = q$ and
$\delta (q, w') \neq q$ for every non-trivial proper prefix $w'$ of $w$;
\item  There exists an automaton accepting $L$ that satisfies $(*)$.
\item  $L$ is a finite union of languages of the form $v_1w_1^* v_2w_2^* \cdots v_mw_m^* v_{m+1}$ where
$k \geq 0$ and the $v_i$'s are possibly trivial words and the $w_i$'s are non-trivial words.
\end{enumerate}
\end{thm}

We say that a regular language $L$ is 
{\bf sparse} if $|L_{\le n}| \in O(n^d)$ for some 
natural number $d$, and we will say that a $k$-
automatic subset $X$ of $\N$ is {\bf sparse} if 
$X=[L]_k$ for some sparse regular sublanguage of 
$\Sigma_k^*$.

There is a fundamental connection between $k$-automatic sets and the definable sets in an expansion of Presburger arithmetic, which was discovered by B\"uchi \cite{Buc60} and formally proven by Bruy\`ere \cite{B85}:

\begin{fact}
    Let $V_k$ send every positive integer $x$ to the largest power of $k$ dividing $x$. A set $X \subseteq \N$ is $k$-automatic if and only if it is definable in the structure $(\N, +, V_k)$.
\end{fact}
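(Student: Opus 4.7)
The plan is to prove this classical B\"uchi--Bruy\`ere equivalence in two directions, working throughout with the encoding of $d$-ary relations on $\N$ as languages over $\Sigma_k^d$ obtained by reading the base-$k$ expansions of all coordinates in parallel, right-aligned and padded on the left with $0$'s to a common length.

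For the forward direction, $(\N,+,V_k)$-definable implies $k$-automatic, I would induct on the structure of formulas. The atomic cases are that the graph of $+$ is $k$-automatic (an automaton with finitely many states tracks the carry digit while the three coordinates are read in parallel) and that the graph of $V_k$ is $k$-automatic (an automaton requires $y$ to have the form of a single $1$ followed by $0$'s and checks that this $1$ lines up with the lowest nonzero digit of $x$). The inductive step uses the standard closure properties of regular languages: union and intersection via product automata, complement via the subset construction followed by toggling the final states, and existential quantification via projection onto the relevant coordinates, yielding an NFA that can then be determinised.

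For the backward direction, $k$-automatic implies $(\N,+,V_k)$-definable, I would first build a digit-extraction gadget. The predicate $\mathrm{pow}_k(p) \equiv (V_k(p) = p)$ identifies the positive powers of $k$, and for a power $p = k^i$ the predicate ``the $i$-th base-$k$ digit of $x$ equals $\sigma$'' is expressible as
\begin{equation*}
D_\sigma(x,p) \;\equiv\; \exists r, s \ \bigl( x = r + s \ \wedge\ \sigma p \le s < (\sigma+1)p \ \wedge\ (r=0 \vee V_k(r) \ge k \cdot p)\bigr),
\end{equation*}
using only $+$, $V_k$, and multiplication by the fixed constants $\sigma$ and $k$. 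Then, given a DFA $\calA = (Q, \{q_0\}, \Sigma_k, \delta, F)$ with $[L(\calA)]_k = X$, I would simulate $\calA$ on the base-$k$ expansion of $x$ by existentially quantifying, for each $q \in Q$, an auxiliary number $y_q$ whose base-$k$ digits are all $0$ or $1$ and such that $y_q$ has a $1$ at position $p$ iff $\calA$ is in state $q$ after reading the prefix of $x$ down to position $p$. The overall formula asserts: every $y_q$ is $\{0,1\}$-valued; at each power $p$ of $k$ not exceeding $x$ exactly one $y_q$ satisfies $D_1(y_q,p)$; $\calA$ starts in $q_0$ at the top position and ends in some state of $F$ at the bottom position; and the transition rule $D_1(y_q, kp) \wedge D_\sigma(x, kp) \Rightarrow D_1(y_{\delta(q,\sigma)}, p)$ holds for all relevant $p$.

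I expect the main obstacle to be getting the boundary conditions right and, more substantively, packaging the transition rule as a single first-order assertion: one must universally quantify over all powers $p$ with $\mathrm{pow}_k(p)$ and $p \le x$, and finitely disjoin over the pairs $(q,\sigma) \in Q \times \Sigma_k$ so that the implication chain faithfully mirrors the run of $\calA$ on the base-$k$ expansion of $x$. Once this bookkeeping is handled, the resulting formula defines precisely $X$ in $(\N,+,V_k)$, completing the equivalence.
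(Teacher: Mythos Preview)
The paper does not prove this statement; it is stated as a \textbf{Fact} with citations to B\"uchi \cite{Buc60} and Bruy\`ere et~al.\ \cite{BHMV94}, so there is no ``paper's own proof'' to compare against. Your sketch is the standard argument from those references: structural induction on formulas for the forward direction (using closure of regular languages under Boolean operations and projection), and simulation of a DFA via existentially quantified $\{0,1\}$-valued run-encodings for the backward direction. The digit-extraction formula $D_\sigma(x,p)$ is correct, and your remarks about multiplication by the fixed constants $k$ and $\sigma$ being Presburger-definable are the right way to handle that point. The only places to be careful in a full write-up are the boundary conditions you already flag (top digit, leading zeros, and the convention for $V_k(0)$), but the overall plan is sound and matches the literature the paper cites.
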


Therefore, the definable sets in $(\N, +, V_k)$ can be understood via the theory of finite automata. This result is fundamental to the study of base-$k$ 
arithmetic; for instance, it gives us an effective correspondence between formulas in the language of $(\N, +, V_k)$ and finite automata, and hence a 
decision algorithm in this language. The reducts of this structure are the subject of this paper.

We shall need one more result, proven by B\`es in \cite{Bes97}:

\begin{fact}[\cite{Bes97}, Theorem 3.1]
    \label{bes_define_kn}
    Let $X \subseteq \N$ be $k$-automatic but not definable in $(\N, +)$ (i.e. not eventually periodic). Then the structure $(\N, +, X)$ defines the set $k^\N = \{k^n : n \in \N\}$.
\end{fact}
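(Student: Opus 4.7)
The plan is to construct, from any $k$-automatic $X$ that is not eventually periodic, a first-order formula $\varphi(x)$ in the language of $(\N,+,X)$ that defines $k^{\N}$. I would start by fixing a minimal DFA $\calA$ with state set $Q$ that reads most-significant-digit-first base-$k$ expansions and accepts exactly those $w$ with $[w]_k \in X$. The hypothesis that $X$ is not eventually periodic translates, via a pumping analysis using Lemmas \ref{numstates_prefix} and \ref{numstates_iff}, into the existence of a strongly connected component of $\calA$ that is reachable from the start by arbitrarily long prefixes and from which two distinct letters lead to states whose cycle/path languages differ in a non-periodic way. This persistent non-equivalence would be my ``witness'' of non-periodicity.

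The key step is to exploit this witness to first-order define the relation ``$n$ is a power of $k$.'' The natural strategy is to use the self-similarity of $X$ under the shift $x\mapsto kx$: because $X$ is $k$-automatic, for each $x$ the sequence of $X$-memberships along the arithmetic progression $x, x+d, x+2d, \ldots$ inside a window of the form $[x,x+kd)$ exhibits only finitely many possible patterns when $d$ is a power of $k$, and these patterns are governed by the state that $\calA$ enters after processing the high-order digits of $x$. Using only $+$ and $X$, one can speak of those $d\in \N$ for which the $X$-pattern on this progression obeys exactly the self-similarity dictated by $\calA$; the non-periodicity witness from the previous step should ensure that the set of such $d$ differs from $k^{\N}$ only by a Presburger-definable correction, which can then be stripped away to recover $k^{\N}$ itself.

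The main obstacle is translating this combinatorial picture into a single first-order formula that works uniformly for every $k$-automatic, non-eventually-periodic $X$. Since multiplication is not in the language, all reasoning about digit positions must be encoded through sums and $X$-membership, and the quantifier structure has to be carefully controlled. I would expect the argument to split according to whether $X$ is sparse: in the sparse case, $X$ is already close to a Presburger combination of translates of $k^{\N}$, so the sparse gap structure of $X$ itself pinpoints $k^{\N}$ fairly directly; in the non-sparse case, I would aim to pass to a sparse auxiliary set definable from $X$ in $(\N,+,X)$ -- for instance, the set of positions of the first $X$-element after each sufficiently long $X$-free gap -- and then reduce to the sparse case. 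Unifying the two cases, and verifying that the resulting $\varphi(x)$ picks out precisely $k^{\N}$ regardless of which $X$ is fed in, is where the real difficulty of B\`es's theorem lies.
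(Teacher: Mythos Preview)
The paper does not prove this statement. It is recorded as a \emph{Fact} with an external citation to B\`es \cite{Bes97}, Theorem~3.1, and no argument is given; the paper simply invokes it as a black box wherever it needs $k^{\N}$ to be definable from a non-Presburger $k$-automatic predicate. So there is no ``paper's own proof'' to compare your proposal against.

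As for the proposal itself: what you have written is a strategic outline rather than a proof, and the crucial step---passing from the informal claim that ``the $X$-pattern on an arithmetic progression of step $d$ obeys the self-similarity dictated by $\calA$ exactly when $d$ is a power of $k$''---is asserted but not carried out. The difficulty you acknowledge at the end (encoding digit-position reasoning with only $+$ and $X$, and showing the resulting formula picks out exactly $k^{\N}$) is precisely the content of B\`es's theorem, so the outline does not yet reduce the problem to anything easier. Your sparse/non-sparse case split is reasonable in spirit, but the non-sparse reduction you sketch (``positions of the first $X$-element after each sufficiently long $X$-free gap'') need not produce a sparse set, and even when it does, you have not argued that it is non-periodic, which is what you need to re-enter the sparse case. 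If you want to turn this into an actual proof, you should consult B\`es's original argument; the present paper does not supply one.
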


\subsection{Semenov's characterization}
\label{ss:sem}

Semenov \cite{Sem80} gives a characterization of the one-dimensional sets definable from $(\N, +, k^\N)$, which will be very useful to us. We first define the following subclass of regular languages:

\begin{defn}
    Let $k$ be a natural number. A $k$-\textbf{Semenov-type} language is a sublanguage of $\{0,\ldots ,k-1\}^*$ of the form:

    $$u_1 v_1^* u_2 v_2^* \dots u_p v_p^* \Sigma_{\ell,m,c}$$

    where $p, \ell, m, c \in \N$, $u_1, \dots, u_p, v_1, \dots, v_p$ are finite words over the alphabet $[k]:=\{0,\ldots ,k-1\}$, and $\Sigma_{\ell,m,c}$ is the set of all words of length a multiple of $\ell$ that are the base-$k$ expansion (\textit{without} leading zeros) of a word congruent to $-c$ modulo $m$, where we take $\Sigma_{\ell,m,c}$ to be the empty language when $m=0$ or $\ell=0$.
    We suppose without loss of generality that $\Sigma_{\ell,m,c}$ does not include leading zeros, increasing $p$ if necessary so that any string of zeros preceding an element of $\Sigma_{\ell,m,c}$ is absorbed into $v_p^*$.
\end{defn}

Theorem \ref{sparse_char} allows us to observe the following fact.

\begin{fact}[\cite{Sem80}]\label{semenov}
    A one-dimensional set $X \subseteq \N$ is definable in $(\N, +, k^\N)$ if and only if $X = [L]_k$, where $L$ is a finite union of $k$-Semenov-type languages.  
    In particular, if $U$ and $V$ are respectively a sparse regular sublanguage of $\{0,\ldots ,k-1\}^*$ and a sublanguage of $\{0,\ldots ,k-1\}^*$ consisting of base-$k$ expansions (without leading zeros) of a set definable in $(\N,+,k^\N)$, then $[UV]_k$ is definable in $(\N, +, k^\N)$.
\end{fact}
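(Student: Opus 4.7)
The plan is to prove the two directions of the biconditional separately and then deduce the ``in particular'' clause as a quick corollary. The backward direction is the easier one: given a $k$-Semenov-type language $L = u_1 v_1^* u_2 v_2^* \cdots u_p v_p^* \Sigma_{\ell,m,c}$, I would write out the value of a typical accepted word $u_1 v_1^{n_1} \cdots u_p v_p^{n_p} w$ explicitly as a sum of the constants $[u_i]_k$ and $[w]_k$, each multiplied by a power of $k$ whose exponent is a $\N$-linear combination of the lengths $n_i |v_i|$ and $|w|$, together with the values $[v_i^{n_i}]_k = [v_i]_k \cdot (k^{n_i |v_i|} - 1)/(k^{|v_i|} - 1)$. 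Since the powers of $k^{|v_i|}$ are exactly those elements of $k^{\N}$ that are congruent to $1$ modulo $k^{|v_i|} - 1$, and since the condition $w \in \Sigma_{\ell,m,c}$ is a congruence constraint on $[w]_k$ together with a length congruence that pins down $k^{|w|}$ up to a fixed coset in $k^{\N}$, every ingredient in the description of $[L]_k$ is definable in $(\N,+,k^{\N})$. Taking finite unions preserves definability, giving the backward direction.

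The forward direction is where the substance of the theorem lies. Following Semenov, I would establish a quantifier elimination for $(\N, +, k^{\N})$ in a suitable enrichment of the language that includes congruence predicates modulo $k^\ell - 1$ and predicates for the cosets $k^{j\N + i}$ of $k^{\N}$. A one-dimensional quantifier-free definable set is then a Boolean combination of atomic formulas, each involving a Presburger-linear expression in $x$ and finitely many power-of-$k$ parameters; by stratifying according to the relative ordering and separations of these parameters, one recovers the block structure of a Semenov-type language. Large gaps between consecutive exponents of $k$ produce the $v_i^*$ factors (as strings of zeros, or of other repeated patterns after coordinate change), short fixed blocks near each power produce the $u_i$ factors, and the residual low-order behaviour produces the $\Sigma_{\ell,m,c}$ tail. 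The hard part of the whole argument is precisely this step: one has to track how a Presburger-linear combination of varying powers of $k$ expands in base $k$, verify that ``collision'' configurations of the exponents contribute only a finite union of sets of the same Semenov form, and handle the congruence constraints uniformly across all configurations.

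For the ``in particular'' clause, I would invoke the classical characterization of sparse regular languages as finite unions of expressions of the form $u_0 v_1^* u_1 v_2^* \cdots v_q^* u_q$. Concatenating such an expression with a base-$k$ expansion of a set definable in $(\N,+,k^{\N})$, which by the forward direction is a finite union of $k$-Semenov-type languages, yields a finite union of expressions already in Semenov shape once any zeros appearing at the junction are absorbed into the adjacent $v_i^*$ factor (exactly the absorption built into the definition). Applying the backward direction to this finite union then shows that $[UV]_k$ is definable in $(\N, +, k^{\N})$, as claimed.
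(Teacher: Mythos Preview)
The paper does not supply a proof of this statement: it is recorded as a \texttt{fact} with a citation to Semenov~\cite{Sem80}, and the ``in particular'' clause is asserted as a direct consequence without further argument. So there is no proof in the paper to compare your proposal against.

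That said, your outline is a reasonable account of how one would establish the result. The backward direction is essentially as you describe. For the forward direction you are right that the content lies in Semenov's quantifier elimination, and your sketch of how the block structure emerges from configurations of power-of-$k$ parameters is the standard picture. One small gap in your treatment of the ``in particular'' clause: the forward direction tells you that $[V]_k = [L']_k$ for some finite union $L'$ of Semenov-type languages, not that $V$ itself equals $L'$. Since $V$ is specifically the language of expansions \emph{without} leading zeros, replacing $V$ by $L'$ inside the concatenation $UV$ can change $[UV]_k$ (leading zeros in $L'$ shift the contribution of $U$). You need an extra sentence arguing either that the no-leading-zero language of a $(\N,+,k^{\N})$-definable set is itself a finite union of Semenov-type languages (strip leading zeros from each piece of $L'$, splitting into cases as needed), or alternatively bypass the issue by defining $[UV]_k$ directly: for $v \in V$ with $[v]_k \ge 1$ the quantity $k^{|v|}$ is the unique element of $k^{\N}$ in $([v]_k, k[v]_k]$, so $[UV]_k$ is definable once you parametrize $U$ by the exponents in its sparse decomposition.
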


\section{Proof of the Dichotomy for cycle languages}\label{sec:cyc}

In this section, we'll prove the following result.
    \begin{thm}\label{thm:reduction1}
        Let $\mathcal{A}=(Q,\{q_0\},\Sigma_k, \delta,W)$ be a DFA which reads strings left-to-right. If $p\in Q$ and 
        $X=\{[w]_k \colon w\in L_{p\to p}\}$, then either $X$ is definable in  $(\mathbb{N},+,k^{\N})$ or $V_k$ is definable in $(\N,+,X)$.
    \end{thm}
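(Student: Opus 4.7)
My plan is to analyze the cycle language $\lpq{p}{p}$ and split into cases. If $X$ is eventually periodic (in particular if it is finite), it is trivially Presburger-definable, hence definable in $(\N, +, k^\N)$. Otherwise, Fact \ref{bes_define_kn} ensures that $k^\N$ is already definable in $(\N, +, X)$, and I further split on whether $\lpq{p}{p}$ is sparse.

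In the sparse case, I would invoke the standard structure theorem decomposing a sparse regular language as a finite union of bounded languages of the form $x_0 y_1^* x_1 \cdots y_r^* x_r$. Each such bounded piece fits into the $k$-Semenov-type form of Fact \ref{semenov} (absorbing the trailing $x_r$ into a trivial $\Sigma_{\ell, m, c}$), so Fact \ref{semenov} yields that $X = [\lpq{p}{p}]_k$ is definable in $(\N, +, k^\N)$, completing that branch.

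If $\lpq{p}{p}$ is not sparse, its exponential growth produces distinct words $u, v \in \lpq{p}{p}$ with $|u| = |v| = N$, and since $\lpq{p}{p}$ is closed under concatenation, all of $\{u, v\}^*$ is contained in $\lpq{p}{p}$. Writing $a = [u]_k$, $b = [v]_k$, and $M = k^N$, the base-$k$ values of these words yield the set
\[
Y = \left\{ \sum_{i=0}^{m-1} c_i M^i : m \ge 0,\ c_i \in \{a, b\} \right\} \subseteq X,
\]
namely the natural numbers whose base-$M$ expansion has every digit in $\{a, b\}$. My plan is then to define $Y$ from $X$ by expressing ``$n$ admits an additive decomposition into terms of the form $c \cdot M^i$ with $c \in \{a, b\}$'' using $+$, the predicate $X$, and $k^\N$. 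Once $Y$ is definable, the difference $b - a$ together with the powers $M^j \in k^\N$ lets me read off individual base-$M$ digits of arbitrary integers, and thereby define $V_{k^N}$, which is interdefinable with $V_k$ over $(\N, +)$ since $k$ and $k^N$ generate the same class of automatic sets.

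The main obstacle I anticipate is the step of isolating $Y$ inside $X$. The cycle language typically contains many words not in $\{u, v\}^*$, so $X$ contains elements outside $Y$, and cutting $Y$ out requires a first-order criterion expressible only with $+$, $X$, and $k^\N$. This will need a careful analysis of how the DFA's transitions near $p$ constrain which integer decompositions are realized by elements of $X$, and how those constraints can be phrased additively. Handling this step cleanly, and then making sure the ensuing digit-reading step and the definition of $V_{k^N}$ go through uniformly, is the principal technical difficulty of the argument.
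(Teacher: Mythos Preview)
Your case split (eventually periodic / sparse / non-sparse) matches the paper's, and your treatment of the first two cases is essentially correct. The gap is entirely in the non-sparse branch, and it is a real one.

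You propose to carve the subset $Y=[\{u,v\}^*]_k$ out of $X$ using a first-order formula in $(\N,+,X,k^{\N})$, and you flag this as ``the main obstacle.'' It is more than an obstacle: nothing in your outline suggests a mechanism for doing it, and there is no reason to expect one exists in general. A $k$-automatic subset of a $k$-automatic set is not, a priori, definable from the larger set together with $k^{\N}$; that is essentially what the whole theorem is about. The fact that $\{u,v\}^*\subseteq L_{p\to p}$ only tells you $Y\subseteq X$, and $X$ may contain vastly more, with no additive pattern separating $Y$ from $X\setminus Y$. Your remark that one should analyze ``how the DFA's transitions near $p$ constrain which integer decompositions are realized'' is not a plan; it is a restatement of the problem. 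Moreover, even granting $Y$, the claim that one can then ``read off individual base-$M$ digits of arbitrary integers'' is not justified: elements of $Y$ have all base-$M$ digits in $\{a,b\}$, so $V_M$ restricted to $Y$ is identically $1$, and passing from digit information on $Y$ to $V_M$ on all of $\N$ is itself a nontrivial step (in fact, one that again needs an additive-basis argument).

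The paper does something entirely different in this case, and the difference is not cosmetic. After replacing $k$ by a power so that some letter $a$ satisfies $\delta(p,a)=\{p\}$ (and $0$, $k-1$ become idempotent), the paper builds, directly and definably, a function $F\colon X\to k^{\N}$ that approximates $V_{k,a}$, the map recording the length of the trailing block of $a$'s. The definition of $F$ is an explicit first-order formula (equations~(3)--(4)) exploiting that appending or stripping $a$'s preserves membership in $L_{p\to p}$; a pumping argument shows a finite fragment of the definition already stabilizes. The hard part is proving $F(n)/V_{k,a}(n)$ is bounded \emph{above}, which uses a Mahler-series periodicity lemma (Lemma~\ref{lem:per1}) together with a Bolzano--Weierstrass compactness argument. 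Only after $F$ is shown to be a bounded-ratio proxy for $V_{k,a}$ does the paper invoke the Bell--Hare--Shallit additive-basis theorem to push this to $V_k$ on all of $\N$. None of this passes through isolating a two-letter sub-cycle language; the distinguished letter $a$ and the function $F$ are the engine of the proof, and your proposal contains no analogue of them.
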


We give an overview of how we shall prove Theorem \ref{thm:reduction1}.  
We show that if $X$ is a set of natural numbers that is not Presburger definable and that corresponds to a cycle language for an automaton with input alphabet $\Sigma_k$ for some $k$, then (after replacing $k$ by a power if necessary) we may assume 
that there is a distinguished letter $a\in \{0,\ldots ,k-1\}$ such that we can define a function $F: X\to k^{\mathbb{N}}$, which is a very close definable approximation to the function $V_{k,a}|_X$, where $V_{k,a}$ is the map that takes a natural number $n$ as input and outputs $k^i$, where $i$ is the unique nonnegative integer with the property that the base-$k$ expansion of $n$ ends with exactly $i$ copies of the digit $a$.

Having defined this map $F$, we show that we can extend $F$ to a definable approximation of the function $V_{k,a}$ on $X_{\le P}$, where $P$ is a natural number and $X_{\le P}$ is the set of all numbers that are sums of at most $P$ elements of $X$.  Now we use a theorem of the first-author, Hare, and Shallit \cite{BHS18} to show that if $X$ is not sparse, then we can define an approximation of $V_{k,a}$ on all of $\N$.  From here, we can show one can define $V_k$ and so we deduce that one can define all $k$-automatic sets from $X$.
    
    To prove Theorem \ref{thm:reduction1} we may assume that $X$ is the set of base-$k$ expansions of a non-empty cycle language, and hence there is some word $w$ such that $\delta(p,w)=\{p\}$ for a fixed state $p \in Q$.  We henceforth let 
    \begin{equation}
L := L_{p\to p} =\{w\colon \delta(p,w)=\{p\}\},
\end{equation}
    and so $X=[L]_k$.

   Since $L$ is non-empty, there is some non-empty word $w\in\Sigma_k^*$ such that $\delta(p,w)=\{p\}$. 
    Possibly by replacing $k$ by a power, we may assume without loss of generality that $w$ is a letter $a\in \{0,1,\ldots ,k-1\}$. 
    Furthermore, the pigeonhole principle gives that there is some positive integer $i$ such that $\delta(q,0^i)=\delta(q,0^{2i})$ and $\delta(q, (k-1)^i)=\delta(q,(k-1)^{2i})$ for all $q\in Q$.
    Thus we may replace $k$ by a power and henceforth assume the following hold:
    \begin{enumerate}
        \item[(i)] There is a distinguished letter
        $a\in \{0,\ldots ,k-1\}$ with the property that
    \begin{equation} \delta(p,a)=\{p\}.
    \end{equation}
\item[(ii)] We have $\delta(q,00)=\delta(q,0)$ for all $q\in Q$.
\item[(iii)] We have $\delta(q,(k-1)(k-1))=\delta(q,k-1)$ for all $q\in Q$.
\item[(iv)] We also assume that $\calA$ is minimal.
    \end{enumerate}
We shall say that a word $w \in \Sigma^*$ is {\bf idempotent} (with respect to the automaton $\mathcal{A}$) if for all states $q \in Q$, we have $\delta(q, ww) = \delta(q, w)$. That is, applying the transition corresponding to $w$ twice has the same effect as applying it once. In particular, in our setting, we assume that the words $0$ and $k-1$ are idempotent in this sense.

    We now let $f$ denote the characteristic function of the set $X$.  
    Then $f:\mathbb{N}\to \{0,1\}$ is $k$-automatic and hence there are finitely many maps $f=f_1,\ldots , f_s$ such that for each $c\ge 0$ and each $j\in \{0,\ldots, k^c-1\}$ we have
    $f(k^cn+j)=f_i(n)$ for some $i\in \{1,\ldots ,s\}$ which depends on $c$ and $j$, by \cite[Theorem 6.6.2]{AS03}.
    We call these maps $f_1,\ldots ,f_s$, the {\bf kernel} of the map $f$.  
    We may assume without loss of generality that $X$ is neither sparse nor eventually periodic.  
    In particular, since $X$ is not Presburger definable, $(\mathbb{N},+,X)$ defines the set $k^{\N}$ by Fact \ref{bes_define_kn}.  
    In addition, there exists some natural number $b$ such that each $f_i$ can be realized as $f(k^r n+j)$ for some $r,j$ with $r\le b$. 

    For the remainder of the proof, we divide the proof into three cases:
    \begin{enumerate}
        \item[(I):] $\delta(p,0)=\{p\}$;
        \item[(II:)] $\delta(p,0)=\varnothing$ or $\delta(p,0)=\{p'\}$, where $p'$ is in a different strongly connected component from $p$;
        \item[(III):] $\delta(p,0)=\{p'\}$ where $p'$ and $p$ are in the same strongly connected component.
    \end{enumerate}
  We note that in Case (I) we can take $a=0$ and this simplifies the proof of this case somewhat, but we will handle cases (I) and (II) simultaneously.
    \subsection{Proof in cases (I) and (II)}
Throughout this subsection we assume the following:
\begin{itemize}
    \item One of cases (I) or (II) above holds;
    \item $X$ is not definable in $(\N,+,k^{\N})$.
\end{itemize} 
It follows that $k^{\N}$ is definable in $(\N,+, X)$ by Fact \ref{bes_define_kn}.

We now define a series of functions $F_R: X\to k^{\N}$ for $R \in \N$ as follows. If $0\in X$, we take $F_R(0)=1$, and for $n\ge 1$ in $X$ we take $F_R(n)$ to be the largest $k^i\in k^{\N}$ such that $k^{i-1}\le n$ and for all $0 \leq r \leq R$ and for all words $v$ with $|v|=j\le r+i$, we have:

\begin{align}
k^r n - [a^{j-r} 0^r]_k + [v]_k &\in X \iff v\in L \qquad \text{if } j > r, \label{eq:gr} \\
k^r n + [a^{r-j} v]_k &\in X \iff v\in L \qquad \text{if } j \le r. \label{eq:lr}
\end{align}

We note two things about $F_R$:

\begin{itemize}
    \item For $n\ge 1$, $k^0$ (i.e., $i=0$) has the property that for all $r\ge 0$ and all words of length at most $r$ with $|v|\le r$, the condition in Equation (\ref{eq:lr}) holds, and so $F_R(n)\ge 1$ for all $n$. Therefore, $\min_R F_R(n)$ is well-defined; we denote this $F(n)$.
    \item Note that because $F_R$ is defined as the largest $k^i$ such that a universal quantification over $r \leq R$ holds, we must have that $(F_R(n))_R$ is monotone decreasing (in $R$).
\end{itemize}

We will now show that, due to the pumping lemma, the point at which the sequence $(F_R(n))_R$ stabilizes is uniform in $n$.

\begin{lem}
    Let $\calA'$ be a deterministic finite automaton recognizing $L$ with set of states $Q$, and let $M = (|Q|+1)^2$. Then $F_M$ and $F$ are identical.
\end{lem}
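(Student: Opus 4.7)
The plan is to reformulate the conditions defining $F_R(n)\ge k^i$ as a single agreement statement between two regular languages on a length range, and then apply Lemmas \ref{numstates_prefix} and \ref{numstates_iff} to bound the length of any witness to disagreement.

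First, I would verify that $F_R(n)\ge k^i$ (for $i\ge 1$) forces the last $i$ digits of the base-$k$ expansion of $n$ to be $a^i$, so that we may write the expansion of $n$ as $u=u'a^i$; set $q':=\delta(p,u')$. Using that $[m]_k\in X$ iff $\delta(p,\mathrm{can}(m))=p$ (which one checks separately in cases (I) and (II) using how leading zeros interact with the SCC of $p$) together with $\delta(p,a)=p$, the two equations (\ref{eq:gr}) and (\ref{eq:lr}) collapse into the single statement
\[
\delta(q',\tilde w)=p \iff \delta(p,\tilde w)=p \qquad\text{for every }\tilde w\text{ with }|\tilde w|\in[i,R+i].
\]
Equivalently, the path languages $\lpq{q'}{p}$ and $L=\lpq{p}{p}$ agree on $\Sigma_k^{[i,R+i]}$; the analogous condition for $F(n)\ge k^i$ is agreement on all of $\Sigma_k^{\ge i}$.

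Next I would decompose each $\tilde w$ of length in $[i,R+i]$ as $\tilde w=zw$ with $z\in\Sigma_k^i$ and $w\in\Sigma_k^{\le R}$. The agreement condition then separates over $z$: for each fixed $z$, we need the two languages $\{w : u'zw\in L\}$ and $\{w : zw\in L\}$ to agree on $\Sigma_k^{\le R}$. By Lemma \ref{numstates_prefix}, each of these languages has a DFA on at most $|Q|+1$ states, so by Lemma \ref{numstates_iff} their symmetric difference has a DFA on at most $(|Q|+1)^2=M$ states. Since any nonempty regular language recognised by a DFA on $M$ states contains a word of length at most $M-1$, agreement on $\Sigma_k^{\le M}$ forces equality of the two languages. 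Taking $R=M$ therefore gives $F_M(n)\ge k^i \Rightarrow F(n)\ge k^i$, and combined with monotonicity of $(F_R(n))_R$ we conclude $F_M(n)=F(n)$ for every $n$.

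The main obstacle lies in the first step: verifying that equations (\ref{eq:gr}) and (\ref{eq:lr}) really do collapse into the uniform statement above. This requires carefully tracking how the subtraction $k^rn-[a^{j-r}0^r]_k$ interacts with the base-$k$ expansion of $n$ under the assumption that its last $i$ digits are $a^i$, and using the idempotence $\delta(p,a)=p$ to absorb the $a$-padding in both equations into a single test word. Once this reformulation is in hand, the pumping argument via the two lemmas is essentially routine.
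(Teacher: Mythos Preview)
Your reformulation in step 1 does not go through: the claim that $F_R(n)\ge k^i$ forces the last $i$ base-$k$ digits of $n$ to equal $a^i$ is exactly the statement $F_R(n)\le V_{k,a}(n)$, and this is \emph{not} true in general. Indeed, the paper later devotes Lemmas~\ref{lem:n'1} and~\ref{lem:above1} to proving merely that $F(n)/V_{k,a}(n)$ is bounded above, and the argument there uses the non-eventual-periodicity of $X$ in an essential way (via Lemma~\ref{lem:per1}). If your claim held, that ratio would be at most $1$ unconditionally and those lemmas would be vacuous. Concretely, the conditions~(\ref{eq:gr}) with $r=0$ only say that membership in $X$ on the length-$k^i$ interval $[\,n-[a^i]_k,\ n-[a^i]_k+k^i-1\,]$ matches the pattern of $L\cap\Sigma_k^i$; nothing prevents this from holding for an $n$ whose last $i$ digits are not $a^i$, so the subtraction $n-[a^{j-r}]_k+[v_0]_k$ can genuinely produce carries into the high digits, and your collapse to the single statement ``$\delta(q',\tilde w)=p\iff \delta(p,\tilde w)=p$'' fails.

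The paper's proof sidesteps this entirely by never trying to analyse the digits of $n$. In the case $|v|>r$ it writes $v=v_0v_1$ with $|v_1|=r$, lets the carries fall where they may by setting $w_{j,v_0}$ to be \emph{any} base-$k$ expansion of $n-[a^{j-r}]_k+[v_0]_k$, and then works with the biconditional language $L'_{j,v_0}=\{v_1: w_{j,v_0}v_1\in L\leftrightarrow v_0v_1\in L\}$. Lemmas~\ref{numstates_prefix} and~\ref{numstates_iff} give this language a DFA on at most $M$ states, and one pumps on $v_1$ (which has length $r=R>M$) inside an induction on $R$. Your instinct to invoke those two lemmas and then pump is correct; what needs to change is the object you apply them to. Rather than the pair $(\{w:u'zw\in L\},\{w:zw\in L\})$ coming from a putative digit decomposition of $n$, use the pair $(\{v_1:w_{j,v_0}v_1\in L\},\{v_1:v_0v_1\in L\})$ indexed by $(j,v_0)$.
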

\begin{proof}
    Fix $n$. By the above remarks, this is equivalent to the claim that for all $R > M$, $F_{M}(n) = F_{R}(n)$.

    We proceed by induction on $R$. Assume that $F_{M}(n) = F_{M+1}(n) = \dots = F_{R-1}(n)$. We need to show that $F_{R-1}(n) = F_{R}(n)$. Let $F_{R-1}(n) = k^i$; it suffices to show that for $r = R$ and for all words $v$ with $|v| = j \leq r + i$, the appropriate condition (either (3) or (4)) is satisfied.

    We first consider the case where $|v| \leq r$. In this case, we need to show that $k^r n + [a^{r-|v|}v]_k \in X \iff v \in L$. Let $[u]_k = n$; then equivalently, we want $ua^{r-|v|}v \in L \iff v \in L$.

    Let $L' = \{a^{r'-|v'|}v' : ua^{r'-|v'|}v' \in L\}$. By Lemma \ref{numstates_prefix}, the automaton for $L'$ has at most $M$ states.
    
    Consider the string $a^{r-|v|}v$. 
    Note that this string has length $r = R > M$, so by the pumping lemma, we may write $a^{r-|v|}v = xyz$ where $y$ is nonempty and each $xy^mz \in L' \iff xyz \in L'$. Then $xz = a^{r'-|v'|}v'$ for some $r' < r$. 
    We know that $ua^{r'-|v'|}v' \in L \iff v \in L$ by inductive assumption, so $ua^{r-|v|}v \in L \iff xyz \in L' \iff xz \in L' \iff ua^{r'-|v'|}v' \in L \iff v \in L$.

    Now we consider the case where $|v| > r$. In this case, we need to show that $k^r n - [a^{j-r} 0^r]_k + [v]_k \in X \iff v \in L$. Let $v = v_0 v_1$ where $|v_1| = r$; then the condition $k^r n - [a^{j-r} 0^r]_k + [v]_k \in X$ may be rewritten as $k^r (n - [a^{j-r}]_k + [v_0]_k) + [v_1]_k$. Let $w_{j,v_0}$ be a word such that $[w_{j,v_0}]_k = n - [a^{j-r}]_k + [v_0]_k$; then we want to show that $w_{j,v_0} v_1 \in L \iff v_0 v_1 \in L$.

    In this case, we set $L'_{j,v_0} = \{v_1 : w_{j,v_0}v_1 \in L \leftrightarrow v_0 v_1 \in L\}$. By Lemmas \ref{numstates_prefix} and \ref{numstates_iff}, the automaton for $L'_{j,v_0}$ has at most $M$ states. Because $|v_1| = r = R > m$, we apply the pumping lemma and let $v_1 = xyz$ with $y$ nonempty and $xy^mz \in L'_{j,v_0} \iff xyz \in L'_{j,v_0}$ for all $m$, in particular $xz \in L'_{j,v_0} \iff xyz \in L'_{j,v_0}$.
    
    Let $v_1' = xz$, and let $|v_1'| = r' < r$. Let $j' = j + r' - r$. Then:

    \begin{align*} 
    [w_{j,v_0}v_1']_k &= k^{r'}(n - [a^{j-r}]_k + [v_0]_k) + [v_1']_k \\
    &= k^{r'}n - k^{r'}[a^{j-r}]_k + k^{r'}[v_0]_k + [v_1']_k \\
    &= k^{r'}n - k^{r'}[a^{j-r}]_k + [v_0v_1']_k \\
    &= k^{r'}n - k^{r'}[a^{j'-r'}]_k + [v_0v_1']_k \\
    &= k^{r'}n - [a^{j'-r'}0^{r'}]_k + [v_0v_1']_k \\
    \end{align*}

    By inductive assumption, $k^{r'}n - [a^{j'-r'}0^{r'}]_k + [v_0v_1']_k \in X \iff v_0 v_1' \in L$, i.e. $w_{j,v_0}v_1' \in L \iff v_0 v_1' \in L$. Therefore, $xz = v_1' \in L'_{j,v_0}$. It follows that $xyz = v_1 \in L'_{j,v_0}$, so $w_{j,v_0} v_1 \in L \iff v_0 v_1 \in L$, as required.
\end{proof}

  Intuitively, $F$ is trying to capture (imperfectly) the length of the largest suffix of the base-$k$ expansion that consists entirely of $a$'s.  We note that using geometric series, it is not difficult to show that the limiting map $F$ is definable in $(\N,+,X)$:
  
  \begin{lem}
  Under the assumptions above, the map $F$ is definable in $(\N,+,X)$.
  \end{lem}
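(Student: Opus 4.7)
The plan is to exhibit a first-order formula in $(\N,+,X)$ defining $F$ by directly unpacking the definition of $F_M$, leveraging three facts: (i) the previous lemma gives $F=F_M$ for the fixed constant $M=(|Q|+1)^2$, so the universal quantifier over $r$ unfolds as a finite conjunction; (ii) $k^\N$ is already definable in $(\N,+,X)$ by Fact~\ref{bes_define_kn}; and (iii) the geometric-series identity $(k-1)[a^p]_k=a(k^p-1)$ lets $[a^p]_k$ be characterized as the unique $y$ with $(k-1)y=a(k^p-1)$, a first-order condition in $(\N,+,k^\N)$ since multiplication by the fixed constants $k-1$ and $a$ is expressible using $+$.

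Combining these, I would render the condition ``$F(n)=k^i$'' as: $k^i\in k^\N$, $k^{i-1}\le n$, the admissibility condition of $F_M$ holds for $k^i$, and it fails for $k^{i+1}$. For each of the finitely many fixed $r\in\{0,\ldots,M\}$, I parametrize a word $v$ of length $j$ by the pair $(k^j,m)$ with $k^j\in k^\N$ and $0\le m<k^j$; the bound $j\le r+i$ becomes $k^j\le k^r\cdot k^i$ (with $k^r$ a literal constant), and all arithmetic quantities appearing in~(\ref{eq:gr}) and~(\ref{eq:lr}) are then expressible via (ii) and (iii). The only non-routine step is to translate the word-level predicate ``$v\in L$'' into a number-level predicate involving $X$. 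In Case~(I), $\delta(p,0)=\{p\}$ means $0$ is idempotent at $p$, so leading zeros do not affect membership in $L$, giving $v\in L$ iff $m\in X$. In Case~(II), $\delta(p,0)$ escapes the strongly connected component of $p$ and cannot return, so any word in $L$ is either empty or begins with a nonzero digit; this gives $v\in L$ iff $m\in X$ and ($j=0$ or $m\ge k^{j-1}$).

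The main point requiring care is precisely this word-to-number translation: naively identifying ``$v\in L$'' with ``$[v]_k\in X$'' would fail in Case~(II) because leading zeros carry information. The case split above resolves this, and once the translation is in hand, the rest of the proof is a mechanical assembly of the bounded quantifiers into a single first-order formula using only $+$, $X$, and the definable predicate $k^\N$.
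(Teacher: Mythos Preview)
Your proposal is correct and follows essentially the same approach as the paper's proof: both reduce to $F_M$ via the preceding lemma, define $k^\N$ via Fact~\ref{bes_define_kn}, express $[a^p]_k$ through the geometric-series identity, parametrize words by a pair (length-power, value), and handle the crucial translation of ``$v\in L$'' into a numeric predicate by the same Case~(I)/Case~(II) split (leading zeros irrelevant versus forbidden). The paper merely spells out the intermediate definable relations $\ell$, $L$, $a$, $E_{(3),r}$, $E_{(4),r}$, $A_r$ more explicitly, but the content is the same.
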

  \begin{proof}
  We will construct, step-by-step, several definable functions and relations that ultimately give us $F_R$ for a fixed $R$ when put together. As $F = F_M$ by the previous lemma, we conclude that $F$ itself is definable.
  Throughout this proof, we work in the expanded language $\{+,-,\equiv_n, (x \mapsto nx)_{n \in \N},<,X, k^\N\}$.
    We do this without loss of generality because it is elementary to show that the relations $<, >, \leq, \geq, \neq$, modular equivalence $\equiv_n$ modulo a constant $n$, multiplication $nx$ by a constant $n$, the graph of the partial function $-$, and all natural number constants themselves are definable in Presburger arithmetic, and because Fact \ref{bes_define_kn} guarantees that we can define $k^\N$ in $(\N,+,X)$.

    \begin{itemize}
        \item Let $\ell(x)$ be the smallest $k^i \in k^\N$ such that $x < k^i$. The graph of this function is defined by the formula $y \in k^\N \wedge \forall p_i \in k^\N \: (p_i > x \leftrightarrow p_i \geq y)$. Note that if $v$ starts with no leading zero, $\ell([v]_k)$ is encoded by the word $10^{|v|}$.
        \item Let $L(x, y)$ be the binary relation that holds precisely when there is a word $v$ with length $j$ such that $[v]_k = x$, $k^j = y$, and $v \in L$. The definition of this relation depends on whether we are in case (I) or (II). If case (I) holds, then for all $v$, we have $v \in L \iff 0v \in L$. Therefore, in this case, $L(x, y)$ is defined by the formula $y \geq \ell(x) \wedge x \in X$. If case (II) holds, then no word beginning with $0$ is in $L$. Therefore, in this case, $L(x, y)$ is defined by the formula $y = \ell(x) \wedge x \in X$.
        \item Let $a(x, y)$ be the partial function where, if $x = k^i$ and $y = k^j$, then $a(x, y) = [a^{i-j}0^j]_k$. For example, $a(7, 4) = [aaa0000]_k$. Notice that $a(i, j) = [a]_k (k^j + \dots + k^{i-1}) = [a]_k \frac{k^i-k^j}{k-1}$. Because $[a]_k$ and $(k-1)$ are constants, we therefore define the graph of $a$ by the formula $x \in k^\N \wedge y \in k^\N \wedge (k-1) z = [a]_k (x - y)$.
        \item Fix an arbitrary constant $r$. Let the relation $E_{(3),r}(x, y, n)$ hold if there is a word $v$ with length $j$ such that $[v]_k = x$, $k^j = y$, and equivalence (3) holds. Note that $k^r$ is a constant here. Then $E_{(3),r}$ is defined by the formula $y \geq \ell(x) \wedge (k^r n + x - a(y, k^r) \in X \leftrightarrow L(x, y))$.
        \item Fix an arbitrary constant $r$. Let the relation $E_{(4),r}(x, y, n)$ hold if there is a word $v$ with length $j$ such that $[v]_k = x$, $k^j = y$, and equivalence (4) holds. Note that $k^r$ is a constant here. Then $E_{(4),r}$ is defined by the formula $y \geq \ell(x) \wedge (k^r n + a(k^r, y) + x \in X \leftrightarrow L(x, y))$.
        \item Fix an arbitrary constant $r$. Let the relation $E_r(x, y, n)$ hold if there is a word $v$ with length $j$ such that $[v]_k = x$, $k^j = y$, and equivalence (3) or (4) holds depending on whether $j > r$. Then $E_r$ is defined by the formula $(y > k^r \wedge E_{(3),r}(x, y, n)) \vee (y \leq k^r \wedge E_{(3),r}(x, y, n))$.
        \item Let the relation $A_r(n, z)$ hold if $z = k^i$, $k^{i-1} \leq n$, and for all $v$ with $|v| = j \leq r + i$, we have $E_r(x, y, n)$ when $[v]_k = x$ and $k^j = y$. Then $A_r$ is defined by the formula $z \in k^\N \wedge \ell(n) \geq z \wedge \forall x \: \forall y \: ((y \in k^\N \wedge \ell(x) \leq y \wedge y \leq k^r z) \to E_r(x, y, n))$.
        \item Let the relation $A_{\leq R}(n, z)$ hold if $A_{r}(n, z)$ holds for all $r \leq R$. Then for a fixed $R$, $A_{\leq R}$ is defined by the formula $A_0(n, z) \wedge A_1(n, z) \wedge \dots \wedge A_R(n, z)$.
        \item The function $F_{R}(n)$ returns the largest $z = k^i \in k^\N$ such that $k^{i-1} < n$ (equivalently $k^i < kn$) and such that $A_{\leq R}(n, k^i)$ holds. Then the graph of $F_{R}$ is defined by the formula $z \in k^\N \wedge z < kn \wedge A_{\leq R}(n, z) \wedge \forall z' \: ((z' \in k^\N \wedge z' < kn \wedge A_{\leq R}(n, z')) \to z' \leq z)$.
    \end{itemize}
  \end{proof}
 \begin{exam}
 We give an example to show how to compute the function $F$ in a special case.  Consider the case when $k=3$ and when our distinguished letter is $a=1$ and when $X$ is the set of natural numbers whose base-$k$ expansion is in the cycle language of the state $q_2$ of the automaton in Figure \ref{fig:Fexam}, where we again read strings from left-to-right.  We claim that $F(22)=F([211]_3)=3$.
 \end{exam}
 
\begin{figure}[!htbp] 
\begin{center}
\begin{tikzpicture}[shorten >=1pt,node distance=2cm,on grid]
  \node[state]   (q_0)                {$q_0$};
  \node[state]           (q_1) [right=of q_0] {$q_1$};
    \node[state, accepting]     (q_2) [right=of q_1] {$q_2$};

  \path[->] (q_0)		(q_0) edge [loop below]   node         {0,2} ()
   (q_2)  (q_2) edge [loop above] node {0,1} () 
   (q_1)  (q_1) edge [loop above] node {0,2} () 
  (q_0)  (q_0) edge [bend right=50] node [below] {1} (q_2) 
  (q_1) (q_1) edge [bend left=50] node [above] {1} (q_2)
  (q_2) (q_2) edge node [below] {2} (q_1);
  \end{tikzpicture}
  \end{center}
  \caption{An automaton with cycle language at $q_2$ having $F(22)=3$.}
  \label{fig:Fexam}
\end{figure}
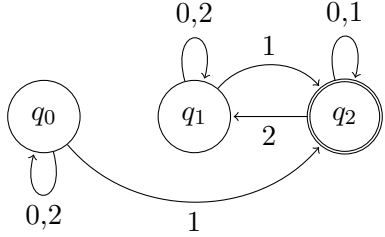

Then if we take $n=[211]_3=22$, $F(n)=3$.  
To see this, notice that if $r\ge 0$ and $v$ is a word with length at most $r+1$ with $|v|= r+1$, then 
$3^r n - [1^{|v|-r}0^r]_3 + [v]_3$ has ternary expansion
$21v$, which is in the cycle language of the state $q_2$ if and only if $v$ is.  
If $|v|\le r$, then 
$$3^r n + [1^{r-j} v]_k$$ has ternary expansion $21^{2+r-|v|} v$, which is again in the cycle language if and only if $v$ is.  It follows that $F(22)\ge 3$.  To see that $F(22)<9$, observe that for $r=0$, $v=00$, then $n - [11]_3+[v]_3 = [200]_3$, which is not in $X$, while $[00]_3$ is.  Thus $F(22)=3$, as claimed.

We now return to the proof of Theorem \ref{thm:reduction1} in cases (I) and (II). By definition of the map $F$, we also have implication:
\begin{equation} \label{eq:i+1}
n\le k^i \implies F(n)\le k^{i+1},
\end{equation} for $n\in X$.  

As mentioned earlier, we view the function $F$ as a (definable) proxy for the function $V_{k,a}$ on the set $X$, where
$V_{k,a}(n)=k^i$ if the base-$k$ expansion of $n$ has a suffix of length $i$ consisting entirely of $a$'s and no longer suffix with this property.  As it turns out, we can show that $F$ approximates $V_{k,a}$ up to multiplicative constants.

\begin{lem} There is a natural number $\beta\ge 1$, depending only on $X$ such that for $n\in X$ we have $F(n)\ge V_{k,a}(n)/k^{\beta}$.
\label{lem:below1}
\end{lem}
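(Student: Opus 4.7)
The plan is to take $\beta$ large enough to bound, from any state $q\in Q$, the time for the $a$-walk to arrive at $p$. For $q\in Q$ let $T(q)$ be the least $t\ge 0$ with $\delta(q,a^t)=p$, or $\infty$ if no such $t$ exists. Because $\delta(p,a)=p$, once the $a$-walk reaches $p$ it stays there, and by a pigeonhole argument on the first $|Q|+1$ states of the walk, $T(q)\le |Q|-1$ whenever $T(q)<\infty$. I let $\beta$ be the maximum of $1$ and $\max\{T(q):q\in Q,\,T(q)<\infty\}$. This is a finite quantity depending only on $\mathcal{A}$, hence only on $X$.

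Given $n\in X$, write $V_{k,a}(n)=k^I$, so the base-$k$ expansion of $n$ is $ua^I$, with $u$ empty or ending in a digit other than $a$. If $I\le\beta$ then $V_{k,a}(n)/k^\beta\le 1\le F(n)$, so I assume $I>\beta$ and set $i:=I-\beta\ge 1$. The size inequality $k^{i-1}\le n$ is immediate since $n$ has at least $I$ base-$k$ digits with non-zero leading digit, giving $n\ge k^{I-1}\ge k^{i-1}$. It remains to verify that $k^i$ satisfies equivalences (\ref{eq:gr}) and (\ref{eq:lr}) in the definition of $F$, which yields $F(n)\ge k^i=V_{k,a}(n)/k^\beta$.

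To that end, put $q:=\delta(p,u)$. From $n\in X$ we have $\delta(q,a^I)=p$, so $T(q)<\infty$ and by the choice of $\beta$, $\delta(q,a^s)=p$ for every $s\ge\beta$. Fix $r\ge 0$ and a word $v$ with $|v|=j\le r+i$. In the subcase $j\le r$, a straightforward calculation shows the base-$k$ expansion of $k^r n+[a^{r-j}v]_k$ is $ua^{I+r-j}v$, so $\delta(p,ua^{I+r-j}v)=\delta(\delta(q,a^{I+r-j}),v)=\delta(p,v)$, since $I+r-j\ge I\ge\beta$. In the subcase $j>r$, the bound $j-r\le i\le I$ ensures that positions $r,\ldots,j-1$ of the expansion $ua^I 0^r$ of $k^r n$ are all $a$'s; subtracting $[a^{j-r}0^r]_k$ zeroes those positions without borrowing, and adding $[v]_k$ then writes $v$ into them, producing the expansion $ua^{I-(j-r)}v$ with $I-(j-r)=r+I-j\ge\beta$, so the transitions again collapse to $\delta(p,v)$. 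The main technical point will be this base-$k$ bookkeeping in the subtraction subcase; the requirement $\beta\ge 1$ is imposed to prevent the constructed expansion from beginning with a spurious leading zero in Case (II), where $L$ is not closed under prepending zeros.
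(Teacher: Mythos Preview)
Your proof is correct and follows essentially the same approach as the paper. The only difference is cosmetic: you define $\beta$ directly as the maximum finite hitting time of the $a$-walk to $p$, whereas the paper obtains its bound via a pigeonhole argument on the vectors $(\delta(q,a^r))_{q\in Q}$ to find $r_1<r_2$ with $\delta(q,a^{r_1})=\delta(q,a^{r_2})$ and sets $\beta=r_2$; in both cases the payoff is exactly that $\delta(\delta(p,u),a^s)=p$ whenever $s\ge\beta$, and the verification of equivalences~(\ref{eq:gr}) and~(\ref{eq:lr}) via the explicit digit bookkeeping $k^r n -[a^{j-r}0^r]_k+[v]_k=[ua^{I-(j-r)}v]_k$ and $k^r n+[a^{r-j}v]_k=[ua^{I+r-j}v]_k$ matches the paper's computation.
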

\begin{proof} 
Since there are only finitely many distinct vectors of sets $(\delta(q,a^r))_{q\in Q}$ for $r\in \N$,
by the pigeonhole principle, there exist natural numbers $r_1,r_2$, with $r_1<r_2$, such that 
$\delta(q,a^{r_1}) = \delta(q,a^{r_2})$ for all $q\in Q$.  We claim that we can take $\beta=r_2$.

If $n=0$ there is nothing to prove, so we may assume $n\in X$ is positive. In addition, if the base-$k$ expansion of $n$ is of the form $a^m$ for some $m\ge 1$, then it follows from the definition that $F(n)\ge V_{k,a}(n)/k=k^{m-1}$.  Thus we may assume that $n$ has at least one base-$k$ digit not equal to $a$. 

Let $wja^m$ denote a base-$k$ of $n$ that is in $L$, where $w\in \Sigma^*$ is a possibly empty word and $j\in \Sigma_k\setminus \{ a\}$.  We claim that $F(n) \ge k^{m-r_2}$.  If $m\le r_2$, the claim is clear, since $F(n)\ge 1$ for all $n\in X$, so we assume that $m>r_2$.

Since $\delta(p,wja^m)=\{p\}$, by definition of $r_1$ and $r_2$, there exists $m_0< r_2$ such that $\delta(p,wja^{m_0})=\{p\}$.  Moreover, since $\delta(p,a)=\{p\}$, we in fact have
$\delta(p,wja^{e})=\{p\}$ for all $e\ge m_0$.

To show that $F(n)\ge V_{k,a}(n)/k^{r_2}= k^{m-r_2}$, let $r\ge 0$ and let $v\in \Sigma_k^*$ be a word of length at most $m-r_2+r$

We must show that the conditions in Equations (\ref{eq:gr}) and (\ref{eq:lr}) hold.

First, if $|v|>r$ then 
$k^r n - [a^{|v|-r}0^r]_k + [v]_k$ has 
base-$k$ expansion of the form $wja^{e}v$ for some $e\ge m_0$ and since $wja^{e}$ is in $L$, we see that 
$wja^e v \in L$ if and only if $v\in L$.  Thus since we are in either case (I) or (II) we see that
$$k^r n - [a^{|v|-r}0^r]_k + [v]_k\in X\iff v\in L$$ in this case.

Next, if $|v|\le r$, then 
$k^r n + [a^{r-|v|} v]_k$ has base-$k$ expansion 
$wja^e v$ with $e\ge m_0$ and so as in the previous case we have that 
$k^r n + [a^{r-|v|} v]_k\in X\iff v\in L$.

Thus $F(n)\ge k^{m-r_2} = V_{k,a}(n)/k^{\beta}$.  
\end{proof}
We now have shown that $F(n)/V_{k,a}(n)$ is bounded below by $1/k^{\beta}$ for some $\beta$ that depends only on $X$ and not on $n$. We will next show that this ratio is bounded above, although this requires our assumption that the set $X$ not be eventually periodic. 

We first require a monotonicity result. 
 \begin{lem} For $w\in L$ and all $i\ge 1$, $F([wa^i]_k) \ge k^i F([w]_k)$.
 \label{lem:multk1}
 \end{lem}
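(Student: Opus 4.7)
The plan is to verify, directly from the definition of $F$, that if $F([w]_k) = k^\alpha$ then $F([wa^i]_k) \geq k^{\alpha+i}$. Writing $n = [w]_k$ and $n' = [wa^i]_k$, the key algebraic identity is $n' = k^i n + [a^i]_k$, so that $k^r n' = k^{r+i} n + [a^i 0^r]_k$. I would use this to transform each instance of the defining condition (\ref{eq:gr}) or (\ref{eq:lr}) for $F(n') \geq k^{\alpha+i}$ at parameter $r$ into an instance of the corresponding condition for $F(n) \geq k^\alpha$ at the shifted parameter $r'' := r + i$; the length constraint $|v| \leq r + \alpha + i = r'' + \alpha$ is exactly what $F(n) = k^\alpha$ covers at $r''$.

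Concretely, I would fix $r \geq 0$ and a word $v$ with $|v| = j \leq r + \alpha + i$, and verify the appropriate condition. The verification splits into three subcases according to the ordering of $j$, $r$, and $r+i$. When $j > r+i$, the condition (\ref{eq:gr}) for $n'$ at $r$ reduces to (\ref{eq:gr}) for $n$ at $r''$ on the same word $v$, via the identity $[a^{j-r}]_k = [a^{j-r-i}]_k k^i + [a^i]_k$. When $j \leq r$, the condition (\ref{eq:lr}) for $n'$ at $r$ reduces to (\ref{eq:lr}) for $n$ at $r''$ on the same word $v$, using the concatenation identity $a^i a^{r-j} v = a^{r+i-j} v$. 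Both reductions are essentially bookkeeping, and the defining property of $F(n)$ at $r''$ supplies the required equivalence with $v \in L$.

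The interesting middle subcase is $r < j \leq r+i$: here a condition of shape (\ref{eq:gr}) for $n'$ morphs into one of shape (\ref{eq:lr}) for $n$ at $r''$, because the ``subtracted'' suffix $a^{j-r}$ is shorter than the appended suffix $a^i$, so the difference $[a^i 0^r]_k - [a^{j-r} 0^r]_k$ collapses into a pure summand that combines with $[v]_k$ to form $[u]_k$, where $u := a^{i-j+r} v$ has length exactly $r''$. Applying the defining property of $F(n)$ at $r''$ to $u$ yields $k^{r''} n + [u]_k \in X \iff u \in L$, and one closes by using $\delta(p, a) = \{p\}$ (so that $a^m v \in L \iff v \in L$ for all $m$) to conclude $u \in L \iff v \in L$. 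The main obstacle is precisely this middle subcase: one must recognize that (\ref{eq:gr}) for $n'$ rewrites cleanly as (\ref{eq:lr}) for $n$, and then invoke the cycle-language property at $p$ to erase the prefix $a^{i-j+r}$.
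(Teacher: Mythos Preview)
Your proof is correct and follows essentially the same approach as the paper: rewrite $k^r n'$ as $k^{r+i} n + [a^i 0^r]_k$ and reduce each instance of (\ref{eq:gr}) or (\ref{eq:lr}) for $n'$ at parameter $r$ to an instance for $n$ at the shifted parameter $r+i$. The only cosmetic difference is that the paper reduces to $i=1$ and inducts, so the middle subcase $r < j \leq r+i$ collapses to $j = r+1$ (where $u = v$) and no explicit appeal to $\delta(p,a)=\{p\}$ is needed; your direct treatment of general $i$ is equally valid and handles that subcase correctly.
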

 \begin{proof} Since $w\in L\implies wa\in L$, it suffices to prove the result when $i=1$ and the general result will then follow by induction.
 
 Let $t$ be such that $k^t=F(n)$, where $n=[w]_k$ with $w\in L$.   Then Equations (\ref{eq:gr}) and (\ref{eq:lr}) give that for all $r\ge 0$ and all words $v$ of length 
at most $r+t$ with $|v|=j\le r+t$, we have
$$k^r n - [a^{j-r}\cdot 0^r]_k + [v]_k \in X \iff v\in L$$ if $j>r$; and we have
$$k^r n + [a^{r-j} v]_k \in X \iff v\in L$$ if $j\le r$.

Now let $m=kn+a$, so $m=[wa]_k$.  Then for $r\ge 0$ and a word $v$ of length at most $r+t+1$, if $|v|>r$ then
$$k^r m - [a^{|v|-r} 0^r]+[v]_k = k^{r+1} n + ak^r - [a^{|v|-r} 0^r]_k+[v]_k = k^{r+1}n - [a^{|v| - (r+1)}0^{r+1}]_k +[v]_k,$$
and so we see that this is in $X$ if and only if $v\in L$.  

Similarly, if $|v|\le r$ then 
$$k^r m + [a^{r-j} v]_k = k^{r+1}n + [a^{r+1-j}v]_k,$$ and so this is $X$ if and only if $v\in L.$
It follows that $F(kn+a) = F([wa]_k) \ge kF([w]_k)$, and the general result now follows.     
 \end{proof}

 We now give an argument that shows that when $F(n)$ takes some value, we can find an $n'$ in $X$ such that $F(n')\ge F(n)$ with the property that $n'$ is bounded above by $F(n')$ up to a multiplicative constant.  
 \begin{lem} Let $n\in X$.  Then there is a fixed natural number $\alpha$, depending only on $X$, such that whenever $F(n)= k^y > V_{k,a}(n)$, there exists $n'\in X$ with $F(n')\ge F(n)$, $V_{k,a}(n)=V_{k,a}(n')$, and $n' < k^{\alpha}F(n')$.
 \label{lem:n'1}
 \end{lem}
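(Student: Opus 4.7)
The plan is to construct $n'$ by compressing the high-order portion of the base-$k$ expansion of $n$ via a pigeonhole argument on the automaton $\mathcal{A}$, while preserving enough of its low-order portion that the probe equivalences defining $F$ transfer. Write $n=[w'ja^v]_k$ with $j\in \Sigma_k \setminus\{a\}$, so that $V_{k,a}(n)=k^v$. Since $F(n)=k^y>k^v$ and the definition of $F$ requires $k^{y-1}\le n$, we have $|w'|\ge y-v-1$. If $|w'|\le y-v$, then $n<k^{y+1}$ and we may take $n':=n$, so henceforth assume $|w'|>y-v$ and decompose $w'=w''s$ where $s$ consists of the lowest $y-v$ digits of $w'$.

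Apply the pigeonhole principle to the sequence of states $p,\delta(p,w''_1),\ldots,\delta(p,w'')$ encountered when reading $w''$ from $p$: successive loop-excisions produce a word $t$ with $|t|\le |Q|$ and $\delta(p,t)=\delta(p,w'')$. Set $u:=ts$ and $n':=[uja^v]_k$. Then $|u|+1+v\le |Q|+1+y$, whence $n'<k^{|Q|+1+y}$, so the constant $\alpha:=|Q|+1$ controls $n'/F(n')$. The easy conditions follow immediately: since $\delta(p,u)=\delta(\delta(p,t),s)=\delta(\delta(p,w''),s)=\delta(p,w')$, we get $\delta(p,uja^v)=\delta(p,w'ja^v)=\{p\}$, so $n'\in X$; and $V_{k,a}(n')=k^v=V_{k,a}(n)$ because the digit $j\ne a$ still precedes exactly $v$ copies of $a$ at the end of $uja^v$.

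The heart of the argument is the verification that $F(n')\ge k^y$, that is, that for all $r\ge 0$ and all words $v'$ with $|v'|\le r+y$ the probe equivalences \eqref{eq:gr} and \eqref{eq:lr} hold at $n'$. In the noncascading range $|v'|\le r+v$, the modified quantity has base-$k$ expansion $u\,j\,a^{v+r-|v'|}\,v'$, and since $\delta(p,u)=\delta(p,w')$ the automaton reaches the same state on this word as on $w'\,j\,a^{v+r-|v'|}\,v'$; the probe equivalence at $n$ provided by the hypothesis $F(n)=k^y$ therefore transfers to $n'$.

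The main obstacle is the cascading range $|v'|>r+v$, in which the subtraction of $[a^{|v'|-r}0^r]_k$ from $k^r n'$ may produce a borrow propagating past position $y$ into the differing upper parts $w''$ of $n$ and $t$ of $n'$. This is dealt with by combining three ingredients: first, the construction of $s$ ensures that positions $0,\ldots,y$ of $n$ and $n'$ coincide, so any borrow-and-subtraction arithmetic confined to this range produces identical digits in both modified numbers; second, the idempotency assumptions $\delta(q,00)=\delta(q,0)$ and $\delta(q,(k-1)(k-1))=\delta(q,k-1)$ for all $q\in Q$ render the automaton insensitive to the length of any run of $0$'s or of $(k-1)$'s that appears (before or after modification) in the prefix, collapsing such runs to single letters; and third, the identity $\delta(p,t)=\delta(p,w'')$ combined with this idempotency-driven collapse lets us conclude that the automaton ends in the same state after reading the two modified prefixes. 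Combined with the probe equivalence at $n$ given by $F(n)\ge k^y$, this yields the required equivalence at $n'$, completing the proof with $\alpha=|Q|+1$.
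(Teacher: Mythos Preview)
Your outline captures the right high-level strategy---shorten the prefix via a state-pigeonhole and show the $F$-probes transfer---and the noncascading range $|v'|\le r+v$ is handled correctly. The gap is in the cascading range. You correctly observe that positions $0,\ldots,r+y$ of $k^r n$ and $k^r n'$ coincide, so the carry or borrow \emph{out of} position $r+y$ is the same for both. But that common $\pm 1$ then enters position $r+y+1$, which is the lowest digit of $w''$ in $k^r n$ and of $t$ in $k^r n'$, and those digits need not agree (loop-excision does not preserve suffixes). A $+1$ carry hitting a $k-1$ propagates, while hitting a digit $\le k-2$ it stops; so the carry may run to different depths inside $w''$ and inside $t$, producing modified prefixes whose images under $\mathcal{A}$ bear no relation to one another. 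Knowing $\delta(p,t)=\delta(p,w'')$ tells you nothing about $\delta(p,\hat t)$ versus $\delta(p,\hat w)$ for the carry-modified words $\hat t,\hat w$, and idempotency of $0$ and $k-1$ only collapses the length of an \emph{intact} run---it does not govern how a carry interacts with two structurally different words. Your ``third ingredient'' is thus an assertion rather than an argument.

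The paper repairs exactly this point by inserting a firewall layer between the compressible prefix and the probed suffix. Writing $n=[wu]_k$ with $|u|=y$, one decomposes $w=w_0'\, s\, c^{\ell}$ where $c\in\{0,k-1\}$, $\ell\ge 0$ is maximal, and (when $w_0$ is nonempty) $s\in\{1,\ldots,k-2\}$ is the first digit above the $c$-run. Only $w_0'$ is shortened by pigeonhole to some $w_0''$ with $\delta(p,w_0')=\delta(p,w_0'')$; the block $s\,c^{\min(\ell,2)}$ is kept verbatim in $n'$. Any $\pm 1$ carry out of the $u$-range then either stops at the bottom of the $c$-run or propagates through it and is absorbed by $s$ (since $s\pm 1\in\Sigma_k$); in either case it never reaches $w_0'$ or $w_0''$, and idempotency of $c$ reconciles the lengths $\ell$ versus $\min(\ell,2)$. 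It is this firewall digit $s$, not the bare identity of states on the compressed part, that makes the probe equivalences transfer; this is the missing idea in your construction.
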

 \begin{proof}
 We claim we can take $\alpha:=M+2$, where $M$ is the number of states in our minimal automaton $\mathcal{A}$. By definition of $F(n)$ we have $n\ge k^{t-1}$ and so every base-$k$ expansion of $n$ has at least $t$ digits.  We pick a base-$k$ expansion $wu$ of $n$ in $L$ where $w,u\in \Sigma_k^*$ and $|u|=t$.  
When analyzing Equations (\ref{eq:gr}) and (\ref{eq:lr}), the main difficulty that arises is that carries can occur, either when performing additions or subtractions. 

We first note that we may assume the prefix $w$ is non-empty: if not, $n=[u]_k <k^{t+1}$ and so if we take $n'=n$ we have $n'< k^{\alpha} F(n)= k^{\alpha} F(n')$, so the result is clear in this case.

Thus we assume that $w$ is non-empty and in this case it can be written uniquely in the form $w=w_0 c^{\ell}$ for some $\ell\ge 0$, where $c\in \{0,k-1\}$ and $w_0$ is either empty or a word whose last letter is in $\{1,2,\ldots ,k-2\}$.

Thus we now write $n=[w_0 c^{\ell}u]_k$. 
We consider the cases when $w_0$ is empty and when it is not empty separately.
To ease arguments, we write $w_1\sim_L w_2$ to mean that either $w_1$ and $w_2$ are both in $L$ or neither is in $L$. 
\vskip 2mm
{\bf Case A:} $w_0$ is not the empty word.
\vskip 2mm
In this case, we write $w_0 = w_0' s$ where $s\in \{1,2,\ldots ,k-2\}$.  
Since our minimal automaton $\mathcal{A}$ has at most $M$ states, there is some word $w_0''$ of length $<M$ such that $\delta(q_0,w_0')=\delta(q_0,w_0'')$.
We now take $n' = [w_0''s c^{\min(\ell,2)} u]_k$.  Notice that 
$n' < k^{t+M+2}$, so it suffices to show that $F(n')\ge k^t$.

It now suffices to verify that Equations (\ref{eq:gr}) and (\ref{eq:lr}) hold for words $v$ of length $t+r$ for $n'$.  We only consider Equation (\ref{eq:gr}), since Equation (\ref{eq:lr}) is verified in a similar manner.

Since $F(n)=k^t$, by Equation (\ref{eq:gr}) we have
for $r\ge 0$ and $|v|\le t+r$, if $|v|>r$ then
 $$k^r n - [a^{j-r}0^r]_k + [v]_k = k^r[c^{\ell} u]_k - [a^{j-r}0^r]_k + [v]_k$$ is in $X$ if and only if $v\in L$. Moreover, observe that the quantity $k^r [u]_k -  [a^{j-r}0^r]_k + [v]_k$ is greater than or equal to
 $-k^{t+r}+1$ and is at most $2\cdot k^{r+t}-1$.  
 
 We now look at various subcases.
 \vskip 1mm
{\bf Subcase (i):} $k^r [u]_k -  [a^{j-r}0^r]_k + [v]_k<0$, $c=k-1$, and $\ell\ge 2$. 
\vskip 1mm
In this case, 
 $$ k^r[w_0' s c^{\ell} u]_k - [a^{j-r}0^r]_k + [v]_k  
 $$
 either has base-$k$ expansion of the form $w_0' s c^{\ell-1} (c-1) v'$ or of the form $w_0' s c^{\ell} v'$ for some $v'$ which depends on $u$ and $v$ but not on $\ell\ge 2$. In particular, since $k-1$ is idempotent with respect to $\mathcal{A}$ we see that
$w_0' s c^{\ell-1}(c-1) v'\sim_L w_0'' s c (c-1)v'$ and $w_0' s c^{\ell-1}v'\sim_L w_0'' s ccv' $.  Thus
we see that $k^r n'  - [a^{j-r}0^r]_k + [v]_k\in X \iff v\in L$ in this case.
\vskip 1mm
{\bf Subcase (ii):} $k^r [u]_k -  [a^{j-r}0^r]_k + [v]_k<0$, $c=k-1$, and $\ell<2$.
\vskip 1mm
This case is handled similarly to Subcase (i), but where we now use $n'=[w_0'' s c^{\ell} u]_k$.
\vskip 1mm
{\bf Subcase (iii):} $k^r [u]_k -  [a^{j-r}0^r]_k + [v]_k<0$, $c=0$, and $\ell\ge 2$. 
\vskip 1mm
In this case, 
 $$ k^r[w_0' s c^{\ell} u]_k - [a^{j-r}0^r]_k + [v]_k 
 $$
 either has base-$k$ expansion of the form $w_0' (s-1) (k-1)^{\ell} v'$ or of the form $w_0' s c^{\ell} v'$ for some $v'$ which depends on $u$ and $v$ but not on $\ell\ge 2$.  Then using the fact that $c=0$ is idempotent with respect to $\calA$
 we see as in Subcase (i) that if $n'=[w_0'' s 0^{\ell}]_k$ then 
$k^r n'  - [a^{j-r}0^r]_k + [v]_k\in X \iff v\in L$.
\vskip 1mm
{\bf Subcase (iv):} $k^r [u]_k -  [a^{j-r}0^r]_k + [v]_k<0$, $c=0$, and $\ell<2$. 
\vskip 1mm
This case is handled similarly to Subcase (ii), but where we now use $n'=[w_0'' s c^{\ell} u]_k$.
\vskip 1mm
{\bf Subcase (v):}  $k^r [u]_k -  [a^{j-r}0^r]_k + [v]_k\ge 0$, $c=k-1$, and $\ell\ge 2$.
\vskip 1mm
In this case, 
 $$ k^r[w_0' s c^{\ell} u]_k - [a^{j-r}0^r]_k + [v]_k 
 $$
 either has base-$k$ expansion of the form $w_0' (s+1) 0^{\ell} v'$ or of the form
 $w_0' s (k-1)^{\ell} v'$
 for some $v'$ which depends on $u$ and $v$ but not on $\ell\ge 2$.
In this case, we argue as in Subcase (i) with $n'=[w_0'' s c^{2}u]_k$.
\vskip 1mm
{\bf Subcase (vi):}  $k^r [u]_k -  [a^{j-r}0^r]_k + [v]_k\ge 0$, $c=k-1$, and $\ell< 2$.
\vskip  1mm
Here we argue similarly in Subcase (iv) with $n' = [w_0'' s c^{\ell}u]_k$.

\vskip 1mm
{\bf Subcase (vii):}  $k^r [u]_k -  [a^{j-r}0^r]_k + [v]_k\ge 0$, $c=0$, and $\ell\ge 2$.
\vskip 1mm
In this case 
$$ k^r[w_0' s c^{\ell} u]_k - [a^{j-r}0^r]_k + [v]_k  
 $$
 either has base-$k$ expansion of the form $w_0' s 0^{\ell-1} 1 v'$ or of the form
 $w_0' s 0^{\ell} v'$
 for some $v'$ which depends on $u$ and $v$ but not on $\ell\ge 2$.  Then we argue as in the earlier subcases with $n'=[w_0'' s 0^2 u]_k$.
 \vskip 1mm
 {\bf Subcase (viii):}  $k^r [u]_k -  [a^{j-r}0^r]_k + [v]_k\ge 0$, $c=0$, and $\ell<2$.
 \vskip 1mm
 This is done as in the previous subcase with $n'=[w_0'' s 0^{\ell} u]_k$.
 \vskip 2mm
 {\bf Case B:} $w_0$ is the empty word.
 \vskip 2mm
 This case is done similarly to Case A, where we take $n' = [c^{\min(\ell, 2)} u]_k$.
\vskip 2mm
Thus we have found $n'\in X$ with $F(n')\ge k^t$ and $n'\le k^{M+2}k^t.$
 \end{proof}

 \begin{defn}
   We say that two positive integers $n_1, n_2$ are $(k,a)$-{\bf equivalent} if there is some $i\ge 0$ such that either $n_1=k^i n_2 + (k^i-1)a/(k-1)$ or $n_2=  k^i n_1 + (k^i-1)a/(k-1)$.
 \end{defn}
 This notion of $(k,a)$-equivalence is easily seen to be an equivalence relation, and intuitively it says that $n_1$ and $n_2$ are equivalent if the base-$k$ expansion of one can be obtained from the other by either appending a suffix of $a$'s at the end or by deleting a string of $a$'s at the end.

 In the following lemma, we will use Mahler series to prove that an automatic set that is periodic on a sufficiently long interval is necessarily eventually periodic. A power series $h(t)\in \mathbb{Z}[[t]]$ is $k$-{\bf Mahler}, if it satisfies a non-trivial equation
\begin{equation}\label{eq:mahler}
\sum_{i=0}^r P_i(t) h(t^{k^i})=0
\end{equation}
with $P_0P_r\neq 0$ and $P_0,\ldots ,P_r$ polynomials in $t$. If $h(t)$ is the characteristic function of a $k$-automatic set, then $h(t)$ is $k$-regular \cite{AS92} and thus $k$-Mahler \cite{Bec94}.
For a power series $f(t)\in \mathbb{Z}[[t]]$ we write
$f(t)=O(t^N)$ to mean that $f(t)\in t^N\mathbb{Z}[[t]]$.

\begin{lem}
Let $T$ be a $k$-automatic set and let $C\ge 0$.  Then there exists a natural number $N$ depending only on $T$ and $C$, such that if there is some $p \ge 1$ such that
$n+p \in T\iff n\in T$ for $n\in [Cp, Np)$, then $T$ is eventually periodic.
\label{lem:per1}
 \end{lem}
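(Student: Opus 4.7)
The plan is to compare $T$ with its natural $p$-periodic extension past $Cp$ and use the fixed $k$-Mahler equation satisfied by the generating function of $T$ — whose order, polynomial degrees, and coefficients depend only on $T$ — to propagate the hypothesised agreement on $[Cp,Np)$ out to all of $\N$.

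First, set $h(t) := \sum_{n\in T} t^n$, and let $\tilde T$ be the eventually $p$-periodic subset of $\N$ that agrees with $T$ on $[0,Cp+p)$; a short induction using the hypothesis shows $T = \tilde T$ on $[0,Np)$. Its generating function is rational: $\tilde h(t) = Q(t)/(1 - t^p)$ with $Q \in \Z[t]$ of degree $<(C+1)p$, so $g(t) := h(t) - \tilde h(t)$ satisfies $g(t) = O(t^{Np})$. Since $T$ is $k$-automatic, $h$ is $k$-Mahler: there exist $P_0,\dots,P_r \in \Z[t]$ with $P_0 P_r \ne 0$, depending only on $T$, and with $D := \max_i \deg P_i$, satisfying $\sum_{i=0}^r P_i(t) h(t^{k^i}) = 0$. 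Forming the same combination for $g = h - \tilde h$ gives
\[
\Phi(t) \;:=\; \sum_{i=0}^r P_i(t) g(t^{k^i}) \;=\; -\sum_{i=0}^r P_i(t) \tilde h(t^{k^i}),
\]
which is rational, with denominator dividing $\prod_{i=0}^r (1 - t^{pk^i})$ (constant term $1$) and polynomial numerator $N_\Phi$ of degree at most $\alpha p k^r + \beta$, for some $\alpha, \beta$ depending only on $T$ and $C$.

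Next I would exploit the vanishing order inherited by $\Phi$ from $g$: since $g(t^{k^i}) = O(t^{Npk^i})$ for each $i$, the $i = 0$ summand dominates and gives $\Phi = O(t^{Np})$, whence $N_\Phi = O(t^{Np})$ as well. A polynomial of degree $\le \alpha p k^r + \beta$ that vanishes to order $\ge Np$ must be identically zero as soon as $N > \alpha k^r + \beta$, a threshold depending only on $T$ and $C$. Thus $\Phi = 0$, and $g$ satisfies the same Mahler equation as $h$. A lowest-order (valuation) argument then forces $g = 0$: if $n_0 \ge Np$ were the smallest index with $g_{n_0} \ne 0$, each summand $P_i(t) g(t^{k^i})$ with $i \ge 1$ would have $t$-valuation at least $k n_0$, strictly larger than $\nu(P_0) + n_0$ once $n_0 > D/(k-1)$; the $i = 0$ summand would then contribute a nonzero coefficient at $t^{\nu(P_0) + n_0}$ with nothing to cancel it, contradicting $\Phi = 0$. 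Enlarging $N$ to also exceed $D/(k-1)$ absorbs this into the same threshold, so $g = 0$, i.e., $T = \tilde T$ is eventually periodic.

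The main obstacle is that $\tilde h$'s own Mahler equation necessarily depends on $p$, so one cannot directly search for a $p$-independent Mahler equation for $g$. The right move is to apply the $p$-independent Mahler operator of $h$ to $g$ and accept a $p$-dependent inhomogeneity $\Phi$; the delicate balance is then that $\deg N_\Phi$ grows only linearly in $p$ while the vanishing order of $\Phi$ (inherited from $g = O(t^{Np})$) scales like $Np$, so a constant $N$ depending only on $T$ and $C$ forces the vanishing order to dominate the degree uniformly in $p$, kills $\Phi$, and closes the argument.
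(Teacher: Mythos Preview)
Your proposal is correct and follows essentially the same strategy as the paper: write $h$ as a rational $p$-periodic approximation plus a high-order remainder, apply the fixed $k$-Mahler operator of $h$, use a degree-versus-valuation count (linear in $p$ on both sides) to kill the resulting polynomial numerator, and finish with the standard lowest-order-term argument to force the remainder to vanish. The only differences from the paper are cosmetic: you name $\tilde h$ and $g=h-\tilde h$ explicitly and clear denominators with the full product $\prod_i(1-t^{pk^i})$, whereas the paper works with $q(t)=h(t)(1-t^p)+O(t^{(N-1)p})$ and clears denominators with the single factor $1-t^{pk^r}$ (which is divisible by each $1-t^{pk^i}$).
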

\begin{proof}
Let $h(t)\in \mathbb{Z}[[t]]$ be the characteristic function of $T$.  Then $h$ is $k$-Mahler and so it satisfies a non-trivial $k$-Mahler equation as given in Equation (\ref{eq:mahler}).

The condition that $j+p \in T\iff j\in T$ for $j+p\in [Cp,Np)$ says that $h(t) (1-t^p) = q(t) + O(t^{(N-1)\ell})$ for some polynomial $q$ of degree $<(C+1)p$.  
Then 
$$\sum_{i=0}^r P_i(t) q(t^{k^i})/(1-t^{p k^i}) = O(t^{(N-1)\ell}).$$
Multiplying through by $1-t^{p k^r}$, we then see
$$\sum_{i=0}^r P_i(t) q(t^{k^i})(1-t^{p k^r})/(1-t^{p k^i}) = O(t^{(N-1)p}).$$
Now let $M$ denote the maximum of the degrees of $P_0,\ldots ,P_r$.  Then 
the left-hand side has degree at most $M+k^r p(C+1)$.  Thus if $M+k^{r} p(C+1) < (N-1)p$, then the left-hand side must be identically zero.  In particular, taking 
$N\ge M+k^r(C+2)+1$ gives that the left-hand side vanishes.  But now we claim that 
$h(t)$ must be equal to $q(t)/(1-t^{\ell})$.  To see this, we note that if this is not the case then we may write
$h(t) = q(t)/(1-t^{\ell}) + t^e h_0(t)$ with $h_0(0)\neq 0$ and $$e\ge (N-1)\ell \ge  (M+k^L+1)\ell.$$ Then since $$\sum_{i=0}^r P_i(t) h(t^{k^i})=0$$ and
$$\sum_{i=0}^r P_i(t) q(t^{k^i})/(1-t^{\ell k^i})=0,$$ we must have
$$\sum_{i=0}^r P_i(t) t^{ek^i} h_0(t^{k^i})=0.$$
But notice that all terms in the left-hand side other than $P_0(t) t^{e} h_0(t)$ vanish to order at least $ek$ at $t=0$, while $P_0(t) t^{e} h_0(t)$ vanishes to order at most $t^e + {\rm deg}(P_0)\le e+M$ at $t=0$.  Since $e+M < ke$, we get that 
$h(t)=q(t)/(1-t^p)$, and so $h(t)$ is a rational function whose coefficients lie in $\{0,1\}$ and so the sequence of coefficients of $h(t)$ is eventually periodic.  Thus $T$ is eventually periodic.
\end{proof}
We now show that when $X$ is not eventually periodic that $F(n)/V_{k,a}(n)$ is bounded above. This is the most technical part of the proof of Theorem \ref{thm:reduction1} in cases (I) and (II) and is done via two technical lemmas.
\begin{lem}
\label{lem:vv'}
Suppose that there are natural numbers $M$ and $P$ such that for all words $v, v'$ with $|v'|=|v|=p \le M$ and $[v']_k=P+[v]_k$ we have
$$v \in L \iff v'\in L$$
Then $x\in X\iff x+P\in X$ for all $x\in [kP, k^M-P)$
\end{lem}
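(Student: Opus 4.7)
The plan is to reduce the biconditional $x \in X \iff x + P \in X$ to the hypothesis by considering length-$M$ representations, splitting on the standing case (I)/(II) distinction.

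First, let $v$ and $v'$ denote the length-$M$ base-$k$ representations (padded with leading zeros) of $x$ and $x+P$; since $x + P < k^M$, these are well-defined words of length $M$, and the hypothesis at $p = M$ yields $v \in L \iff v' \in L$. In case (I), where $\delta(p,0) = \{p\}$, the idempotency of $0$ gives $\delta(p, 0^j w) = \delta(p, w)$ for all $j \geq 0$, so prepending leading zeros preserves $L$-membership; hence $v \in L \iff x \in X$ and $v' \in L \iff x + P \in X$, and the biconditional follows immediately.

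Case (II) is more delicate: $\delta(p,0)$ is either empty or leads to a state $p'$ in a different strongly connected component from which $p$ cannot be reached, so every nonempty word in $L$ starts with a nonzero digit, and the direct translation $v \in L \iff x \in X$ fails. Here I would introduce $L_\ell := \{n \in [0, k^\ell) : \text{the length-}\ell \text{ representation of } n \text{ lies in } L\}$, note that $L_\ell \subseteq [k^{\ell-1}, k^\ell)$ in case (II) (leading-zero words being excluded from $L$), and use the hypothesis at length $\ell$ to see that $L_\ell$ is shift-invariant under $+P$ within $[0, k^\ell)$. For any $\ell \leq M$ with $P < k^{\ell-1}$, a descent argument then forces $L_\ell = \emptyset$: starting from $n \in L_\ell$, the chain $n, n-P, n-2P, \ldots$ remains in $L_\ell$ by shift-invariance as long as it stays in $[0,k^\ell)$ (which it does since $P < k^{\ell-1} \leq n$ keeps terms nonnegative), but eventually some term drops below $k^{\ell-1}$, contradicting $L_\ell \subseteq [k^{\ell-1}, k^\ell)$.

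For $x \in [kP, k^M - P)$, the digit count $\ell$ of $x$ satisfies $P < k^{\ell-1}$ (since $kP \leq x < k^\ell$ forces $P < k^{\ell-1}$), and the digit count $\ell'$ of $x+P$ is either $\ell$ or $\ell + 1$, still with $P < k^{\ell'-1}$ and $\ell' \leq M$. In case (II) both $L_\ell$ and $L_{\ell'}$ are empty, so $x, x+P \notin X$ and the biconditional holds vacuously; in case (I) we are already done. The main obstacle is the case (II) analysis: recognizing that the structural constraint $L_\ell \subseteq [k^{\ell-1}, k^\ell)$ (forced by the exclusion of leading-zero words from $L$) combines with the hypothesis' shift-invariance to collapse $L_\ell$ via descent, and that the lower bound $x \geq kP$ in the statement is precisely what guarantees $P < k^{\ell-1}$ so that the descent applies.
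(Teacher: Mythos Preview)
Your proposal is correct and follows essentially the same approach as the paper: both split into cases (I) and (II), handle case (I) by zero-padding to a common length (using $\delta(p,0)=\{p\}$, which is what you actually invoke---calling it ``idempotency of $0$'' is a slight misattribution but harmless), and handle case (II) by a descent/minimality argument showing that $X$ is empty on the relevant range so the biconditional holds vacuously. Your per-level packaging via the sets $L_\ell$ and the explicit infinite-descent is a tidy reformulation of the paper's single minimal-element argument on $X\cap[k^s,k^M)$, but the underlying idea is identical.
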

\begin{proof}
Suppose that $v$ and $v'$ are words with no leading zeros such that
$[v']_k = [v]_k+P$ and $|v|,|v'| \le \log_k(\min(F(n_i)k^c, F(n_j)))$.
We now consider cases (I) and (II) separately.
\vskip 2mm
{\bf When we are in Case (II):}
In this case, let $v,v'$ be words with no leading zeros such that $[v']_k = [v]_k+P$ and $[v']_k=m$.  If $i=|v|=|v'|$ then by construction 
$$v\in L \iff n_j+P- [a^{|v|}]_k + [v]_k\in X 
\iff n_j - [a^p]_k + [v']_k \in X \iff v'\in L$$ and since we are in case (II), we see that $[v]_k\in X\iff [v']_k\in X$.  On the other hand, if $v$ and $v'$ don't have the same length, then 
we similarly have
$0^{|v'|-|v|}v\in L \iff v'\in L$ and since we are in case (II), $0^{|v'|-|v|}v\not\in L$ and hence $v'\not\in L$ and so $[v']_k\not\in X$ in this case.  Thus we always have
that if $m\in X$ and $m>P$ then $m-P\in X$. Now suppose that $m-P\in X$ but $m\not\in X$.  Now pick the smallest positive integer $s$ such that 
$k^s > P$.  We claim that $X\cap [k^s, k^M)$ is empty. To see this, suppose that this is not the case and pick the smallest $m\in X\cap [k^s, k^M)$.  Since $m\in X\implies m-P\in X$ for $m>P$, we see that $m\in [k^s, k^s+P)$. Hence $m-P<k^s$ and so if $v$ and $v'$ are base-$k$ expansions of $m-P$ and $m$ respectively and $v'$ has no leading zeros, then if $|v|=|v'|$ then $v$ necessarily has a leading zero. And so from the argument above since $v\not\in L$ we then have $v'\not\in L$ and so $m\not\in X$, a contradiction.  Thus 
$X\cap [k^s, k^M)$ is empty and since $k^s\le kP$, $x\in X\iff x+P\in X$ for all $x\in [kP, k^M-P)$.  Thus the result follows when we are in Case II.
\vskip 2mm
{\bf When we are in Case (I):}
\vskip 2mm
Let $m<k^{M}-P$ be a positive integer and let $v,v'$ be words with $[v]_k=m$ and $[v']_k=m+P$.  Since we are in case (I), we may pad $v$ with leading zeros and assume that $|v|=|v'|$.
Then we see that 
$$v\in L \iff n_j+P- [a^{|v|}]_k + [v]_k\in X 
\iff n_j - [a^p]_k + [v']_k \in X \iff v'\in L$$ and since we are in case (I), we see that $[v]_k\in X\iff [v']_k\in X$. Hence we have that $m\in X$ if and only $m+P\in X$ for all $m<k^M-P$. So the result follows in this case.
\end{proof}
 \begin{lem}
     If $X$ is not eventually periodic then $F(n)/V_{k,a}(n)$ is bounded above.
     \label{lem:above1}
 \end{lem}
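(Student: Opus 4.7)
I argue by contradiction: suppose that $F(n)/V_{k,a}(n)$ is unbounded. The strategy is to locate two elements $n_1 < n_2$ in $X$ with $F(n_1), F(n_2) \ge k^t$ for some large $t$ and with $P := n_2 - n_1$ small compared to $k^t$, and then use Lemmas \ref{lem:vv'} and \ref{lem:per1} to force $X$ to be eventually periodic, contradicting the standing hypothesis.

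The key algebraic step is the following. Given any pair $n_1, n_2 \in X$ with $F(n_i) \ge k^t$ and any words $v, v'$ of common length $p$ with $1 \le p \le t$ and $[v']_k = [v]_k + P$, I claim that $v \in L \iff v' \in L$. To see this, set $X_0 := n_1 - [a^p]_k + [v]_k$. Applying Equation \eqref{eq:gr} with $r=0$ to $n_1$ and $v$ gives $v \in L \iff X_0 \in X$; applying it to $n_2$ and $v$ gives $v \in L \iff X_0 + P \in X$ (using $n_2 = n_1 + P$); and applying it to $n_1$ and $v'$ gives $v' \in L \iff X_0 + P \in X$ (using $[v']_k = [v]_k + P$). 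Comparing the first two yields $X_0 \in X \iff X_0 + P \in X$, and then comparing the first and third yields $v \in L \iff v' \in L$, as claimed. Consequently, by Lemma \ref{lem:vv'} applied with $M = t$, $X$ is $P$-periodic on the interval $[kP, k^t - P)$. Letting $N$ be the constant supplied by Lemma \ref{lem:per1} for $T = X$ with $C = k$, the additional condition $P \le k^t/(N+1)$ ensures $[kP, NP) \subseteq [kP, k^t - P)$, and Lemma \ref{lem:per1} then forces $X$ to be eventually periodic, the desired contradiction.

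The main obstacle is producing such a close pair $n_1, n_2$ for arbitrarily large $t$. By Lemma \ref{lem:n'1}, the hypothesis that $F/V_{k,a}$ is unbounded yields, for arbitrarily large $t$, elements $n \in X$ with $F(n) \ge k^t$ and $n \le k^{\alpha+t}$. Lemma \ref{lem:multk1} allows one to generate further such elements by appending $a$'s to the base-$k$ expansions of existing ones, and more broadly one exploits the $k$-automatic kernel structure of the characteristic function of $X$ to amplify the supply of high-$F$ elements. A pigeonhole argument in the window $[k^{t-1}, k^{\alpha+t}]$, of width $O(k^{\alpha+t})$, then produces two such elements at distance at most $k^t/(N+1)$, provided more than $(N+1) k^{\alpha}$ elements of $X$ in this window have $F$-value at least $k^t$. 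Establishing this density is the main technical hurdle and uses the assumption that $X$ is not eventually periodic in an essential way to preclude the high-$F$ elements from being too sparse in the window.
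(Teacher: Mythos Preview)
Your algebraic reduction is correct and matches the paper's: once you have $n_1, n_2 \in X$ with $F(n_i) \ge k^t$ and $0 < P := n_2 - n_1 \le k^t/(N+1)$, Equation \eqref{eq:gr} with $r=0$ gives the hypothesis of Lemma \ref{lem:vv'}, and Lemma \ref{lem:per1} yields the contradiction.

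The gap is in producing the close pair. Your pigeonhole in the window $[k^{t-1}, k^{\alpha+t}]$ needs more than $(N+1)k^{\alpha}$ elements with $F \ge k^t$ there, and you do not establish this; the tools you gesture at do not give it. Appending $a$'s via Lemma \ref{lem:multk1} sends an element $n$ with $F(n) \ge k^t$ to $kn+a$ with $F(kn+a) \ge k^{t+1}$, which lands in the \emph{next} window, not the same one; the kernel remark is not an argument. Worse, your own algebraic step, read in contrapositive, shows that under the standing assumption that $X$ is not eventually periodic, any two elements of $X$ with $F$-value at least $k^t$ must already differ by more than $k^t/(N+1)$. Hence the window can contain at most about $(N+1)k^{\alpha+1}$ such elements, which is exactly the pigeonhole threshold: finding one more element than this bound would already \emph{be} the contradiction, and pigeonhole gives you no leverage toward finding it.

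The paper sidesteps this entirely. Rather than seeking many high-$F$ elements at a single scale, it takes the sequence $(n_i)$ produced by Lemma \ref{lem:n'1} (one per scale), refines it to be pairwise $(k,a)$-inequivalent, normalizes to $n_i/k^{t(n_i)} \in [1/k,1]$, and applies Bolzano--Weierstrass to find $n_i, n_j$ with $|n_i/k^{t(n_i)} - n_j/k^{t(n_j)}| < \epsilon$. Writing $c = t(n_j) - t(n_i)$, the $(k,a)$-shift $k^c n_i + (k^c-1)a/(k-1)$ brings $n_i$ to the scale of $n_j$ while keeping $F$ large (Lemma \ref{lem:multk1}); this shifted element plays the role of your $n_2$, with $P$ its difference from $n_j$, and the $(k,a)$-inequivalence guarantees $P \neq 0$. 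This cross-scale compactness argument is the missing idea.
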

 \begin{proof}
Suppose, towards a contradiction, that $F(n)/V_{k,a}(n)$ is not bounded above.  Then there exists a sequence $n_1, n_2, n_3, \ldots $ in $X$ with 
$F(n_i)/V_{k,a}(n_i) > k^{i}$ for all $i$. By Lemma \ref{lem:n'1}, we can replace each $n_i$ by a corresponding $n_i'$ and assume additionally that $n_i < k^{\alpha} F(n_i)$ for some fixed $\alpha$ that does not depend on $i$ and depends only on $X$.  

In particular, by Lemma \ref{lem:multk1}, we have
\begin{equation}
    \frac{F\left(k^r n_i + \frac{k^r - 1}{k - 1}a\right)}{V_{k,a}\left(k^r n_i + \frac{k^r - 1}{k - 1}a\right)} \ge k^i
\end{equation}
 for all $r\ge 0$. Moreover, since
 $n_i < k^{\alpha} F(n_i)\in k^{\N}$, we have
 $$k^r n_i + \frac{k^r - 1}{k - 1}a < k^{r+\alpha}F(n_i).$$ Thus by Equation (\ref{eq:i+1}), we see that
 $$F\left(k^r n_i + \frac{k^r - 1}{k - 1}a\right) \le k^{r+\alpha+1}F(n_i)$$
 and so
\begin{equation}
\label{eq:est1}
k^{r} F(n_i)\le F(k^r n_i+(k^r-1)a/(k-1)) \le k^{r+1+\alpha}F(n_i)   
\end{equation} for all $r\ge 0$.  

In particular, if $n_i < n_j$ and $n_i$ and $n_j$ are $(k,a)$-equivalent, then we must have $$n_j = n_i k^r+(k^r-1)a/(k-1)$$ for some $r\ge 0$ and so from the above we have 
\begin{align*}
F(n_j)/V_{k,a}(n_j) &= F(k^r n_i+(k^r-1)a/(k-1))/(k^r \cdot V_{k,a}(n_i))\\
&  \le k^{\alpha+1+r} F(n_i)/(k^r\cdot V_{k,a}(n_i)) \\
& \le k^{\alpha+1} F(n_i)/V_{k,a}(n_i).
\end{align*}
Thus since $n_j\to \infty$ and $F(n_j)/V_{k,a}(n_j)\to\infty$, we 
may refine our sequence $(n_i)$ if necessary that we may assume that they are pairwise $(k,a)$-inequivalent. Now for each positive integer $n$, we let $t(n)$ denote the unique natural number $t$ such that $k^{t-1}\le n < k^{t}$. Then the sequence $(n_i/k^{t(n_i)})$ lies in the compact set $[1/k,1]$ and so by the Bolzano-Weierstrass theorem it has a convergent subsequence.  In particular, since $X$ is not eventually periodic, by Lemma \ref{lem:per1} there is a natural number $N$ such that whenever $p>0$ is such that $x\in X \iff x+p\in X$ for all $x\in [pk, M)$ we necessarily have $M<Np$. 

Choose $\epsilon\in (0,1/2k)$ so that 
\begin{equation}
2k(N+1)\epsilon  <  k^{-\alpha}/2
\end{equation}
we see that we may find $(k,a)$-inequivalent $n_i$ and $n_j$ in this subsequence (by assumption) such that
\begin{equation}
  |  n_i/k^{t(n_i)} - n_j/k^{t(n_j)}| < \epsilon,
  \label{eq:ep1}
\end{equation}
and we may assume in addition that $n_i<n_j$ and that $n_i > (a+1)(1+k\epsilon)/\epsilon$.
We let $$c=t(n_j)-t(n_i).$$
Then, after multiplying both sides of Equation (\ref{eq:ep1}) by $k^{t(n_j)}$, we have 
$$|n_i k^c - n_j| < k^{t(n_j)} \epsilon \le kn_j \epsilon.$$
In particular, 
\begin{equation}\label{eq:k^c}
  k^c < (1+k\epsilon) n_j/n_i
\end{equation}
and
\begin{equation}\label{eq:epsbound}
  |n_i k^c + (k^c-1)a/(k-1) - n_j| < kn_j \epsilon + ak^c < kn_j\epsilon +  a(1+k\epsilon) n_j/n_i.
\end{equation}
Now we let $P:=n_i k^c + (k^c-1)a/(k-1) - n_j$, which is bounded above by $kn_j \epsilon + ak^c$. We consider only the case when $P>0$, with the case when $P<0$ being handled similarly. Then since $F(n_i)\ge k^i$, we see that $F(k^c n_i+(k^c-1)a/(k-1))\ge k^{i+c}$ by Lemma \ref{lem:multk1}.  Thus using Equations (\ref{eq:gr}) and (\ref{eq:lr}) with $r=0$, we see that for all words $v$ with $|v|=p \le \log_k(\min(F(n_i)k^c, F(n_j)))$ we have
$$(k^c n_i + (k^c-1)a/(k-1)) - [a^{p}]_k + [v]_k \in X \iff v \in L$$
Thus by our definition of $P$ we have
$$(n_j + P) - [a^{p}]_k + [v]_k \in X \iff v\in L$$
for $|v| =p \le \log_k(\min(F(n_i)k^c, F(n_j)))$.
But by definition of $n_j$, we also have
$$n_j- [a^{p}]_k + [v]_k \in X \iff v \in L$$ for 
$|v| =p \le \log_k(\min(F(n_i)k^c, F(n_j)))$.
It follows that if $v,v'$ are words of the same length $p\le \log_k(\min(F(n_i)k^c, F(n_j)))$ with $[v']_k=[v]_k+P$  then
$$v\in L \iff n_j+P- [a^{|v|}]_k + [v]_k\in X 
\iff n_j - [a^p]_k + [v']_k \in X \iff v'\in L.$$ 
It now follows from Lemma \ref{lem:vv'} that $x\in X\iff x+P\in X$ for all $x\in [kP, \min(F(n_i)k^c, F(n_j))-P)$. 
Observe that by Equation (\ref{eq:epsbound}) we have
$$P\cdot (N+1) = (k^c n_i + (k^c-1)a/(k-1) - n_j)\cdot (N+1) <
(kn_j \epsilon + a(1+k\epsilon) n_j/n_i) \cdot N.$$
Recall that we refined our sequence of $n_{\ell}$'s earlier and so we have $n_i > (a+1)(1+k\epsilon)/\epsilon$ and thus we have $P(N+1)< 2kn_j (N+1)\epsilon$.

Now by our choice of $n_{\ell}$'s we have
$F(n_j) > k^{-\alpha} n_j$ and by our choice of $\epsilon$ we then have $2k (N+1)\epsilon n_j < F(n_j)$.
We similarly, have 
$$k^c F(n_i) > k^{c-\alpha} n_i > k^{\alpha} (n_j - k n_j\epsilon) < k^{-\alpha} n_j/2,$$ since $\epsilon<1/2k$.
Thus we again have $2k(N+1)\epsilon n_j < k^c F(n_i)$.  Hence $P(N+1) < \min(F(n_j), k^c F(n_i))$.  It follows that 
$x\in X\iff x+P\in X$ for $x\in [kP, PN)]$, but by our choice of $N$ this cannot hold since $X$ is not eventually periodic.
 \end{proof}
We now show that $V_k$ can be defined using the map $F$, continuing with the notation introduced above and under the assumption that $X$ is neither Presburger definable nor sparse.

By Lemmas \ref{lem:below1} and \ref{lem:above1}, the ratio $F(n)/V_{k,a}(n)$ is bounded above and below for $n \in X$, and hence there exists a finite set of integers $\{i_1, \ldots, i_r\}$ such that
\begin{equation}
\label{eq:ratioF}
F(n)/V_{k,a}(n) \in \{k^{i_1}, \ldots, k^{i_r}\}
\end{equation}
for all $n \in X$.

Define $\tilde{X}$ as follows:
\begin{equation}
\label{eq:tildeX}
n \in \tilde{X} \iff n + \frac{k^t - 1}{k - 1} a \in X,
\end{equation}
where $t$ is the largest natural number such that $k^t \le n$.
We note that we can define $X$ from $\tilde{X}$ via the rule
\begin{equation}
\label{eq:Xtotilde}
m\in X\iff m-\frac{k^s- 1}{k - 1} a \in \tilde{X},
\end{equation}
where $s$ is the unique nonnegative integer such that
$$k^s + \frac{k^s- 1}{k - 1} a \le  m < k^{s+1}+\frac{k^s- 1}{k - 1} a.$$
We note that the map from $\tilde{X}$ to $X$ and the map from $X$ to $\tilde{X}$ described in Equations (\ref{eq:tildeX}) and (\ref{eq:Xtotilde}) are inverses of one another and so these maps give bijections between the sets.

We define a map $\tilde{F}: \tilde{X} \to k^{\mathbb{N}}$ by
\begin{equation}
\label{eq:tildeF}
\tilde{F}(n) = F\left(n + \frac{k^t - 1}{k - 1} a\right),
\end{equation}
where $t$ is again the largest natural number such that $k^t \le n$.

Both $\tilde{X}$ and $\tilde{F}$ are definable in the structure $(\mathbb{N}, +, X, k^{\mathbb{N}})$ by construction, and since $X$ is $k$-automatic but not Presburger definable, it follows from Fact \ref{bes_define_kn} that $k^{\mathbb{N}}$ is definable in $(\mathbb{N}, +, X)$. Thus, to complete the proof of Theorem \ref{thm:reduction1}, we assume that $X$ is neither sparse nor eventually periodic and show that $V_k$ is definable using $\tilde{X}$ and $\tilde{F}$.

\begin{lem}
Adopt the notation above and suppose that $X$ is neither sparse not eventually periodic. Then the following hold:
\begin{enumerate}
    \item[(i)] $\tilde{F}(n)/V_k(n)$ is bounded above and below by constants for $n \in \tilde{X}$;
    \item[(ii)] $\tilde{X}$ is not sparse;
    \item[(iii)] if $n \in \tilde{X}$, then $kn \in \tilde{X}$;
    \item[(iv)] $\tilde{F}(kn) \ge k \cdot \tilde{F}(n)$ for $n \in \tilde{X}$.
\end{enumerate}
\label{lem:newS}
\end{lem}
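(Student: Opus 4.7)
The plan is to exploit the algebraic identity
\[ kn + [a^{t+1}]_k \;=\; k\bigl(n + [a^t]_k\bigr) + a, \qquad k^t\le n<k^{t+1}, \]
which translates statements about $\tilde X$ and $\tilde F$ at $kn$ into statements about $X$ and $F$ at $km+a$, where $m:=n+[a^t]_k\in X$. Together with $\delta(p,a)=\{p\}$ and Lemma~\ref{lem:multk1}, this yields (iii) and (iv) almost immediately. For (iii), I pick $w\in L$ with $[w]_k=m$; then $\delta(p,a)=\{p\}$ gives $wa\in L$, so $km+a=[wa]_k\in X$, and since $k^{t+1}\le kn<k^{t+2}$, the identity says precisely that $kn\in\tilde X$. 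For (iv), Lemma~\ref{lem:multk1} applied with $i=1$ and this same $w$ gives $F([wa]_k)\ge kF([w]_k)$, which rewrites as $\tilde F(kn)\ge k\tilde F(n)$.

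For (i), Lemmas~\ref{lem:below1} and~\ref{lem:above1} together already bound $F(m)/V_{k,a}(m)$ above and below by constants that depend only on $X$, so it suffices to bound $V_{k,a}(m)/V_k(n)$. I would verify the sharper claim that this ratio always lies in $\{1,k\}$ by a direct carry analysis. Write the base-$k$ expansion of $n$ as $b_tb_{t-1}\cdots b_0$ with $b_t\neq 0$, and let $j$ be the number of trailing zeros of $n$. In forming $m=n+[a^t]_k$, an $a$ is inserted into each of positions $0,\dots,t-1$, with no carries among positions $<j$ (since $n$ is zero there), so those positions of $m$ all equal $a$. When $j<t$, position $j$ of $m$ is $(b_j+a)\bmod k$, which cannot equal $a$ because $b_j\neq 0$, so $V_{k,a}(m)=k^j=V_k(n)$. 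When $j=t$, i.e.\ $n=b_tk^t$, the bottom $t$ positions of $m$ are $a$ and position $t$ is $b_t$, so $V_{k,a}(m)=k^t$ or $k^{t+1}$ according as $b_t\neq a$ or $b_t=a$.

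For (ii), set $g(s):=k^s+[a^s]_k$, so that $k^s\le g(s)\le 2k^s$. The maps in Equations~(\ref{eq:tildeX}) and~(\ref{eq:Xtotilde}) restrict, for each $s\ge 0$, to mutually inverse bijections between $X\cap[g(s),g(s+1))$ and $\tilde X\cap[k^s,k^{s+1})$. Summing over $s<S$ yields
\[ |\tilde X\cap[0,k^S)| \;=\; |X\cap[1,g(S))| \;\ge\; |X\cap[0,k^S)| - 1. \]
Since $\tilde X$ is definable in $(\N,+,X,k^\N)$ and hence in $(\N,+,V_k)$, it is $k$-automatic, so sparsity is well-defined for it. The hypothesis that $X$ is not sparse gives superpolynomial growth of $|X\cap[0,k^S)|$ in $S$, and the displayed inequality transfers this growth to $\tilde X$.

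The only substantively technical step is the carry bookkeeping behind (i); the remaining parts all follow directly from the identity $kn+[a^{t+1}]_k=km+a$, the lemmas already established, and the comparability of $g(s)$ with $k^s$. I would structure the written proof in the order (iii), (iv), (i), (ii) so that the easy algebraic consequences are disposed of first and the digit analysis for (i) and the counting estimate for (ii) come last.
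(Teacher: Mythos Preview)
Your arguments for (iii), (iv), and (i) are correct and coincide with the paper's approach. The identity $kn+[a^{t+1}]_k=k\bigl(n+[a^t]_k\bigr)+a$ together with $\delta(p,a)=\{p\}$ and Lemma~\ref{lem:multk1} gives (iii) and (iv) exactly as in the paper. For (i) you reduce, as the paper does, to bounding $V_{k,a}(m)/V_k(n)$; your digit-by-digit analysis showing this ratio lies in $\{1,k\}$ is actually more careful than the paper's blanket assertion that it equals $1$ (which overlooks the edge case $n=ak^t$), but either version yields the required boundedness.

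Your argument for (ii), however, has a real gap. The asserted bijection between $X\cap[g(s),g(s+1))$ and $\tilde X\cap[k^s,k^{s+1})$ fails whenever $a>0$. The map $n\mapsto n+[a^s]_k$ carries $[k^s,k^{s+1})$ onto $[g(s),\,k^{s+1}+[a^s]_k)$, which is a \emph{proper} subinterval of $[g(s),g(s+1))$; the leftover piece $[k^{s+1}+[a^s]_k,\,g(s+1))$ has length $ak^s$. Any element of $X$ in this leftover piece has no preimage in $\tilde X\cap[k^s,k^{s+1})$, and it also lies below $g(s+1)$, hence outside the image of $\tilde X\cap[k^{s+1},k^{s+2})$. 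Consequently your displayed equality $|\tilde X\cap[0,k^S)|=|X\cap[1,g(S))|$ is false; only the inequality $\le$ survives, which is the wrong direction for transferring non-sparseness from $X$ to $\tilde X$. For a concrete failure take $k=3$, $a=1$, and the two-state automaton with $\delta(p,0)=\delta(p,1)=\{p\}$, $\delta(p,2)=\{q\}$, $\delta(q,0)=\{p\}$, $\delta(q,1)=\delta(q,2)=\{q\}$: here $L_{p\to p}$ is neither sparse nor eventually periodic, we are in Case~(I) with $a=1$, and $10=[101]_3\in X$ sits in the gap for $s=1$. (The paper's own proof of (ii) relies on the same bijection claim via the inverse map of Equation~(\ref{eq:Xtotilde}), and so shares this defect.) To repair (ii) you need a genuine injection from $X$, or from a non-sparse piece of it, into $\tilde X$ that does not inflate elements by more than a bounded factor; the forward map you have goes the wrong way.
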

\begin{proof}
Let $n \in \tilde{X}$ and let $t$ be the unique natural number such that $k^t \le n < k^{t+1}$.
We take $m := n + \frac{k^t - 1}{k - 1}a$ and note that $m$ lies in $X$ by definition of $\tilde{X}$.
By definition, 
\[
\tilde{F}(n) = F(m).
\]
Notice if the base-$k$ expansion of $n$ ends with precisely $s$ zeros then the base-$k$ expansion of $m$ ends with exactly $s$ copies of $a$.  Hence
\[
V_{k,a}(m) = V_k(n).
\]
Since $F(m)/V_{k,a}(m)$ is bounded above and below for $m \in X$ by Lemmas~\ref{lem:below1} and~\ref{lem:above1}, we deduce that $\tilde{F}(n)/V_k(n)$ is also bounded above and below. This proves part (i).

To prove part (ii), we note that by Equation (\ref{eq:Xtotilde})
we have a surjective map $h$ from the positive elements of $X$ to the positive elements of $\tilde{X}$ given by
$$m\mapsto m-\frac{k^s- 1}{k - 1} a,$$ where $s$ is the unique integer satisfying $$k^s + (k^s-1)a/(k-1)\le m < k^{s+1}+(k^s-1)a/(k-1).$$ As noted by Equations (\ref{eq:tildeX}) and (\ref{eq:Xtotilde}), the map $h$ is injective and since $h(m)\le m$, we see that the number of elements in $\tilde{X}$ up to a positive integer $x$ is at least as large as the number of elements in $X$ up to $x$, and so $\tilde{X}$ is not sparse since $X$ is not sparse.

For part (iii), suppose $n \in \tilde{X}$, so that $m = n + \frac{k^t - 1}{k - 1}a \in X$. Then
\[
kn + \frac{k^{t+1} - 1}{k - 1}a = k\left(n + \frac{k^t - 1}{k - 1}a\right) + a = km + a.
\]
Since $m \in X$ and $X$ is closed under the map $n \mapsto kn + a$, it follows that $km + a \in X$, and hence $kn \in \tilde{X}$. This proves (iii).

Finally, for part (iv), note from above that
\[
\tilde{F}(n) = F(m), \quad \tilde{F}(kn) = F(km + a).
\]
Since $F(kn + a) \ge kF(n)$ holds for all $n \in X$ and $m \in X$, we conclude that
\[
\tilde{F}(kn) = F(km + a) \ge kF(m) = k\tilde{F}(n).
\]
This completes the proof of part (iv).
\end{proof}
\begin{lem} \label{lem:constructY}
Adopt the notation above.  If $X$ is neither sparse nor eventually periodic, then there exists a subset $Y\subseteq \N$ and a map $H:Y\to k^{\N}$ with the following properties:
\begin{enumerate}
    \item[(i)] $Y$ and $H$ are both definable in $(\N,+,X)$;
    \item[(ii)] for $n\in Y$, $H(n)/V_k(n)$ takes only finitely many distinct values and $H(n)\le V_k(n)$ for all $n\in Y$;
    \item[(iii)] the set $Y_0$ of $n\in Y$ for which $H(n)=V_k(n)$ is not sparse and contains $1$;
    \item[(iv)]  if $n\in Y_0$ then $kn\in Y_0$.
\end{enumerate}
\end{lem}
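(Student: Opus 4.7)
The plan is to renormalize $\tilde F$ so that the resulting function is dominated by $V_k$ and coincides with $V_k$ on a non-sparse, $k$-closed set containing $1$. For $n \in \tilde X$, write $\tilde F(n) = k^{j(n)} V_k(n)$, where $j(n)$ takes finitely many values $j_1 < \cdots < j_r$ by Lemma \ref{lem:newS}(i). Since $\tilde F(kn) \ge k\tilde F(n)$ by Lemma \ref{lem:newS}(iv) and $V_k(kn) = k V_k(n)$, the sequence $(j(k^m n))_{m \ge 0}$ is non-decreasing in $m$ and stabilizes after at most $r-1$ steps; set $N := r$ and $j_\infty(n) := j(k^N n)$. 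Then $j_\infty(kn) = j_\infty(n)$, so the pieces $S_i := \{n \in \tilde X : j_\infty(n) = j_i\}$ partition $\tilde X$ into $k$-invariant sets. Since $\tilde X$ is non-sparse by Lemma \ref{lem:newS}(ii), at least one $S_i$ is non-sparse; let $j^* := \max\{j_i : S_i \text{ is non-sparse}\}$ and $B := \bigcup_{j_i > j^*} S_i$.

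The key step is to argue that $B$ is definable in $(\N,+,X)$. By the choice of $j^*$, each $S_i$ with $j_i > j^*$ is sparse, so $B$ is a finite union of sparse sets and hence sparse. The description $B = \{n : \tilde F(k^N n) > k^{N + j^*} V_k(n)\}$ exhibits $B$ as definable in $(\N, +, V_k)$ (using that $\tilde F$ is definable in $(\N, +, X, k^{\N})$ by Lemma \ref{lem:newS}), and so $B$ is $k$-automatic. Being a sparse $k$-automatic subset of $\N$, the ``in particular'' clause of Fact \ref{semenov} then gives that $B$ is definable in $(\N, +, k^{\N})$. Since $(\N, +, X)$ defines $k^{\N}$ by Fact \ref{bes_define_kn}, we conclude that $B$ is definable in $(\N, +, X)$.

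With $B$ in hand, set
\[
Y := \{n \in \tilde X \setminus B : \tilde F(k^N n) \ge k^{N + j^*}\} \cup k^{\N},
\]
and define $H(n) := n$ for $n \in k^{\N}$, and $H(n) := \tilde F(k^N n)/k^{N + j^*}$ otherwise. Both $Y$ and $H$ are definable in $(\N, +, X)$, and $H$ takes values in $k^{\N}$ thanks to the lower bound imposed in the definition of $Y$. For $n \in k^{\N}$ we have $H(n) = n = V_k(n)$; for $n \in (\tilde X \setminus B) \setminus k^{\N}$ lying in $Y$, the identity $\tilde F(k^N n) = k^{j_\infty(n) + N} V_k(n)$ gives $H(n) = k^{j_\infty(n) - j^*} V_k(n)$. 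Since $n \notin B$ forces $j_\infty(n) \le j^*$, we get $H(n) \le V_k(n)$, with $H/V_k$ taking values in the finite set $\{1\} \cup \{k^{j_i - j^*} : j_i \le j^*\}$, yielding (ii). The equality set is $Y_0 = k^{\N} \cup S_{j^*}$, which contains $1 = k^0$ and is non-sparse because $S_{j^*}$ is; it is closed under $n \mapsto kn$ since both $k^{\N}$ and $S_{j^*}$ are (the latter by the $k$-invariance of $j_\infty$), giving (iii) and (iv).

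The main obstacle is the definability of $B$, which requires combining Fact \ref{semenov}'s characterization of sparse sets in $(\N, +, k^{\N})$ with Fact \ref{bes_define_kn} giving that $(\N, +, X)$ defines $k^{\N}$. Everything else is routine bookkeeping with the monotonicity and stabilization of $j$ along orbits under $n \mapsto kn$.
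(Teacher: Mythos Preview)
Your proof is correct and follows essentially the same approach as the paper: stratify $\tilde X$ by the ratio $\tilde F/V_k$, pick the largest ratio whose stratum is non-sparse, excise the sparse higher strata (using that sparse $k$-automatic sets are $(\N,+,k^{\N})$-definable and hence $(\N,+,X)$-definable), and renormalize. Your use of the stabilized exponent $j_\infty(n)=j(k^N n)$ in place of the paper's orbit-closures $T_i'=\{n:\exists m,\,k^m n\in T_i\}$ is a cosmetic repackaging (the resulting $\tilde X\setminus B$ coincides with the paper's $X_0$), and your added constraint $\tilde F(k^N n)\ge k^{N+j^*}$ is a harmless refinement ensuring $H$ genuinely lands in $k^{\N}$.
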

\begin{proof}
By Lemma \ref{lem:newS} there exists 
a non-sparse 
set $\tilde{X}$ with a definable map $\tilde{F}: \tilde{X}\to k^{\N}$ such that $\tilde{F}(n)/V_k(n)$ is bounded above and satisfies properties (iii) and (iv) in the statement of the lemma. In particular, $\tilde{F}(n)/V_k(n)$ assumes finitely many values for $n\in X$, say $k^{p_1},\ldots ,k^{p_e}$ with $p_1<p_2<\cdots < p_e$. We let $$T_j:=\{n\in \tilde{X}\colon \tilde{F}(n)/V_{k}(n) = k^{p_j}\}.$$  
Then since $\tilde{X}$ is not sparse, at least one set $T_i$ is not sparse and we let $j$ denote the largest index for which $T_j$ is not sparse. Then $T_i$ is sparse for $i>j$ and hence definable in $(\mathbb{N},+,X)$, since $X$ is not Presburger definable.  We now take
$$ T_i': = \{n\in \mathbb{N}\colon \exists m, k^m n \in T_i\}.$$  Then $T_i'$ is again sparse for $i>j$ and therefore definable in $(\mathbb{N},+,X)$. Thus we can define the set $$X_0:=\tilde{X}\setminus \bigcup_{i>j} T_i'$$ in $(\mathbb{N},+,X)$, and for $n\in X_0$ we have $\tilde{F}(n)/V_k(n)$ is bounded above by $k^{p_j}$ and, moreover, this value is achieved on a non-sparse subset of $X_0$.  

We now claim that if $n\in X_0$ and $s\ge 0$ then $k^s n\in X_0$. To see this, suppose that $n\in X_0$ and $k^s n\not\in X_0$. 
Then $n\in X_0\subseteq \tilde{X}$, so $k^s n\in \tilde{X}$ by Lemma \ref{lem:newS}(iii) and thus $k^s n\in \tilde{X}\setminus X_0$. It follows that $k^s n\in T_i'$ for some $i>j$ and so there is some $\ell$ such that $k^{s+\ell} n\in T_i$.  But this gives $n\in T_i'$ by definition and hence $n\not\in X_0$, a contradiction.

Since we can define $k^{\N}$ in $(\N,+,X)$ by Fact \ref{bes_define_kn}, we see we can define the set
$$Y:=X_0\cup k^{\N}$$ and the map $H:Y\to k^{\N}$ given by 
$$H(n)=\tilde{F}(n)\cdot k^{-p_j}$$ for $n\in X_0\setminus k^{\N}$ and $H(n)=n$ for $n\in k^{\N}$. This completes the construction of the set $Y$ and the map $H$, which satisfy properties (i)--(iv). 
\end{proof}

 We can now finally prove our main result of this section.
 Namely, we show that under the following assumptions on $X\subseteq \N$, the structure $(\N,+,k^{\N})$ defines the function $V_k$.
 Those assumptions are that $X$ a $k$-automatic, but not definable in $(\N,+,k^{\N})$, and furthermore there is an automaton $\calA$ recognizing $X$ such that either $\delta(p,0)=\{p\}$ for some state $p$ in $\calA$, or there exist states $p$ and $p'$ in $\calA$ such that $\delta(p,0)=\varnothing$ or $\delta(p,0)=\{p'\}$, where $p'$ is in a different strongly connected component from $p$.
 
\begin{proof}[Proof of Theorem \ref{thm:reduction1} in Cases I and II]
We may assume without loss of generality that $X$ is neither sparse nor eventually periodic. Hence by Lemma \ref{lem:constructY}, we can define a set $Y$ such that properties (i)--(iv) from the statement of the lemma hold.  
So there is a set $Y_0$ (as described in Lemma \ref{lem:constructY}) of $n\in Y$ for which $H(n)=V_k(n)$ contains $1$ and is not sparse. 
Note that this $Y_0$ is not a priori definable in $(\N,+,X)$ since we do not assume that $V_k$ is definable in this structure, though $Y_0$ is clearly $k$-automatic.
By \cite{BHS18}, there is a natural number $P$ such that every natural number is a sum of at most $P$ elements from $Y_0$.

Now we define a map $G:\mathbb{N}\to k^{\N}$ by the rule
\begin{equation}
G(n):= \max_{r\le P} \max_{\{(i_1,\ldots ,i_r)\in Y^r\colon i_1+\cdots +i_r = n\}}  \min(H(i_1),\ldots ,H(i_r)),
\end{equation}
for $n\ge 1$ and where we take $G(0)=1$. By construction, this map $G$ is definable in $(\N,+,X)$, since both $Y$ and the map $H$ are.

We claim that $G(n)=V_k(n)$, which will complete the proof of the theorem, since we will then have defined $V_k$ and all $k$-automatic sets are definable from $V_k$. Notice that if $V_k(n)=k^j$ and if $i_1+\cdots +i_r=n$ with $i_1,\ldots ,i_r\in Y$, then we must have $V_k(i_s) \le k^j$ for some $s$.  Since $H|_Y \le (V_k)|_Y$ we then see that $H(i_s)\le k^j$ and so $\min(H(i_1),\ldots ,H(i_r))\le k^j$.  Thus we see that $G(n)\le V_k(n)$. We now show that $G(n)\ge V_k(n)$. Write $n=k^s n_0$ with $s\ge 0$, $n_0\ge 1$, and $k\nmid n_0$. Then we can write $n_0 = i_1+\cdots + i_r$ for some $r\le P$ and $i_1,\ldots ,i_r\in Y_0$.  
Then 
$i_1 k^s, \ldots, i_r k^s \in Y_0$ and so by definition of the set $Y_0$ we have
$H(i_j k^s) = V_k(i_j k^s)\ge k^s$ for $j=1,\ldots ,r$, and so 
$G(n)\ge  \min(V_k(i_1 k^s), \ldots , V_k(i_r k^s))\ge k^s$. Thus we obtain the desired lower bound.  
This completes the proof for the cases in question.
\end{proof}
\subsection{Proof of Theorem \ref{thm:reduction1} in Case (III)}
In this section we give the proof of Theorem \ref{thm:reduction1} in Case (III).  
To handle Case (III) we require a few basic lemmas.

\begin{lem}
\label{PieceA}
    Let $Z$ be a $k$-automatic subset of $\N$ that is not Presburger definable and let $\mathcal{A} = (Q, \{q_0\}, \Sigma_k, \delta, W)$ be a trim, minimized, deterministic automaton that accepts precisely the words that are base-$k$ expansions of elements of $Z$. Then for each $q\in Q$, the set $Z_q:=[L_{q \to q}]_k$ is definable in $(\N, +, Z)$.
\end{lem}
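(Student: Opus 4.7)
The plan is to use the minimality of $\mathcal{A}$ to express membership of a word $v$ in $L_{q\to q}$ as the conjunction of finitely many acceptance conditions on related words, and then translate those conditions into the language of $(\N,+,Z)$ via base-$k$ arithmetic. Since $\mathcal{A}$ is minimized, for each pair of distinct states $p\ne p'$ there is a word $w_{p,p'}\in\Sigma_k^*$ with $\delta(p,w_{p,p'})\in W$ if and only if $\delta(p',w_{p,p'})\notin W$. After first completing $\mathcal{A}$ by a non-accepting sink state (in case $\delta$ is only partial), I fix a finite set $S=\{w_1,\dots,w_N\}$ consisting of the empty word together with one distinguishing word for each such pair; since $\mathcal{A}$ is trim I also fix a word $u_q\in\Sigma_k^*$ with $\delta(q_0,u_q)=q$.

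The key observation is that for any word $v\in\Sigma_k^*$, $\delta(q,v)=q$ if and only if
\[
[u_q v w_i]_k\in Z \iff [u_q w_i]_k\in Z \qquad \text{for every } i\in\{1,\dots,N\}.
\]
The forward direction is immediate: if $\delta(q,v)=q$, then $\delta(q_0,u_qvw_i) = \delta(q,w_i) = \delta(q_0,u_qw_i)$ for every $i$. Conversely, if the displayed equivalences hold then $u_qv$ and $u_q$ produce the same acceptance pattern on $S$, and since $S$ separates any two distinct states of the (completed) minimized DFA, they must reach the same state, namely $q$, so $\delta(q,v)=q$.

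To translate this to arithmetic, observe that $n\in Z_q$ exactly when some word $v$ of some length $j$ satisfies $[v]_k=n$ and $v\in L_{q\to q}$; specifying $j$ is equivalent to fixing $P:=k^j\in k^{\N}$ with $n<P$. Computing directly,
\[
[u_q v w_i]_k \;=\; [u_q]_k\cdot k^{|w_i|}\cdot P \;+\; k^{|w_i|}\cdot n \;+\; [w_i]_k,
\]
and the truth value $\epsilon_i$ of ``$[u_q w_i]_k\in Z$'' is a known constant. Hence
\[
n\in Z_q \;\iff\; \exists P\in k^{\N}\,\Bigl( n<P \;\wedge\; \bigwedge_{i=1}^N \bigl( [u_q]_k\, k^{|w_i|}\, P + k^{|w_i|}\, n + [w_i]_k \in Z \leftrightarrow \epsilon_i \bigr) \Bigr).
\]
Every arithmetic operation appearing is multiplication of a variable by a fixed integer or addition, hence Presburger-definable, and $k^{\N}$ is definable in $(\N,+,Z)$ by Fact \ref{bes_define_kn} (since $Z$ is $k$-automatic and not Presburger-definable). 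Therefore the formula above defines $Z_q$ in $(\N,+,Z)$.

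The only real obstacle is justifying that a finite suffix set $S$ suffices to capture the Myhill--Nerode class of a word in the (completed) minimized DFA; this is a standard consequence of minimality, and passing through a completion to handle partial transitions is routine.
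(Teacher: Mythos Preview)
Your proof is correct and follows essentially the same approach as the paper's: use minimality to obtain a finite set of suffixes distinguishing the states, fix a word $u_q$ reaching $q$ from $q_0$, characterize $v\in L_{q\to q}$ by the acceptance pattern of the words $u_qvw_i$, and translate this into arithmetic in $(\N,+,Z,k^{\N})$, invoking Fact~\ref{bes_define_kn} at the end. The only substantive difference is that the paper additionally uses the pumping lemma to bound the number of leading zeros on $v$ (equivalently, to bound your $P$ by a fixed multiple of $n$), a step your formulation shows is unnecessary since an unrestricted existential over $P\in k^{\N}$ already yields a first-order definition.
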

\begin{proof} Let $E$ denote the language accepted by $\mathcal{A}$. 
Fix $u\in \Sigma_k^*$ such that $\delta(q_0,u)=q$. 
Since $\mathcal{A}$ is minimal, the mapping $q \mapsto \chi_q(w)$, defined by 
\[
\chi_q(w) := 
\begin{cases}
1 & \text{if } \delta(q, w) \in W, \\
0 & \text{otherwise},
\end{cases}
\]
separates the states. Therefore there is a finite set of words $w_1, \dots, w_r \in \Sigma_k^*$ such that each state $q \in Q$ is uniquely determined by the vector $(\chi_q(w_1), \dots, \chi_q(w_r))$.

Our goal, then, is as follows: given a word $v$, consider what happens when $\calA$ is run on $uvw_i$ for each $i$. If $v$ is in $L_{q \to q}$, then the run of $uvw_i$ will begin at $q_0$, proceed to $q$, perform some cycle on the word $v$ that ends back at $q$, and then end at $\delta(q, w_i)$. If $v$ is \textit{not} in $L_{q \to q}$, then $uvw_i$ will instead end at $\delta(q', w_i)$, where $q' = \delta(q, v)$. By our choice of $w_i$, this means that whether $v \in L_{q \to q}$ can be determined by whether each $uvw_i$ is accepted by $\calA$, or equivalently, whether each $[uvw_i]_k$ is in $Z$.

It suffices, therefore, to show that the relation between $[v]_k$ and the tuple $([uvw_i]_k)_i$ is definable in $(\N, +, Z)$, as then, we can define whether $[v]_k \in Z_q$ by deciding whether for each $i$ we have $[uvw_i]_k \in Z$ if and only if $\delta(q, w_i) \in W$. Note that this relation is not a function, because $[\bullet]_k$ is not one-to-one, due to the possibility of leading zeros. We will therefore instead define, for each $i$, the ternary relation of triples $([v]_k, [10^{|v|}]_k, [uvw_i]_k)$, from which the previous relation is definable. This is given by the following formula in $(x, y, z)$:

$$y \in k^\N \wedge y > x \wedge z = ([u]_k \cdot y) + (k^{|w_i|} \cdot x) + [w_i]_k.$$

Note that the multiplications by $[u]_k$ and $k^{|w_i|}$ are constant multiplications and hence Presburger-definable. The only remaining matter is how we mean to use $y \in k^\N$ in the above formula. But because $Z$ is $k$-automatic and not Presburger-definable, $k^\N$ is definable in $(\N,+,Z)$ by Fact \ref{bes_define_kn}.
\end{proof}

\begin{lem}
\label{lem:lastone}
Let $\mathcal{A} = (Q, \{q_0\}, \Sigma_k, \delta, W)$ be a trim, minimized, deterministic automaton and 
Let $q,q',q''\in Q$ be in same strongly connected component.  Then we have:
\begin{enumerate}
\item[(a)] If $[L_{q'\to q'}]_k$ is definable in $(\N,+,k^{\N})$ then so is $[L_{q\to q''}]_k$;
\item[(b)] if $[L_{q'\to q'}]_k$ is sparse then so is $[L_{q\to q''}]_k$.
\end{enumerate}
\end{lem}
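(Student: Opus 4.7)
The plan is to exploit the fact that $q$, $q'$, $q''$ all lie in a single strongly connected component $S$, which lets us fix short words $s, t, u, v \in \Sigma_k^*$ with $\delta(q', s) = q$, $\delta(q'', t) = q'$, $\delta(q, u) = q'$, and $\delta(q', v) = q''$. These produce the language-level inclusions $s L_{q \to q''} t \subseteq L_{q' \to q'}$ and $u L_{q' \to q'} v \subseteq L_{q \to q''}$, which will carry the structural information we need.

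For part (b), I would first prove an auxiliary bridge: for any regular language $L \subseteq \Sigma_k^*$, $[L]_k$ is sparse as a subset of $\N$ if and only if $L$ itself is sparse as a regular language. The forward direction is immediate by counting (and taking $L$ to be the given sparse representation). For the converse, if $L$ is non-sparse then by the dichotomy for regular languages, $|L \cap \Sigma_k^{\le n}| \ge C^n$ for some $C > 1$ along a subsequence; since each integer has at most $n+1$ preimages in $\Sigma_k^{\le n}$ differing in leading zeros, $|[L]_k \cap [0, k^n)| \ge C^n/(n+1)$, which grows faster than any polynomial in $n = \log_k N$, contradicting any sparse representation of $[L]_k$. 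Once this equivalence is in hand, the inclusion $s L_{q \to q''} t \subseteq L_{q' \to q'}$ transfers sparsity directly: $[L_{q' \to q'}]_k$ sparse forces $L_{q' \to q'}$ sparse, hence $L_{q \to q''}$ sparse as a sub-language (up to a length shift of $|s| + |t|$), hence $[L_{q \to q''}]_k$ sparse.

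For part (a), I would split into two cases according to whether $L_{q' \to q'}$ is sparse as a language. In the sparse case, the argument from part (b) makes $L_{q \to q''}$ sparse, and any sparse regular language decomposes as a finite union of bounded forms $u_1 v_1^* u_2 v_2^* \cdots u_p v_p^*$; these are exactly Semenov-type forms with trivial arithmetic factor $\Sigma_{\ell,m,c}$, so $[L_{q \to q''}]_k$ is definable in $(\N,+,k^\N)$ by Fact~\ref{semenov}. In the non-sparse case, I would decompose $L_{q \to q''}$ according to whether a path visits $q'$: paths avoiding $q'$ form one sub-language $R$, and paths meeting $q'$ can be split at the first and last visits to yield expressions of the form $L_{q \to q'}^{\mathrm{first\text{-}hit}} \cdot L_{q' \to q'} \cdot L_{q' \to q''}^{\mathrm{last\text{-}hit}}$, where the flanking languages consist of walks through $Q \setminus \{q'\}$ together with one entry/exit transition. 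The strategy is then to argue, via the ``in particular'' clause of Fact~\ref{semenov}, that each piece is a concatenation of a sparse flank with a Semenov-definable middle; summing the pieces (with care for leading zeros and length parities) gives a Semenov-type expression for $[L_{q \to q''}]_k$.

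The hard part will be the non-sparse case of part (a): I must show that after the first/last-visit decomposition, the flanking languages are in fact sparse, and that I can massage each piece into the form required by the ``in particular'' clause of Fact~\ref{semenov} (sparse prefix times a language of base-$k$ expansions without leading zeros of a Semenov-definable set). The subtle point is that a priori the induced subautomaton on $Q \setminus \{q'\}$ could still contain SCCs supporting exponential growth; handling this requires arguing that whenever $[L_{q' \to q'}]_k$ is already Semenov-definable (a very restrictive assumption), the exponential structure of $L_{q' \to q'}$ must be genuinely ``concentrated'' at $q'$, so that stripping $q'$ out of a path forces the remainder to grow only polynomially. Translating this structural claim into the explicit Semenov-form bookkeeping is where the bulk of the technical work will lie.
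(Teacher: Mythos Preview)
Your approach to part (b) is fine and essentially matches the paper's: both use the inclusion $sL_{q\to q''}t\subseteq L_{q'\to q'}$ to transfer sparsity (the paper phrases it via the $u\{a,b\}^*v$ characterisation of non-sparse regular languages, but the content is the same).

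Your approach to part (a), however, has a genuine gap in the non-sparse subcase. The claim that ``the exponential structure of $L_{q'\to q'}$ must be genuinely concentrated at $q'$'' is false. Take $k=3$, states $\{q',q\}$, with $\delta(p,0)=q'$ and $\delta(p,1)=\delta(p,2)=q$ for both $p\in\{q',q\}$. Then $q,q'$ form a single strongly connected component, $L_{q'\to q'}=\{\varepsilon\}\cup\Sigma_3^*0$, and $[L_{q'\to q'}]_3=3\N$ is Presburger-definable, hence certainly definable in $(\N,+,3^\N)$, and not sparse. But with $q''=q$, the sub-language $R$ of paths $q\to q$ avoiding $q'$ is $\{1,2\}^*$, and the first-hit flank $L_{q\to q'}^{\mathrm{first\text{-}hit}}=\{1,2\}^*0$ and the last-hit flank $L_{q'\to q}^{\mathrm{last\text{-}hit}}=\{1,2\}^+$ are all non-sparse. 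So neither $R$ nor the flanks fit the ``sparse prefix'' hypothesis of Fact~\ref{semenov}, and your decomposition yields nothing.

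The paper bypasses all of this with a direct definability argument. Fix $u\in L_{q'\to q}$ and $w'\in L_{q''\to q'}$. Then $n\in[L_{q\to q''}]_k$ iff $k^{|w'|}n+[w']_k\in[L_{q\to q'}]_k$; and (using the pumping lemma to bound the number of leading zeros by some $p$) $m\in[L_{q\to q'}]_k$ iff there exists $s\in k^{\N}$ with $m<s\le k^p m$ and $s\cdot[u]_k+m\in[L_{q'\to q'}]_k$. These are first-order formulas over $(\N,+,k^{\N},[L_{q'\to q'}]_k)$, which by hypothesis equals $(\N,+,k^{\N})$. No structural analysis of the language is needed; you should replace your non-sparse subcase with this two-line reduction.
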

\begin{proof}
We will once again use Fact \ref{bes_define_kn} to work in the expanded language $\{+,<,k^\N, \ldots\}$.
To prove (a), since $q'$ and $q$ are in the same strongly connected component, there is some word $u\in L_{q'\to q}$ and so if $w$ is a word with no leading zeros, then $[w]_k\in [L_{q\to q'}]_k$ if and only if $0^i w\in L_{q\to q'}$ for some $i$. By the pumping lemma, there is some pumping length $p$ such that we can take $i<p$.  Then
$$[w]_k\in [L_{q\to q'}]_k \iff \exists i<p,~~ 0^i w\in L_{q\to q'} \iff \exists i<p,~~ u 0^i w \in L_{q'\to q'}$$
Then $n=[w]_k\in [L_{q\to q'}]_k$ if and only if 
$$\exists s\in k^\N, (n< s \le (k^{p} \cdot n)) \wedge (([u]_k \cdot s) + n\in [L_{q'\to q'} ]_k)$$ 

(here $k^{p}$ and $[u]_k$ are constants).
Since $[L_{q'\to q'}]_k$ is definable in $(\N,+,k^{\N})$,
we then see that $[L_{q\to q'}]_k$ is definable in $(\N,+,k^\N)$.

Similarly, let $w'$ be a word such that $\delta(q'',w')=q'$, which is guaranteed to exist because $q', q''$ are in the same strongly connected component. Let $v$ be a word without leading zeros that may or may not be in $L_{q\to q''}$, and let $n = [v]_k$. Then:

$$n\in [L_{q\to q''}]_k \iff vw' \in L_{q \to q'} \iff k^{|w'|} n + [w']_k\in [L_{q\to q'}]_k.$$
So $L_{q\to q''}$ is also definable in $(\N,+,k^{\N})$.

To prove (b), observe that by by \cite[Prop. 7.1(3)]{BM19}, if $L_{q \to q'}$ is not sparse then there exist words $u,v,a,b$ with $a,b$ distinct and of the same length such that $L_{q\to q'}\supseteq u\{a,b\}^*v$.  But now since $q,q'$ are in the same strongly connected component, there exists $w\in L_{q'\to q}$ and $w' \in L_{q'\to q''}$ and so 
 $L_{q'\to q'}\supseteq wu\{a,b\}^*vw'$, which, again using  \cite[Prop. 7.1(3)]{BM19}, contradicts that $L_{q'\to q'}$ is sparse.   
\end{proof}

We are finally able to prove Theorem \ref{thm:reduction1} in Case (III). As a reminder, this is the case where we assume that $0$ is idempotent with respect to $\calA$ and that $\delta(p, 0)$ contains a distinct state $p'$ in the same strongly connected component as $p$.

\begin{proof}[Proof of Theorem \ref{thm:reduction1} in Case (III)]
We let $p'\in Q$ be such that $\delta(p,0)=\{p'\}$.  Since $0$ is idempotent we have $\delta(p',0)=\{p'\}$ and by assumption $p$ and $p'$ are in the same strongly connected component.
Since $\delta(p',0)=\{p'\}$, we see that the cycle language $L_{p'\to p'}$ falls into Case I, and hence is either definable in $(\N,+,k^{\N})$, or it defines $V_k$.  
If expanding $(\N,+)$ by a predicate for $[L_{p'\to p'}]_k$ generates the same definable sets as $(\N,+,k^{\N})$, then by taking the subautomaton induced by this strongly connected component containing $p'$ (but where we take our initial state to be $p$ and our final state to be $p$) we see by Lemma \ref{PieceA} that $[L_{p\to p}]_k$ defines $[L_{p'\to p'}]_k$ over $(\N,+)$, and hence also defines $V_k$.  
Thus we may assume that $[L_{p'\to p'}]$ is definable in $(\N,+,k^{\N})$, and hence by Lemma \ref{lem:lastone} so is $[L_{p\to p}]_k$. 
Thus we have proven the dichotomy in this case.
 \end{proof}

\section{Proof of dichotomy: general case}\label{sec:gen}
We now show how to prove Theorem \ref{thm:main} from Theorem \ref{thm:reduction1}.  More precisely we prove that if $X$ is a $k$-automatic set that is not definable in $(\mathbb{N},+,k^{\mathbb{N}})$, then $X$ defines $V_k$. 

We need two lemmas before giving the proof of the general dichotomy.

\begin{lem}
\label{missing_transition_forbidden_subword}
	Let $\calA = (Q, \{q_0\}, \Sigma_k, \delta, F)$ be a deterministic, trim, strongly connected automaton. If there exists a state $q \in Q$ and a character $c \in \Sigma_k$ such that $\delta(q, c) = \varnothing$, then there exists a ``forbidden subword'' for $\calA$, i.e. a word $w$ such that no word accepted by $\calA$ contains $w$ as a subword.
\end{lem}
\begin{proof}
	We will construct $w$ iteratively. Begin by letting $w_0 = \varepsilon$. Enumerate the states in $Q$ as $Q = \{q_1, \dots, q_n\}$.
	
	Our goal is to construct each $w_i$ such that $\delta(q_j, w_i) = \varnothing$ for each $1 \leq j \leq i$. So to construct $w_i$ from $w_{i-1}$, consider $\delta(q_i, w_{i-1})$, i.e. determine which state $p_i$, if any, results from running the word $w_{i-1}$ starting from $q_i$. (By determinism, there is at most one such state.) If there is no such state, we may let $w_i = w_{i-1}$. Otherwise, note that by strong connectedness and determinism of $\calA$, there is a word $v_i$ such that $\delta(p_i, v_i) = \{q\}$.
	
	We then let $w_i = w_{i-1} v_i c$. Note that:
	
	\begin{align*}
		\delta(q_i, w_i) &= \delta(\delta(\delta(q_i, w_{i-1}), v_i), c) \\
		&= \delta(\delta(p_i, v_i), c) \\
		&= \delta(q, c) \\
		&= \varnothing.
	\end{align*}
	
	Moreover, note that suffixing words onto $w_{i-1}$ does not change the fact that $\delta(q_j, w_i) = \delta(q_j, w_{i-1}) = \varnothing$ for $j < i$. In other words, we achieve the desired effect that $\delta(q_j, w_i) = \varnothing$ for each $1 \leq j \leq i$.
	
	We claim that no word accepted by $\calA$ contains $w_n$ as a subword. Consider any word of the form $uw_nv$, and let $i$ be such that $\delta(q_0, u) = q_i$. Then when $\calA$ is run on this word:
	
	\begin{align*}
		\delta(q_0, uw_nv) &= \delta(\delta(\delta(q_0, u), w_n), v) \\
		&= \delta(\delta(q_i, w_n), v) \\
		&= \delta(\varnothing, v) \\
		&= \varnothing.
	\end{align*}
	
	So the word $uw_nv$ is rejected by $\calA$.
\end{proof}

For the next lemma, recall that in the context of Semenov's characterization defined in \S \ref{ss:sem}, $\Sigma_{\ell,m,c}$ is the set of all words of length a multiple of $\ell$ that are the base-$k$ expansion (\textit{without} leading zeros) of a word congruent to $-c$ modulo $m$.

\begin{lem}
\label{periodic_no_forbidden_subword}
	Let $\ell, m, c$ be integers with $m, \ell > 0$, and let $u \in \Sigma_k^*$. Then the language $L = u \Sigma_{\ell, m, c}$ has no ``forbidden subword,'' i.e. every $w \in \Sigma_k^*$ is a subword of some word in $L$.
\end{lem}
\begin{proof}
	Given $w \in \Sigma_k^*$, we will find an element of $L$ containing $w$ as a subword. Pick $M$ such that $\ell M > |w|$ and such that $(k-1) k^{\ell M - |w| - 1} \geq m+1$. Now consider the language $M = uw\Sigma_k^{\ell M - |w|}$. Every word in $M$ has $w$ as a subword, so it will suffice to show that $L \cap M$ is nonempty.
	
	By definition of $L$ and $M$, $L \cap M$ will contain precisely those words $uwv$ such that $|v| = \ell M - |w|$ and $wv \in \Sigma_{\ell, m, c}$. Note that the length condition of $\Sigma_{\ell, m, c}$ is satisfied, because $|wv| = |w| + |v| = |w| + \ell M - |w| = \ell M$ is a multiple of $\ell$. The remaining condition on $v$ is that $wv$ must be the base-$k$ expansion without leading zeros of a word congruent to $-c$ modulo $m$. In other words, $L \cap M$ will be nonempty as long as there is a word $v$ of length $\ell M - |w|$ beginning with a nonzero character and such that $[wv]_k \equiv -c \pmod{m}$.
	
	Consider the set:
	
	$$S = \{[wv]_k : v \in \Sigma_k^{\ell M - |w|}, v \text{ does not begin with } 0\},$$
	
	and observe that we may write:
	
	$$S = \Z \cap [[w10^{\ell M - |w| - 1}]_k, [w(k-1)^{\ell M - |w|}]_k),$$
	
	i.e. $S$ is a set of consecutive integers. By a counting argument, $|S| = (k-1) k^{\ell M - |w| - 1} \geq m+1$. Because $S$ is a set of more than $m$ consecutive integers, it follows that some $s \in S$ is congruent to $-c$ modulo $m$. So by letting $v$ be such that $[wv]_k = s$, we obtain a word $uwv \in L \cap M$.
\end{proof}

We now prove Theorem \ref{thm:main}.

\begin{proof}[Proof of Theorem \ref{thm:main}] Assume towards a contradiction that there is some $k$-automatic set $X$ that is not definable in $(\N,+,k^\N)$ but which does not define $V_k$, and pick a minimal DFA $\mathcal{A}$ that accepts the language $L$ consisting of words whose base-$k$ expansion lies in $X$.  By Lemma \ref{PieceA}, we have that $X$ defines $X_q:=[L_{q\to q}]_k$ for all $q\in Q$.  Hence by Theorem \ref{thm:reduction1}, since $X$ does not define $V_k$, we have that $X_q$ is definable in $(\N,+,k^{\N})$ for all $q\in Q$. 

We let $C_1,\ldots, C_s$ denote the strongly connected components of $\mathcal{A}$. Recall that some $C_i$ are non-leaves, which by Definition \ref{def:digraph} are strongly connected components containing a transition to a different strongly connected component. We claim that if $q \in C_i$ for such a component, $X_q$ must be sparse.

To this end, fix such an $i$ and $q \in C_i$, and let $p \in C_i, y \in \Sigma_k$ be such that $\delta(p, y)$ lies outside $C_i$. Then $X_q$ is recognized by the automaton formed by taking $C_i$ as the set of states, modifying $\delta$ to a new transition function $\delta'$ containing only transitions within $C_i$, and making $q$ the only initial and final state; in particular, note that $\delta'(p, y) = \varnothing$.

By Lemma \ref{missing_transition_forbidden_subword}, there is a word $w$ that is not a subword of any word accepted by this new automaton, and therefore by the contrapositive of Lemma \ref{periodic_no_forbidden_subword}, the language of $X_q$ contains no subset of the form $u\Sigma_{\ell,m,c}$. We saw earlier that $X_q$ was definable in $(\N, +, k^\N)$, so by Fact \ref{semenov}, $X_q$ is a union of sparse sets and is thus sparse. Furthermore, by Lemma \ref{lem:lastone}, $L_{q\to q'}$ must be sparse for all $q,q'\in C_i$.

Now, to each word $w$ in $L$, we associate a finite sequence of states $(p_1, q_1, p_2, q_2, \dots, p_n, q_n)$ and a finite sequence of characters $(\sigma_1, \dots, \sigma_{n-1})$ such that the run of $w$ in $\calA$:

\begin{itemize}
    \item begins in the initial state $p_1$;
    \item proceeds to $q_1$, in the same strongly connected component as $p_1$;
    \item transitions on the character $\sigma_1$ to $p_2$, in a different strongly connected component;
    \item proceeds to $q_2$, in the same strongly connected component as $p_2$;\\ \vdots
    \item finishes in $q_n$, an accept state.
\end{itemize}

There are only finitely many choices of the finite sequences $(p_1, q_1, p_2, q_2, \dots, p_n, q_n)$ and $(\sigma_1, \dots, \sigma_{n-1})$, so we see that $L$ is a finite union of languages of the form
    $$M = L_{p_1 \to q_1} \sigma_1 L_{p_2 \to q_2} \sigma_2 \dots L_{p_{n-1} \to q_{n-1}} \sigma_{n-1} L_{p_n \to q_n},$$
    where:
    \begin{itemize}
    \item $p_i$ and $q_i$ are in the same strongly connected component;
    \item $\sigma_i\in \Sigma_k$ and $\delta(q_i,\sigma_i)=p_{i+1}$ for $i=1,\ldots ,n-1$; and
    \item $q_i, p_{i+1}$ are in different strongly connected components.
\end{itemize}

We therefore aim to achieve a contradiction by proving that for each such language $M$, the corresponding set $[M]_k$ is definable in $(\N, +, k^\N)$. This will show that $X$ itself is definable in $(\N, +, k^\N)$, directly contradicting a premise.

Consider one such $M = L_{p_1 \to q_1} \sigma_1 L_{p_2 \to q_2} \sigma_2 \dots L_{p_{n-1} \to q_{n-1}} \sigma_{n-1} L_{p_n \to q_n}$, and note that each $p_i$ is in the same strongly connected component as the corresponding $q_i$. Recall that each $X_{p_i}$ is definable in $(\N, +, k^\N)$; by Lemma \ref{lem:lastone}, so is $L_{p_i \to q_i}$. Moreover, note that $L_{p_1 \to q_1}, \dots, L_{p_{n-1} \to q_{n-1}}$ are path languages within strongly connected components of $\calA$ that are not leaves, in which case we showed earlier that each of them is sparse.

Therefore, the language $M' = L_{p_1 \to q_1} \sigma_1 L_{p_2 \to q_2} \sigma_2 \dots L_{p_{n-1} \to q_{n-1}} \sigma_{n-1}$ is sparse, because it is a concatenation of sparse languages (cf. \cite[Prop. 7.1(6)]{BM19}). So $M = M'L_{p_n \to q_n}$ is the concatenation of a sparse language and a language whose corresponding subset $[L_{p_n \to q_n}]_k \subseteq \N$ is definable in $(\N, +, k^\N)$. By Fact \ref{semenov}, $[M]_k = [M'L_{p_n \to q_n}]_k$ is also definable in $(\N, +, k^\N)$, and so as stated above, we achieve our contradiction, because $X$ is shown to be a finite union of sets definable in $(\N, +, k^\N)$.
\end{proof}

\section{Further Results}\label{sec:fur}

The characterization established in the previous section has consequences in the realms of both definability and decidability.
B\`es previously established in \cite{Bes97} that for $k$ and $\ell$ multiplicatively independent natural numbers, the structure $(\N,+,V_k,\ell^{\N})$ defines multiplication and hence is undecidable.
Using a theorem of the third-named author showing that under the same hypotheses $(\N,+,k^{\N},\ell^{\N})$ does not define multiplication \cite{Sch25}, we extend that result with the following corollary.

\begin{cor}
    If $k, \ell \in  \N_{>1}$ are multiplicatively independent, and $X \subseteq \N$ is $k$-automatic and $Y \subseteq \N$ is $\ell$-automatic, then the structure $(\N,+,X,Y)$ defines multiplication if and only if either $X$ is not definable in $(\N,+,k^{\N})$ or $Y$ is not definable in $(\N,+,\ell^{\N})$.
\end{cor}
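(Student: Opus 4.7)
The plan is to prove the two directions of the biconditional separately. The ``only if'' direction will follow by contrapositive from Schulz's recent theorem on $(\N,+,k^{\N},\ell^{\N})$~\cite{Sch25}. The ``if'' direction will chain Theorem~\ref{thm:main} with Fact~\ref{bes_define_kn} to reduce to B\`es's theorem that $(\N,+,V_k,\ell^{\N})$ defines multiplication when $k$ and $\ell$ are multiplicatively independent.

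For the ``only if'' direction, I would argue by contrapositive: assume that $X$ is definable in $(\N,+,k^{\N})$ and $Y$ is definable in $(\N,+,\ell^{\N})$. Then every set definable in $(\N,+,X,Y)$ is also definable in $(\N,+,k^{\N},\ell^{\N})$, since formulas using $X$ and $Y$ may be translated by substituting in the defining formulas. By the cited theorem of Schulz, the latter structure does not define multiplication, so neither does $(\N,+,X,Y)$.

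For the ``if'' direction, suppose without loss of generality that $X$ is not definable in $(\N,+,k^{\N})$. Then $X$ is not eventually periodic, since every eventually periodic set is Presburger-definable and hence definable in $(\N,+,k^{\N})$; so Theorem~\ref{thm:main} applies. The alternative that $X$ is interdefinable with $k^{\N}$ would make $X$ definable in $(\N,+,k^{\N})$, contradicting our hypothesis, so the theorem forces $(\N,+,X)$ to define all $k$-automatic sets, in particular $V_k$. In the nontrivial regime where $Y$ is also not eventually periodic, Fact~\ref{bes_define_kn} gives that $(\N,+,Y)$ defines $\ell^{\N}$. Combining these, $(\N,+,X,Y)$ defines both $V_k$ and $\ell^{\N}$, and B\`es's theorem then implies that $(\N,+,X,Y)$ defines multiplication.

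The essentially only obstacle is a small piece of bookkeeping at the boundary: if one of $X, Y$ is itself eventually periodic then it is trivially definable in the corresponding Semenov reduct, so the statement is really informative in the regime where both automatic sets are genuinely nontrivial. With that caveat, the reductions above are purely formal and follow directly from Theorem~\ref{thm:main}, Fact~\ref{bes_define_kn}, and the cited theorems of Schulz and B\`es; no new combinatorial content is needed beyond what has already been developed.
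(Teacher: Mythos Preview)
Your overall strategy matches what the paper intends: the paper states the corollary without proof, immediately after citing B\`es's theorem that $(\N,+,V_k,\ell^\N)$ defines multiplication and Schulz's theorem that $(\N,+,k^\N,\ell^\N)$ does not, so the intended argument is exactly the one you outline. Your treatment of the ``only if'' direction is correct, and in the ``if'' direction your chain Theorem~\ref{thm:main} $\Rightarrow$ $V_k$ definable, Fact~\ref{bes_define_kn} $\Rightarrow$ $\ell^{\N}$ definable, B\`es $\Rightarrow$ multiplication definable is sound whenever both $X$ and $Y$ fail to be eventually periodic.

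However, the case you dismiss as ``bookkeeping'' is not bookkeeping; it is a genuine obstruction. Suppose $Y$ is eventually periodic while $X$ is not definable in $(\N,+,k^{\N})$. The disjunctive hypothesis of the backward direction still holds (via $X$), yet $(\N,+,X,Y)$ has the same definable sets as $(\N,+,X)$, which by Theorem~\ref{thm:main} is interdefinable with $(\N,+,V_k)$. Since $(\N,+,V_k)$ is decidable, it cannot define multiplication, so the backward implication fails outright in this case. Your remark that an eventually periodic $Y$ ``is trivially definable in the corresponding Semenov reduct'' is true but beside the point: that observation bears on whether the second disjunct holds, not on whether multiplication becomes definable. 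What you have actually established is the corollary under the additional hypothesis that neither $X$ nor $Y$ is eventually periodic; given the context of the paper this is almost certainly the intended statement, but you should make that hypothesis explicit (or flag the biconditional as needing correction) rather than labeling the missing case as routine.
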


Using results of Hawthorne \cite{Haw20} relating definability in $(\Z,+,k^{\N})$ to stability of the structure $(\Z,+,X)$ where $X \subseteq \N$ is $k$-automatic, we also obtain a further characterization of definability of $V_k$ in such expansions of $(\Z,+)$.

\begin{cor}
    Suppose that $X \subseteq \N$ is $k$-automatic. Then exactly one of the following holds:
    \begin{enumerate}
        \item The structure $(\Z,+,X)$ is stable;
        \item The structures $(\Z,+,X)$ and $(\Z,<,+,k^{\N})$ define the same sets;
        \item The structure $(\Z,+,X)$ defines $V_k$.
    \end{enumerate}

    \begin{proof}
    First, we note that in \cite{Haw20}, Hawthorne shows that if $X \subseteq \N$ is $k$-automatic the structure $(\Z,+,X)$ is stable if and only if $X$ is sparse.
    In the case that $X$ is not sparse, by results of Bell, Hare, and Shallit in \cite{BHS18}, there exist natural numbers $m,d,N$ such that every natural number $n>N$ that is a multiple of $d$ can be written as the sum of at most $m$ elements of $X$.
    From this we can define $\N$ in $(\Z,+,X)$. 
    Having defined $\N$, we may now apply Theorem \ref{thm:main} to the induced structure on $\N$.
    Hence we conclude that either $(\Z,+,X)$ defines the same sets as $(\Z,<,+,k^{\N})$ in the case that $\N \setminus X$ is sparse, or else $(\Z,+,X)$ defines $V_k$.
    \end{proof}
\end{cor}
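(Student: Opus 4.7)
The plan is to break into cases according to whether $X$ is sparse, and to use two external inputs together with the paper's main dichotomy (Theorem \ref{thm:main}). The first input is Hawthorne's characterization from \cite{Haw20}: for a $k$-automatic set $X \subseteq \N$, the structure $(\Z,+,X)$ is stable if and only if $X$ is sparse. The second input is the theorem of Bell, Hare, and Shallit \cite{BHS18} stating that if $X$ is not sparse, then there exist natural numbers $d, m, N$ such that every multiple of $d$ exceeding $N$ is a sum of at most $m$ elements of $X$.

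First I would handle the sparse case. If $X$ is sparse, Hawthorne's theorem gives that $(\Z,+,X)$ is stable, which is option (1). To make the trichotomy exclusive, I would note that a stable structure cannot define an order of infinite chains, so it cannot define $V_k$ (since $(\Z,+,V_k)$ is unstable, being biinterpretable with a structure that knows base-$k$ expansions and hence order on $\N$), and similarly cannot interpret $(\Z,<,+,k^\N)$ on a nontrivial unbounded set. So (1) rules out (2) and (3).

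Next, assume $X$ is not sparse. Apply the Bell--Hare--Shallit result to extract $d, m, N$. The set of sums of at most $m$ elements of $X$ is definable in $(\Z,+,X)$, and intersecting with the arithmetic progression $d\Z + 0$ (also definable) and throwing out elements $\le N$ (a finite definable set) gives a definable subset of $\N$ that is cofinal in $\N$; translating by $-d\Z$ and using divisibility by $d$ (Presburger-definable) one recovers $\N$ itself as a definable subset of $\Z$. Hence $\N$ is $\emptyset$-definable in $(\Z,+,X)$, so the induced structure on $\N$ contains $(\N,+,X)$ as a reduct.

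Now I would invoke Theorem \ref{thm:main}: either $(\N,+,X)$ defines $V_k$, giving option (3) in $(\Z,+,X)$, or else $X$ is interdefinable with $k^\N$ over $(\N,+)$, in which case $(\Z,+,X)$ defines $k^\N$ and also defines $\N$, hence defines the natural order $<$ on $\Z$ via $x < y \iff y - x \in \N \setminus\{0\}$, so it defines everything definable in $(\Z,<,+,k^\N)$; conversely $(\Z,<,+,k^\N)$ obviously defines $X$. The main obstacle I anticipate is the last mutual-exclusion step: verifying that option (2) is genuinely distinct from option (3), i.e.\ that $(\Z,<,+,k^\N)$ does not itself define $V_k$. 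This is exactly where one invokes Schulz's theorem \cite{Sch25} cited in the preceding corollary, which is the nontrivial fact that $(\N,+,k^\N)$ does not define multiplication (or equivalently $V_k$), guaranteeing that the three options are pairwise incompatible.
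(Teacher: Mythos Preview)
Your proposal is correct and follows essentially the same line as the paper: Hawthorne's characterization for the sparse case, Bell--Hare--Shallit to define $\N$ in the non-sparse case, and then Theorem~\ref{thm:main} applied to the induced structure on $\N$. You are in fact more thorough than the paper about the mutual exclusivity of the three options; the only quibble is that the fact that $(\N,+,k^{\N})$ does not define $V_k$ already follows from Semenov's characterization (Fact~\ref{semenov}) or from NIP as in \cite{LP20}, rather than from \cite{Sch25}, which is about two multiplicatively independent bases.
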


This corollary prompts us to ask the following question.

\begin{ques}
    If $X \subseteq \Z$ is recognized by an automaton with alphabet $\Sigma = \{-,0,\ldots ,k-1\}$, does the above trichotomy hold?
\end{ques}

We note that for $X \subseteq \Z^2$, this is not the case.
For $k\in \N_{>1}$, consider $<_k:=\{(x,y)\in \Z^2: V_k(|x|)<V_k(|y|)\}$.
By results of \cite{AD19}, the structure $(\Z,+,<_k)$ is dp-minimal for all $k>1$,
so $V_k$ cannot be defined in this structure.
However, the classic ordering $<$ of the integers is also not definable in this structure, yet the structure is nevertheless unstable.

It is also natural to ask whether Theorem \ref{thm:main} holds when we consider a $k$-automatic set $X\subseteq \N^m$ with $m>1$.
One of the obstacles in proving the higher-arity analog is the fact that Semenov only gives a nice characterization of one dimensional definable sets in \cite{Sem80}.
This prompts the following question.
\begin{ques}
Suppose $X\subseteq \N^m$, with $m>1$, is $k$-automatic.
If the unary sets definable in $(\N,+,X)$ are precisely the unary sets in $(\N,+,k^{\N})$, is $X$ necessarily definable in $(\N,+,k^{\N})$?
\end{ques}

As illustrated by the structure $(\Z,+,<_k)$, this question has a negative answer when we replace $X \subseteq \N^m$ with $X \subseteq \Z^m$ (for $m>1$).

It is also quite natural to ask whether we can effectively decide whether a given $k$-automatic set $X \subseteq \N$ is definable in $(\N,+,k^{\N})$.
As demonstrated by our results, this boils down to deciding whether there exists some $N \in \N$
 after which $\{x \in X:x>N\}$ is periodic.
 Certainly we can write down an algorithm that enumerates all $X$ definable in $(\N,+,k^{\N})$, but deciding whether $X$ is definable in $(\N,+,k^{\N})$ for an arbitrary $X$ definable in $(\N,+,V_k)$ seems strictly trickier.
\bibliographystyle{plain}
\bibliography{biblio}

\end{document}